\renewcommand{\@algocf@capt@plain}{above}
\newtheorem{theorem}{Theorem}[section]
\newtheorem{corollary}[theorem]{Corollary}
\newtheorem{lemma}[theorem]{Lemma}
\newtheorem{proposition}[theorem]{Proposition}
\theoremstyle{definition}
\newtheorem{conjecture}[theorem]{Conjecture}
\newtheorem{question}[theorem]{Question}
\newtheorem{remark}[theorem]{Remark}
\numberwithin{equation}{section}
\newcommand{\GL}{\mathrm{GL}}
\newcommand{\SL}{\mathrm{SL}}
\renewcommand{\O}{\mathrm{O}}
\newcommand{\Soc}{\mathrm{Soc}}
\newcommand{\POm}{P \Omega}
\newcommand{\Alt}{Alt}
\newcommand{\Sym}{Sym}
\newcommand{\Aut}{\mathrm{Aut}}
\newcommand{\Out}{\mathrm{Out}}
\newcommand{\rk}{\mathrm{rk}}
\newcommand{\Fbb}{\mathbb{F}}
\newcommand{\Dmc}{\mathcal{D}}
\newcommand{\Bmc}{\mathcal{B}}
\newcommand{\Pmc}{\mathcal{P}}
\newcommand{\Hmc}{\mathcal{H}}
  \def\a{\alpha}
\def\d{\delta}
  \def\l{\lambda}
  \def\la{\langle}
  \def\ra{\rangle}
    \def\O{\Omega}
\newcommand{\e}{\epsilon}
\renewcommand{\leq}{\leqslant}
\renewcommand{\geq}{\geqslant}
\newcommand{\imod}[1]{\allowbreak\mkern4mu({\operator@font mod}\,\,#1)}
\begin{document}
 \title{Finite exceptional groups of Lie type and symmetric designs}
 \author[S.H. Alavi]{Seyed Hassan Alavi}%
 \thanks{Corresponding author: S.H. Alavi}
 \address{Seyed Hassan Alavi, Department of Mathematics, Faculty of Science, Bu-Ali Sina University, Hamedan, Iran.
 }%
 \address{Seyed Hassan Alavi, School of Mathematics, Institute for Research in Fundamental Sciences (IPM), P.O. Box: 19395-5746, Tehran, Iran}
 \email{alavi.s.hassan@basu.ac.ir and  alavi.s.hassan@gmail.com (G-mail is preferred)}
 \author[M. Bayat]{Mohsen Bayat}%
 \address{Mohsen Bayat, Department of Mathematics, Faculty of Science, Bu-Ali Sina University, Hamedan, Iran.}%
 \email{m.bayat@sci.basu.ac.ir}
 \author[A. Daneshkhah]{Asharf Daneshkhah}%
 \address{Asharf Daneshkhah, Department of Mathematics, Faculty of Science, Bu-Ali Sina University, Hamedan, Iran.
 }%
  \email{adanesh@basu.ac.ir}

 \subjclass[]{05B05;  05B25; 20B25; 20D05}
 \keywords{Finite simple exceptional group; large subgroup;  automorphism group; point-primitive; flag-transitive; symmetric design;}

\maketitle

\begin{abstract}
In this article, we study symmetric $(v, k, \lambda)$ designs admitting a flag-transitive and point-primitive automorphism group $G$ whose socle $X$ is a finite simple exceptional group of Lie type. We prove a reduction theorem, severely restricting the possible parameters of such designs.
We also prove that the parameters $k$ and $\lambda$ are not coprime, and neither of these parameters can be prime. Moreover, if $\lambda$ is at most $100$, we show that there are  two such parameters sets, namely, $(351,126,45)$ and $(378,117,36)$ for $G=X=G_{2}(3)$.
Our analysis depends heavily on detailed information about actions of finite exceptional almost simple groups of Lie type on the cosets of their large maximal subgroups.  In particular, properties derived in the paper about large subgroups and the subdegrees of such actions may be of independent interest.
\end{abstract}

\section{Introduction}\label{sec:intro}

A \emph{symmetric $(v,k,\lambda)$ design} is an incidence structure $\Dmc=(\Pmc,\Bmc)$  consisting of a set $\Pmc$ of $v$ \emph{points} and a set $\Bmc$ of $v$ \emph{blocks} such that every point is incident with exactly $k$ blocks, and every pair of blocks is incident with exactly $\lambda$ points. If $2<k<v-1$, then  $\Dmc$ is called a \emph{nontrivial} symmetric design.
A \emph{flag} of $\Dmc$ is an incident pair $(\alpha,B)$, where $\alpha$ and $B$ are a point and a block of $\Dmc$, respectively. An \emph{automorphism} of a symmetric design $\Dmc$ is a permutation of the points permuting the blocks and preserving the incidence relation. An automorphism group $G$ of $\Dmc$ is called \emph{flag-transitive} if it is transitive on the set of flags of $\Dmc$. If $G$ acts primitively on the point set $\Pmc$, then $G$ is said to be \emph{point-primitive}. We also adopt the standard Lie notation for groups of Lie type as in \cite{b:Atlas,b:KL-90}, for more definitions and notation see Subsection \ref{sec:defn} below. 

A series of interesting results on flag-transitive automorphism groups of symmetric designs suggests investigating symmetric designs admitting point-primitive automorphism groups whose socle $X$ is a non-abelian finite simple group, see for example \cite{a:ADO-2019,a:Zhou-lam100,a:Zieschang-88}. In this direction, possible symmetric designs (up to isomorphism) have  been studied when  $X$ is $A_{n}^{\e}(q)$ with $n\leq 4$ \cite{a:ABD-PSL3,a:ABD-PSL4,a:ABD-PSL2,a:ABDZ-PSU4,a:D-PSU5,a:DZ-PSU3}, ${}^2\!B_2(q)$, ${}^2\!G_2(q)$, ${}^2\!F_4(q)$, ${}^3\!D_4(q)$ \cite{a:Zhou-exp} or a sporadic simple group  \cite{a:Zhou-sym-sporadic}. This paper is devoted to studying symmetric designs admitting a flag-transitive and point-primitive almost simple automorphism group $G$ whose socle is a finite simple exceptional group of Lie type, and our main result is the following theorem.

\begin{theorem}\label{thm:main}
    Let $\Dmc=(\Pmc,\Bmc)$ be a nontrivial symmetric $(v, k, \lambda)$ design with $\lambda\geq 1$ admitting  a flag-transitive and point-primitive automorphism group $G$. Let also $H:=G_{\alpha}$ with $\alpha\in \Pmc$. If $G$ is an almost simple group with socle $X$ a finite simple exceptional group of Lie type, then one of the following holds:
    \begin{enumerate}[{\rm (a)}]
        \item $X=G_{2}(q)$, $H\cap X=\,^{\hat{}}[q^5]:GL_{2}(q)$ is a parabolic subgroup, $v=(q^{6}-1)/(q-1)$, $k=q^5$ and $\lambda=q^4(q-1)$;
        \item $X=G_{2}(q)$, $H\cap X=SL_{3}^{\e}(q):2$ with $\e=\pm$, $v=q^{3}(q^{3}+\e1)/2$,  $k=q^{3}(q^{3}-\e1)/6$,  and $\lambda=q^{3}(q^{3}-\e3)/18$, where $q=3^{a}\geq 3$;
        \item $X=E_{6}(q)$ with $q=p^{a}$,  $H\cap X=\,^{\hat{}}[q^{16}]:D_{5}(q)\cdot (q-1)$ is a parabolic subgroup, and $v=(q^8+q^4+1)(q^9-1)/(q-1)$, $k=m\cdot w(q)+1$ and  $\lambda=q^{-1}(q^{4}+1)^{-1}(m^{2}\cdot w(q)+m)$, where $w(q)=q^{3}+\sum_{i=0}^{11} q^{i}$ and $m<q(q^{4}+1)$.
    \end{enumerate}
\end{theorem}

In Section \ref{sec:example}, we provide some examples of symmetric designs whose automorphism groups are related to finite simple exceptional group of Lie types. In particular, for small $q$,  we know that the designs in part (a) and part (b) of Theorem~\ref{thm:main} do exist and these designs are flag-transitive and point-primitive, see Table~\ref{tbl:cor-main}. Although our further computational evidence  shows that the designs with parameters set in part (c) of Theorem~\ref{thm:main} do not exist, our method introduced in Subsection \ref{sec:method} does not work to rule out this case, and so based on our further observations, we would like to propose the following conjecture:

\begin{conjecture}\label{conj:main}
    If a nontrivial symmetric design $\Dmc$ admits a flag-transitive and point-primitive automorphism group whose socle $X$ is a finite simple exceptional group of Lie type with point-stabiliser $H$, then $\Dmc$ has parameters set as in part (a) or part (b) of Theorem~\ref{thm:main}, $X=G_{2}(q)$ and $H\cap X$ is either $SL_{3}^{\e}(q):2$ with $q=3^{a}\geq 3$ and $\e=\pm$, or $^{\hat{}}[q^5]:GL_{2}(q)$.
\end{conjecture}

A historical problem of determining block designs with their replication numbers $r$ being coprime to $\lambda$ and admitting flag-transitive automorphism groups $G$ reduces to the case where $G$ is a primitive group of almost simple or affine type \cite{a:Zieschang-88}. As an important contribution to this problem, we apply Theorem~\ref{thm:main} and prove in Corollary~\ref{cor:main-1} below that an almost simple automorphism group $G$ with socle a finite simple exceptional group of Lie type does not give rise to a symmetric $(v, k, \lambda)$ design with $\gcd(k,\lambda)=1$. It is worth noting that at the present time we know all possible flag-transitive automorphism groups $G$ of (symmetric) designs with $\gcd(r,\lambda)=1$  excluding the case where $G\leq A\varGamma L_{1}(q)$, see \cite{a:ADM-AS-CP,a:ABD-Un-CP,a:Biliotti-CP-sym-affine,a:Biliotti-CP-nonsol-HA,a:Biliotti-CP-sol-HA}. We moreover prove in Corollary~\ref{cor:main-1} that symmetric designs with $k$ or $\lambda$ prime cannot admit a flag-transitive and point-primitive automorphism groups whose socle is a finite simple exceptional group of Lie type. In general, flag-transitive and point-primitive almost simple automorphism groups of (symmetric) designs with prime replication numbers  or prime $\lambda$ have  been studied, see \cite{a:ABCD-PrimeRep,a:ABD-PrimeLam}.

\begin{corollary}\label{cor:main-1}
    If $\Dmc$ is a nontrivial symmetric $(v, k, \lambda)$ design admitting a flag-transitive and point-primitive automorphism group whose socle is a finite simple exceptional group of Lie type, then $k$ and $\lambda$ are not coprime. Moreover, neither $k$, nor $\lambda$ is prime.
\end{corollary}

We note here that if the socle $X$ of $G$  is a finite classical simple group, then the parameters $k$ and $\lambda$ can be coprime, for example, the Fano plane is a symmetric $(7,3,1)$ design with flag-transitive and point-primitive automorphism group $A_{1}(7)$, and the simple groups $C_{2}(3)$ and $A_{3}(4)$ are flag-transitive and point-primitive automorphism groups of symmetric designs with parameters $(40,13,4)$ and $(85,21,5)$, respectively, see \cite{a:Braic-2500-nopower}.

Symmetric designs with $\lambda$ small have been of most interest. Kantor \cite{a:Kantor-87-Odd} classifies flag-transitive symmetric $(v, k, 1)$ designs (projective planes).  Regueiro~\cite{t:Regueiro}, Zhou and Dong \cite{a:Zhou-lam3-affine} give a complete classification of biplanes ($\lambda=2$) and triplanes ($\lambda=3$) with flag-transitive automorphism groups apart from those admitting a $1$-dimensional affine automorphism group, see also \cite{a:Regueiro-reduction,a:Regueiro-Exp,a:Zhou-lam3-alt,a:Zhou-lam3-excep} and therein references. Note that for $\lambda\leq 3$, there is no flag-transitive and point-primitive nontrivial symmetric design $\Dmc$ whose automorphism group is an almost simple group with socle a finite simple exceptional group of Lie type \cite{a:Regueiro-Exp,a:Saxl2002,a:Zhou-lam3-excep}. As another consequence of Theorem~\ref{thm:main}, we show that there are only two unique such designs for $\lambda\leq 100$.

\begin{corollary}\label{cor:main-2}
    Let $\Dmc$ be a nontrivial symmetric $(v, k, \lambda)$ design with $\lambda \leq 100$. Then $G$ is a flag-transitive and point-primitive automorphism group of $\Dmc$ with socle $X$ a finite simple exceptional group of Lie type if and only if $G=X=G_{2}(3)$, $H\cap X=SL_{3}^{\e}(3):2$ for $\e=\pm$ and $\Dmc$ exists and is as in line $3$ or $4$ of {\rm Table \ref{tbl:cor-main}} and has parameters $(351,126,45)$ or $(378,117,36)$ respectively for $\e=-$ or $\e=+$.
\end{corollary}

In addition to some interesting constructions and examples given in Section~\ref{sec:example}, the designs associated to $G_2(q)$ for $q=2,3,4$ in Table~\ref{tbl:cor-main} have a beautiful geometric description and can be linked to a Cayley algebra. For example, in the case where $G=G_{2}(3)$, the point-stabilisers $SL_{3}^{\e}(q):2$ are stabilisers of plus or minus points of the ``mod 3 Cayley algebra'' and in each case, the point-stabilisers and block-stabilisers are interchanged by an outer automorphism of $G_2(3)$ implying that the designs are self-dual. Therefore, we have the following question:

\begin{question}\label{ques:main}
    Suppose that $\Dmc$ is a symmetric design with a point-primitive and flag-transitive subgroup $G$ of automorphisms which is an almost simple exceptional group of Lie type. Is it true that $G$ is of type $G_2$ and $\Dmc$ is some kind of design on a Cayley algebra?
\end{question}

In the process of proving Theorem \ref{thm:main}, we also prove the following result, Theorem \ref{thm:large-ex}, on large maximal subgroups of almost simple groups with socle a finite simple exceptional group of Lie type, which we believe is of independent interest. In fact, the point-stabiliser $H$ of a flag-transitive automorphism group $G$ of a rank $2$ geometry must be a large subgroup, that is to say, $|G|\leq |H|^{3}$. Moreover, if $G$ is point-primitive, then the subgroup $H$ is also a maximal subgroup of $G$. Alavi and Burness \cite{a:AB-Large-15} study the large subgroups of finite simple groups. We here study large maximal subgroups of almost simple groups $G$ whose socle $X$ is a finite simple exceptional group of Lie type, and then we apply this result to prove our main Theorem \ref{thm:main}.

\begin{theorem}\label{thm:large-ex}
	Let $G$ be a finite almost simple group whose socle $X$ is a finite simple exceptional group of Lie type, and let $H$ be a maximal subgroup of $G$ not containing $X$. If $H$ is a large subgroup of $G$, then $H$ is either parabolic, or one of the subgroups listed in {\rm Table~\ref{tbl:large-exc-nonpar}}.
\end{theorem}

\begin{table}
    \centering
    \caption{Large maximal non-parabolic subgroups $H$ of almost simple groups $G$ with  socle $X$ a finite simple exceptional group of Lie type.}\label{tbl:large-exc-nonpar}
    \resizebox{\textwidth}{!}{
        \begin{tabular}{lll}
            \hline\noalign{\smallskip}
            $X$ &
            $H\cap X$ or type of $H$ &
            Conditions \\
            \noalign{\smallskip}\hline\noalign{\smallskip}
            ${}^2\!B_2(q)$ ($q=2^{2n+1} \geq 8$)& $(q+\sqrt{2q}+1){:}4$& $q=8, 32$ \\
            & ${}^2\!B_2(q^{1/3})$ & $q > 8$, $3\mid 2n+1$\\
            ${}^2\!G_2(q)$ ($q=3^{2n+1} \geq 27$) & $A_{1}(q)$\\
            & ${}^2\!G_2(q^{1/3})$ & $3\mid 2n+1$ \\
            %
            ${}^3\!D_4(q)$ & $A_{1}(q^3)A_{1}(q)$, $(q^2+\e 1q+1)A_{2}^{\e}(q)$, $G_2(q)$ & $\e = \pm$\\
            & ${}^3\!D_4(q^{1/2})$  & $q$ square \\
            & $7^2: SL_{2}(3)$ & $q=2$ \\
            ${}^2\!F_4(q)$ ($q=2^{2n+1} \geq 8$) & ${}^2\!B_{2}(q)\wr 2$, $ B_{2}(q):2$, $ {}^2F_4(q^{1/3})$ &  \\
            & $SU_{3}(q):2$, $ PGU_{3}(q):2$ & $q=8$ \\
            & $A_{2}(3){:}2$, $ A_{1}(25)$, $ \Alt_{6}\cdot 2^{2}$, $5^{2}{:}4 \Alt_{4}$ & $q=2$ \\
            $G_2(q)$ & $A_{2}^{\e}(q)$, $A_{1}(q)^{2}$, $G_2(q^{1/r})$ & $r=2,3$ \\
            & ${}^2\!G_2(q)$ & $q=3^{a}$, $a$~is~odd \\
            & $G_2(2)$ & $q=5,7$ \\
            & $A_{1}(13)$, $J_{2}$ & $q =4$ \\
            & $J_{1}$ & $q=11$ \\
            & $2^{3}.A_{2}(2)$ & $q=3, 5$ \\
            $F_4(q)$ & $B_4(q)$, $D_4(q)$, ${}^3\!D_4(q)$ & \\
            & $F_4(q^{1/r})$ & $r=2,3$ \\
            & $A_1(q)C_3(q)$ & $p \ne 2$ \\
            & $C_4(q)$, $C_{2}(q^{2})$, $C_{2}(q)^{2}$ & $p = 2$ \\
            & ${}^2\!F_4(q)$ & $q=2^{2n+1} \geq 2$ \\
            & ${}^3\!D_4(2)$ & $q=3$ \\
            & $Alt_{9-10}$, $A_3(3)$, $J_{2}$& $q = 2$ \\
            & $A_{1}(q)G_{2}(q)$ & $q>3$ odd \\
            & $\Sym_6 \wr \Sym_2$, $F_{4}(2)$& $q = 2$\\
            $E_6^{\e}(q)$ & $A_1(q)A_5^{\e}(q)$, $F_4(q)$ &  \\
            & $(q-\e)D_5^{\e}(q)$ & $\e=-$ \\
            & $C_4(q)$ & $p \neq 2$ \\
            & $E_6^{\pm}(q^{1/2})$ & $\e=+$ \\
            & $E_6^{\e}(q^{1/3})$ &  \\
            & $(q-\e)^2.D_4(q)$ & $(\e,q) \neq (+,2)$ \\
            & $(q^2+\e q+1).{}^3\!D_4(q)$ & $(\e,q) \neq (-,2)$ \\
            & $J_{3}$, $Alt_{12}$, $B_{3}(3)$, $Fi_{22}$ & $(\e,q)=(-,2)$ \\
            %
            %
            %
            $E_7(q)$ & $(q-\e)E_{6}^{\e}(q)$, $A_1(q)D_6(q)$, $A_7^{\e}(q)$, $A_1(q)F_4(q)$, $E_7(q^{1/r})$ & $\e = \pm$ and $r=2,3$ \\
            &$Fi_{22}$& $q=2$\\
            %
            $E_8(q)$ & $A_1(q)E_7(q)$, $D_8(q)$, $A_2^{\e}(q)E_6^{\e}(q)$, $E_8(q^{1/r})$ & $\e=\pm$ and $r=2,3$ \\
            %
            \noalign{\smallskip}\hline
        \end{tabular}
    }
\end{table}

\subsection{Outline of proofs}\label{sec:out}

We prove Theorem~\ref{thm:main} and Corollaries \ref{cor:main-1}-\ref{cor:main-2} in Section~\ref{sec:proof}. The symmetric designs with $\lambda\leq 3$ and automorphism groups satisfying the conditions in Theorem~\ref{thm:main} have been studied in \cite{a:Regueiro-Exp,a:Saxl2002,a:Zhou-lam3-excep}, and so we may assume that $\lambda\geq 4$. Since the group $G$ is point-primitive, the point-stabiliser $H$ is maximal in $G$, and flag-transitivity implies that $H$ is large, see Corollary~\ref{cor:large}. We now apply Theorem~\ref{thm:large-ex} and analyse each possible case.  In order to avoid repetition, we describe our method in details in Subsection~\ref{sec:method} and the required information are given in Table~\ref{tbl:rem}. However, in some cases, in addition to applying our explained method in Subsection~\ref{sec:method}, we need some extra argument which will be separately discussed. As a key tool, we frequently apply Lemma~\ref{lem:six}. We also use several important results proved in Section \ref{sec:subdegs} on subdegrees of groups under discussion acting on the right cosets of their maximal subgroups, see Theorem \ref{thm:martin} and Proposition~\ref{prop:g2sl3}.
We moreover use \textsf{GAP} \cite{GAP4}, and apply Lemmas~\ref{lem:comp} and Lemma~\ref{lem:comp}  and Remark~\ref{rem:alg} for computational arguments.

In order to prove Theorem~\ref{thm:large-ex} in Section~\ref{sec:large}, we use the same method as in \cite{a:AB-Large-15}. Note that \cite[Theorem 7]{a:AB-Large-15} allows us only to find large maximal subgroups $H$ of $G$ satisfying $|H\cap X|^{3}< |X|\leq b^{2}\cdot |H\cap X|^{3}$, where $b$ is a divisor of $|\Out(X)|$, see Remark~\ref{rem:large}.
The maximal subgroups of the low-rank groups have been determined, so the proof in these cases is an easy exercise. For the remaining groups,
our starting point here is a reduction theorem of Liebeck and Seitz \cite[Theorem 2]{a:LS-ex-1990}, which essentially allows us to reduce to the case where $H$ is almost simple, with socle $H_0$, say. At this point there are two possibilities, which we consider separately. Write ${\rm Lie}(p)$ for the set of simple groups of Lie type in characteristic $p$, and suppose $G \in {\rm Lie}(p)$ has untwisted Lie rank $n$. If $H_0 \in {\rm Lie}(p)$ has untwisted Lie rank $r$, then the possibilities with $r>n/2$ are given by Liebeck and Seitz \cite{a:LS-MaxLRank05}, but more work is needed to determine the large subgroups with $r \leq n/2$ (an upper bound on $|H|$ given in \cite[Theorem 1.2]{a:LieSha-Probablity-1995} is useful here). Finally, if $H_0 \not\in {\rm Lie}(p)$, then the possibilities for $H$ are determined in \cite{a:LS1999}, and it is straightforward to read off the large examples.

\subsection{Definitions and notation}\label{sec:defn}

Throughout this paper, all groups and incidence structures are finite. 
We here write $\Alt_{n}$ and $\Sym_{n}$ for the alternating group and the symmetric group on $n$ letters, respectively, and we denote by $[n]$ a  group of order $n$. We also adopt the standard Lie notation for groups of Lie type,
for example, we write $A_{n-1}(q)$ and $A_{n-1}^{-}(q)$ in place of $PSL_{n}(q)$ and $PSU_{n}(q)$, respectively, $D_n^{-}(q)$ instead of  $P\Omega_{2n}^{-}(q)$, and $E_6^{-}(q)$ for ${}^2\!E_6(q)$.
We may also assume $q>2$ if $G=G_2(q)$ since $G_{2}(2)$ is not simple and $G_2(2)' \cong A^{-}_{2}(3)$. Moreover, we view the Tits group ${}^2\!F_4(2)'$ as a sporadic group. A group $G$ is said to be \emph{almost simple} with socle $X$ if $X\unlhd G\leq \Aut(X)$ where $X$ is a nonabelian simple group.

Recall that a \emph{symmetric $(v,k,\lambda)$ design} is an incidence structure $\Dmc=(\Pmc,\Bmc)$  consisting of a set $\Pmc$ of $v$ \emph{points} and a set $\Bmc$ of $v$ \emph{blocks} such that every point is incident with exactly $k$ blocks, and every pair of blocks is incident with exactly $\lambda$ points. If $2<k<v-1$, then  $\Dmc$ is called a \emph{nontrivial} symmetric design.
A \emph{flag} of $\Dmc$ is an incident pair $(\alpha,B)$, where $\alpha$ and $B$ are a point and a block of $\Dmc$, respectively. An \emph{automorphism} of a symmetric design $\Dmc$ is a permutation of the points permuting the blocks and preserving the incidence relation. An automorphism group $G$ of $\Dmc$ is called \emph{flag-transitive} if it is transitive on the set of flags of $\Dmc$. If $G$ acts primitively on the point set $\Pmc$, then $G$ is said to be \emph{point-primitive}.  Further notation and definitions in both design theory and group theory are standard and can be found, for example, in \cite{b:Atlas,b:Dixon,b:KL-90,b:lander}.

\section{Examples and comments}\label{sec:example}

In this section, we provide some well-known examples of symmetric designs admitting point-primitive automorphism groups.  We also make some relevant comments on Theorem~\ref{thm:main} and Corollary~\ref{cor:main-2}.

In Table~\ref{tbl:cor-main}, we give some examples of the symmetric designs which arise from the study of primitive permutation groups of small degrees, see \cite{a:Braic-2500-nopower,a:Dempwolff2001}. Although the group $G=G_{2}(2)$ (lines 1-2) is not a simple group, it is point-primitive  automorphism group of symmetric $(36,15,6)$ design which is one of the Menon designs. This design is antiflag-transitive and its complement with parameters $(36,21,12)$ is flag-transitive. The symmetric designs admitting almost simple automorphism group with socle $G_{2}(3)$ and $G_{2}(4)$ (lines 3-6) can be constructed in the following general manner. All designs in Table~\ref{tbl:cor-main} do exist and are flag-transitive.

\begin{table}
	\centering
	\small
	\caption{The parameters when $X=G_{2}(q)$ for  $q=2,3,4$.}\label{tbl:cor-main}
	\begin{tabular}{p{.5cm}llllllp{3.6cm}l}
		\hline
		\multicolumn{1}{l}{Line} &
		\multicolumn{1}{l}{$v$} &
		\multicolumn{1}{l}{$k$} &
		\multicolumn{1}{l}{$\lambda$} &
		\multicolumn{1}{l}{$G$} &
		\multicolumn{1}{l}{$X$} &
		\multicolumn{1}{l}{$H\cap X$} &
		Comments &
		References \\ \hline
		\multicolumn{1}{c}{$1$} &
		$36$ &
		$15$ &
		$6$ &
		$G_{2}(2)$ &
		$G_{2}(2)'$ &
		$SL_{3}(2)$&
		Menon design &
		\cite{a:Braic-2500-nopower,a:Dempwolff2001} \\
		\multicolumn{1}{c}{$2$} &
		$63$ &
		$32$ &
		$16$ &
		$G_{2}(2)$ &
		$G_{2}(2)'$ &
		$4{:}S_{4}$&
		Menon design &
		\cite{a:Braic-2500-nopower} \\
		\multicolumn{1}{c}{$3$} &
		$351$ &
		$126$ &
		$45$ &
		$G_{2}(3)$ &
		$G_{2}(3)$ &
		$\SL_{3}^{-}(3){:}2$  &
		Orthogonal design &
		\cite{a:Braic-2500-nopower,a:Dempwolff2001} \\
		\multicolumn{1}{c}{$4$} &
		$378$ &
		$117$ &
		$36$ &
		$G_{2}(3)$ &
		$G_{2}(3)$ &
		$\SL_{3}(3){:}2$ &
		Orthogonal design &
		\cite{a:Braic-2500-nopower,a:Dempwolff2001} \\
		\multicolumn{1}{c}{$5$} &
		$1365$ &
		$1024$ &
		$768$ &
		$G_{2}(4)$ &
		$G_{2}(4)$ &
		$2^{2+8}{:}GL_2(4)$ &
		Complement of $PG_5(4)$ &
		\cite{a:Braic-2500-nopower} \\
		\multicolumn{1}{c}{$6$} &
		$1365$ &
		$1024$ &
		$768$ &
		$G_{2}(4){:}2$ &
		$G_{2}(4)$ &
		$2^{2+8}{:}GL_2(4){:}2$ &
		Complement of $PG_5(4)$ &
		\cite{a:Braic-2500-nopower} \\
		\hline
		{Note:} &\multicolumn{8}{p{13cm}}{\ The last column references a construction of the corresponding design. The group $G_2(2)$ is not simple.}
	\end{tabular}
\end{table}

The symmetric designs with parameters set in Theorem~\ref{thm:main}(a) are the complements of symmetric designs with parameters set $((q^{6}-1)/(q-1),(q^{5}-1)/(q-1),(q^{4}-1)/(q-1))$ for $q=p^{a}$ and $p\neq 3$, which is the parameters set of the well-known symmetric designs $\Dmc(\Hmc(q)^{\ast})$ arose from generalized hexagons, see \cite{a:Dempwolff-G2}. A generalized hexagon is a bipartite graph $\Hmc$ of diameter $6$ and girth $12$. We say that $\Hmc$ is of order $(s,t)$ if all vertices of one partition class are of valency $s+1$, and vertices of the other partition class have valency $t+1$. Let $\Hmc=\Hmc(q)$ be a generalized hexagon of order $(q,q)$. Then $\Dmc(\Hmc)$ is a symmetric $((q^{6}-1)/(q-1),(q^{5}-1)/(q-1),(q^{4}-1)/(q-1))$ design with one partition class of vertices of $\Hmc$ as point set $\Pmc$, and blocks of the form $\alpha^{\perp}=\{\beta \in \Pmc \mid d(\alpha,\beta)\leq 4\}$ for $\alpha\in \Pmc$. The only known generalized hexagons of order $(q,q)$ are $\Hmc(q)$ associated with the Chevalley group $G_{2}(q)$ and its dual hexagon $\Hmc(q)^{\ast}$.
If $q$ is odd, then $\Dmc(\Hmc(q))$ is isomorphic to the orthogonal symmetric design of Higman with $d=5$, and if $q$ is even, then $\Dmc(\Hmc(q))$ is isomorphic to $PG(5,q)$ for $q=3^{a}$ and we have $\Dmc(\Hmc(3^{a})^{\ast})=\Dmc(\Hmc(3^{a}))$. For $q=2$ and $4$, we have the symmetric designs with parameters $(63,31,15)$ and $(1365,341,85)$ and rank $4$ antiflag-transitive point-primitive automorphism group $\Aut(G_{2}(q))$ \cite{a:Braic-2500-nopower,a:Dempwolff-G2}. The corresponding complements of these symmetric designs with parameters $(63,32,16)$ and $(1365,1024,768)$ are flag-transitive and point-primitive. These designs arise from Theorem~\ref{thm:main}(a).

The symmetric designs with parameters $v=3^{t}(3^{t}+\e1)/2$, $k=3^{t-1}(3^{t}-\e1)/2$ and $\lambda=3^{t-1}(3^{t-1}-1)/2$ for $t>1$ and $\e=\pm$ can be related to the designs in Theorem \ref{thm:main}(b).  These designs arise from a nondegenerate orthogonal space of dimension $2t+1$ over a finite field $\Fbb_{3}$ with discriminant $(-1)^{t}$. Two symmetric designs with parameters $(351,126,45)$ and $(378,117,36)$ have been constructed in this way respectively for $(t,\e)=(3,-)$ and $(3,+)$. These designs admit a flag-transitive automorphism group $G_{2}(3)$ of rank $3$ and $4$, respectively, see \cite{a:Braic-2500-nopower}.

\section{Preliminaries}\label{sec:pre}

In this section, we state some useful facts in both design theory and group theory. The first lemme  is an elementary result on subgroups of almost simple groups.

\begin{lemma}\label{lem:New}{\rm \cite[Lemma 2.2]{a:ABD-PSL3}}
    Let $G$  be an almost simple group with socle $X$, and let $H$ be maximal in $G$ not containing $X$. Then $G=HX$, and $|H|$ divides $|\Out(X)|\cdot |H\cap X|$.
\end{lemma}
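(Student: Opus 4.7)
The plan is to exploit the normality of the socle $X$ in $G$ together with the standard embedding $G/X\hookrightarrow \Out(X)$; both parts reduce to elementary subgroup-index arithmetic.

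For part (a), since $X$ is the socle of $G$ it is normal in $G$, so $HX$ is a subgroup of $G$ containing $H$. By maximality of $H$ we have either $HX=H$ or $HX=G$; the first alternative forces $X\leq H$, which is excluded by hypothesis. Therefore $G=HX$.

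For part (b), I would apply the product formula $|G|=|HX|=|H|\cdot|X|/|H\cap X|$ obtained from (a), which rearranges to
\[
\frac{|H|}{|H\cap X|}=\frac{|G|}{|X|}=|G/X|.
\]
Identifying $X$ with $\Inn(X)\leq \Aut(X)$ (permitted since $X$ is nonabelian simple and $G\leq \Aut(X)$), the quotient $G/X$ embeds into $\Aut(X)/\Inn(X)=\Out(X)$, so $|G/X|$ divides $|\Out(X)|$. Combining the two displays gives $|H|/|H\cap X|$ divides $|\Out(X)|$, which is exactly the assertion that $|H|$ divides $|\Out(X)|\cdot|X\cap H|$.

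There is no real obstacle in this argument; it is a direct consequence of the almost-simple hypothesis and the second isomorphism theorem, and the only thing one has to be careful about is invoking the fact that an almost simple group $G$ with socle $X$ admits the standard embedding $G\hookrightarrow \Aut(X)$, which is built into the definition used in the paper.
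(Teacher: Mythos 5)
Your proof is correct and follows essentially the same route as the paper, which (in the paragraph preceding the lemma and in the cited reference) argues exactly as you do: maximality of $H$ together with $X\not\leq H$ and normality of $X$ gives $G=HX$, and then the product formula plus the identification of $X$ with $\Inn(X)$ shows $|H|/|H\cap X|=|G|/|X|$ divides $|\Out(X)|$. No gaps.
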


\begin{lemma}\label{lem:Tits}
    Suppose that $\Dmc$ is a symmetric $(v,k,\lambda)$ design admitting a flag-transitive and point-primitive almost simple automorphism group $G$ with socle $X$ of Lie type in characteristic $p$. Suppose also that the point-stabiliser $G_{\alpha}$, not containing $X$, is not a parabolic subgroup of $G$. Then $\gcd(p,v-1)=1$.
\end{lemma}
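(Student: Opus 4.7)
The plan is to argue by contraposition: assume $p \mid v-1$ and show that the point-stabiliser $H := G_\alpha$ must then be a parabolic subgroup of $G$, contradicting the hypothesis. First, $p \mid v-1$ gives $p \nmid v$. Since $G$ is point-primitive, $H$ is a maximal subgroup of $G$ not containing $X$, and by Lemma~\ref{lem:New}(a) we have $G = HX$; hence
\[
v = [G:H] = [X : X \cap H],
\]
so $p \nmid [X : X \cap H]$. Consequently $X \cap H$ contains a full Sylow $p$-subgroup of $X$.

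The key input is then Tits' Lemma, a standard consequence of the Borel--Tits theorem for finite groups of Lie type in characteristic $p$: any proper subgroup of $X$ whose order is divisible by $|X|_p$ is contained in a proper parabolic subgroup of $X$. Applied to $X \cap H$, this yields a proper parabolic subgroup $P$ of $X$ with $X \cap H \leq P$.

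To conclude, I would upgrade this to the statement that $H$ itself is parabolic in $G$. The point is that $H$ normalises its normal subgroup $X \cap H$, and among the parabolic subgroups of $X$ above $X \cap H$ (which all share a common Sylow $p$-subgroup, and hence all lie in the overgroup lattice of a fixed Borel of $X$) there is a distinguished smallest one, say $P_0$; its uniqueness under the $H$-conjugation action forces $H \leq N_G(P_0)$. Since $N_G(P_0)$ is a proper subgroup of $G$ and $H$ is maximal in $G$, we get $H = N_G(P_0)$, which is by definition a parabolic subgroup of $G$, contradicting the hypothesis.

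The main obstacle, and the step requiring the most care, is the final bridge from \emph{``$X \cap H \leq P$ inside $X$''} to \emph{``$H$ is parabolic in $G$''}, since this translation relies on the standard correspondence between parabolic subgroups of $X$ and those of its almost simple overgroup $G$, together with an $H$-invariance argument for the distinguished parabolic $P_0$. The Sylow-theoretic reduction and the invocation of Tits' Lemma itself are, by comparison, formal.
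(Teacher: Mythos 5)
Your proof is correct and rests on exactly the same key input as the paper, namely Tits' Lemma \cite[1.6]{a:tits}: the paper applies it in one line to the maximal non-parabolic subgroup $G_{\alpha}$ to conclude $p\mid |G:G_{\alpha}|=v$, hence $\gcd(p,v-1)=1$. You run the contrapositive instead and spell out the $X$-to-$G$ bookkeeping (that $X\cap H$ contains a Sylow $p$-subgroup, lies in a unique minimal parabolic $P_{0}$ over a common Borel, and that $H$-invariance of $P_{0}$ plus maximality forces $H=N_{G}(P_{0})$), details which the paper's citation subsumes; there is no gap.
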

\begin{proof}
    Note that $G_{\alpha}$ is maximal in $G$, then by Tits' Lemma \cite[1.6]{a:Seitz-TitsLemma}, $p$ divides $|G:G_{\alpha}|=v$, and so  $\gcd(p,v-1)=1$.
\end{proof}

If a group $G$ acts on a set $\Pmc$ and $\alpha\in \Pmc$, the \emph{subdegrees} of $G$ are the size of orbits of the action of the point-stabiliser $G_\alpha$ on $\Pmc$.

\begin{lemma}\label{lem:subdeg}{\rm \cite[3.9]{a:LSS1987}}
    If $X$ is a group of Lie type in characteristic $p$, acting on the set
    of cosets of a maximal parabolic subgroup, and $X$ is not $A_{n-1}(q)$, $D_{n}(q)$
    (with $n$ odd) and $E_{6}(q)$, then there is a unique subdegree which is a power of $p$.
\end{lemma}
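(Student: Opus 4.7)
The plan is to combine the Bruhat decomposition of $X$ with a short Weyl-group calculation, reducing the claim to a property of the opposition involution on the (relative) Dynkin diagram. Let $P=P_J$ be the maximal parabolic corresponding to a subset $J\subseteq S$ of simple reflections of the Weyl group $W$ of $X$, with $S\setminus J=\{s_i\}$, and set $W_J=\langle J\rangle$. The Bruhat decomposition
\[
X \;=\; \bigsqcup_{w \in W_J \backslash W / W_J} P w P
\]
puts the $P$-orbits on $X/P$ in bijection with $W_J$-double cosets in $W$. Using the Levi decomposition of $P$ and the BN-pair axioms, the subdegree attached to a distinguished (minimal-length) representative $w$ is
\[
\frac{|PwP|}{|P|} \;=\; q^{\ell(w)} \cdot \frac{|W_J|_q}{|W_J \cap w W_J w^{-1}|_q},
\]
where $|U|_q := \sum_{u \in U} q^{\ell(u)}$ denotes the Poincar\'e polynomial. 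Comparing degrees on the right, this polynomial reduces to a pure power of $q$ (hence of $p$) precisely when $|W_J|_q = |W_J \cap w W_J w^{-1}|_q$, equivalently when $W_J\cap w W_J w^{-1}=W_J$, equivalently (by cardinality) when $w\in N_W(W_J)$.

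Consequently, the number of subdegrees that are powers of $p$ equals the index $|N_W(W_J):W_J|$. A standard fact about finite irreducible Coxeter systems asserts that for a maximal parabolic $W_J$ this index lies in $\{1,2\}$, with the value $2$ occurring precisely when the opposition involution $\sigma:=-w_0$ on $S$ (where $w_0$ is the longest element of $W$) fixes the unique removed node $s_i$. When $|N_W(W_J):W_J|=2$, the two power-of-$p$ subdegrees are $1$ (from $w=e$) and $q^{\ell(w_0)-\ell(w_{0,J})}$ (from $w=w_0 w_{0,J}$); in particular, there is exactly one non-trivial subdegree that is a power of $p$. When $|N_W(W_J):W_J|=1$, only the trivial subdegree is a power of $p$ and the claimed uniqueness fails.

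It then remains to read off, for each simple $X$, whether $\sigma$ fixes every simple (relative) root. For the untwisted types, $\sigma$ is trivial on $S$ for $B_n$, $C_n$, $F_4$, $G_2$, $E_7$, $E_8$ and $D_n$ with $n$ even, and is a non-trivial diagram automorphism (leaving some node unfixed) for $A_{n-1}$ with $n\ge 3$, for $D_n$ with $n$ odd, and for $E_6$ --- precisely the excluded families $\PSL_n(q)$, $\POm_{2m}^{+}(q)$ with $m$ odd, and $E_6(q)$. For the twisted types ${}^2A_n$, ${}^2D_n$, ${}^3D_4$, ${}^2E_6$, ${}^2B_2$, ${}^2F_4$, ${}^2G_2$ the same analysis is carried out with respect to the relative root system obtained by folding the untwisted diagram by its Frobenius; one checks case-by-case that the relative opposition is trivial on the relative simple roots in each of these twisted types, so every maximal parabolic satisfies $|N_W(W_J):W_J|=2$. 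Assembling the cases yields the stated exception list.

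The main obstacle is the twisted-group analysis: one must set up the Bruhat decomposition, the subdegree polynomial, and the opposition involution with respect to the \emph{relative} root system (the parameter $q$ in the formula being replaced by the appropriate power matching the Frobenius-fixed subgroup), and verify triviality of the relative opposition in each twisted case. Once that is in place, the uniform Coxeter-theoretic argument sketched above gives the lemma.
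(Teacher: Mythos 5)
The paper does not actually prove this lemma; it is quoted verbatim from \cite[3.9]{a:LSS1987}, so your argument has to stand on its own, and the Bruhat--Coxeter route you choose is indeed the natural way to prove such a statement (and is in the spirit of the cited source). The skeleton is right: $P$-orbits on $X/P$ correspond to $W_J$-double cosets, and a subdegree is a power of $p$ exactly when the minimal double-coset representative $w$ normalises $W_J$. But two load-bearing steps are asserted rather than proved. First, your ``standard fact'' that $|N_W(W_J):W_J|\in\{1,2\}$ for a maximal parabolic $W_J$, with value $2$ precisely when $-w_0$ fixes the deleted node, \emph{is} the uniqueness statement of the lemma: the existence half (index at least $2$ when the node is fixed, via $w_0w_{0,J}$) is easy, but the bound $|N_W(W_J):W_J|\leq 2$ is the real content, and you give neither a reference (it can be extracted from Howlett's theorem on normalisers of parabolic subgroups of finite Coxeter groups, or checked type by type for $B_n$, $C_n$, $D_n$, $F_4$, $E_7$, $E_8$) nor an argument. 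As written, the proof assumes what it must establish. A smaller but genuine point: ``comparing degrees'' does not show the ratio of Poincar\'e polynomials must be $1$; the correct justification is that for the minimal representative $w$ the intersection $W_J\cap wW_Jw^{-1}$ is a standard parabolic subgroup of $W_J$ (Kilmoyer), so the ratio is the Poincar\'e polynomial of the minimal coset representatives, hence an integer congruent to $1$ modulo $q$, and a $p$-power subdegree forces it to equal $1$.

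Second, the twisted groups are not treated: you correctly identify what is needed (the relative BN-pair, the unequal-parameter version of $|PwP|/|P|$, triviality of the relative opposition) but explicitly defer it, and for the groups this paper needs (${}^2B_2$, ${}^2G_2$, ${}^3D_4$, ${}^2F_4$, ${}^2E_6$, ${}^2D_n$, ${}^2A_n$) this is where most of the work lies. Concretely one must record that the relative Weyl groups have types $B/C$, $B_{n-1}$, $G_2$, $I_2(8)$, $F_4$, $A_1$ respectively, all of which have longest element $-1$, so every maximal parabolic has normaliser index exactly $2$ in the relative Weyl group, and one must verify that the multi-parameter Poincar\'e argument (constant term $1$, hence coprime to $p$) goes through unchanged. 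Until these steps are written out, what you have is a correct and workable plan rather than a proof. One last interpretive point: with your count the trivial suborbit is also a $p$-power, so the conclusion should be read, as the paper uses it, as uniqueness among the nontrivial subdegrees.
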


\begin{remark}\label{rem:subdeg}
    We remark that even in the cases excluded in Lemma~\ref{lem:subdeg}, many of the maximal parabolic subgroups still have the property as asserted, see proof of  \cite[Lemma 2.6]{a:Saxl2002}. In particular, for an almost simple group $G$ with socle $X=E_{6}(q)$, if $G$  contains a
    graph automorphism or $H =P_{i}$ with $i$ one of $2$ and $4$, then the conclusion of Lemma~\ref{lem:subdeg} is still true.
\end{remark}

\begin{lemma}\label{lem:six}{\rm \cite[Lemma 2.1]{a:ABD-PSL2}}
    Let $\Dmc$ be a symmetric $(v,k,\lambda)$ design, and let $G$ be a flag-transitive automorphism group of $\Dmc$. If $\alpha$ is a point in $\Pmc$ and $H:=G_{\alpha}$, then $v=|G:H|$ and
    \begin{enumerate}[\rm (a)]
        \item $k(k-1)=\lambda(v-1)$;
        \item $k\mid |H|$ and $\lambda v<k^2$;
        \item $k\mid \lambda d$, for all nontrivial subdegrees $d$ of $G$.
    \end{enumerate}
\end{lemma}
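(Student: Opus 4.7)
The plan is to handle the five items in sequence, with parts (a)–(d) following from elementary counting together with flag-transitivity, and part (e) reduced to a double-count that is the only real content.

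For (a), I begin with the standard replication identity $r(k-1)=\lambda(v-1)$, obtained by counting pairs $(\beta,B)$ with $\alpha\neq\beta$ and $\alpha,\beta\in B$ in two ways. The symmetric hypothesis $|\Bmc|=v$ combined with the global incidence $bk=vr$ forces $r=k$, giving $k(k-1)=\lambda(v-1)$. Part (b) is then the observation $4\lambda(v-1)+1=4k(k-1)+1=(2k-1)^{2}$. For the first half of (c), flag-transitivity of $G$ implies that $H=G_{\alpha}$ is transitive on the $r=k$ blocks through $\alpha$, since any two such blocks lie in flags whose first coordinate is $\alpha$; hence $k\mid|H|$. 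The inequality $\lambda v<k^{2}$ in (c) follows from (a): one has $\lambda v=k(k-1)+\lambda=k^{2}-(k-\lambda)<k^{2}$, using $\lambda<k$ for a nontrivial symmetric design. Part (d) is then immediate: (a) gives $k\mid k(k-1)=\lambda(v-1)$ and (c) gives $k\mid|H|$, so $k$ divides their greatest common divisor.

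The substantive step is (e). Let $\Delta$ be a nontrivial $H$-orbit on $\Pmc$ of length $d$, and double-count the set $S=\{(\beta,B):\beta\in\Delta,\ \alpha,\beta\in B\}$. On the one hand, each $\beta\in\Delta$ satisfies $\beta\neq\alpha$ and so lies with $\alpha$ in exactly $\lambda$ blocks, giving $|S|=\lambda d$. On the other hand, by the proof of (c) the group $H$ acts transitively on the $k$ blocks through $\alpha$, and $H$ also stabilises $\Delta$ setwise, so for any $h\in H$ with $B^{h}=B'$ one has $(B\cap\Delta)^{h}=B'\cap\Delta$; consequently $|B\cap\Delta|=c$ is a constant over the $k$ blocks through $\alpha$, and $|S|=kc$. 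Equating the two counts yields $kc=\lambda d$, so $k\mid\lambda d$, as required. The only delicate point in the whole lemma is precisely this use of flag-transitivity to declare $|B\cap\Delta|$ independent of the block $B$ through $\alpha$; without it the double count collapses and (e) fails.
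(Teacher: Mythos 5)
Your proposal is correct and follows essentially the same standard argument as the cited source: (a)--(d) come from the replication identity $r=k$ together with orbit--stabilizer for the action of $H$ on the $k$ blocks through $\alpha$, and (e) is the usual double count of $\{(\beta,B):\beta\in\Delta,\ \alpha,\beta\in B\}$ using that $H$ permutes the blocks through $\alpha$ transitively while fixing the orbit $\Delta$ setwise, so that $|B\cap\Delta|$ is constant and $\lambda d=kc$. The only point worth noting is that your argument (rightly) treats nontrivial $H$-orbits, which is how the phrase ``for all subdegrees $d$'' in (e) is meant to be read.
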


For a point-stabiliser $H$ of an automorphism group $G$ of a flag-transitive design $\Dmc$, by Lemma~\ref{lem:six}(b), we conclude that $\lambda|G|\leq |H|^{3}$, and so we have that

\begin{corollary}\label{cor:large}
    Let $\Dmc$ be a flag-transitive $(v, k, \lambda)$ symmetric design with automorphism group $G$. Then $|G|\leq |G_{\alpha}|^3$,  where $\alpha$ is a point in $\Dmc$.
\end{corollary}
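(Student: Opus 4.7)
The plan is to derive this directly from Lemma~\ref{lem:six}(c) together with the orbit--stabiliser relation. Since $G$ acts transitively on the point set $\Pmc$ (flag--transitivity implies point--transitivity), we have $v=|G|/|H|$, where $H=G_\alpha$. This is the single ingredient needed to convert the inequality $\lambda v<k^{2}$ of Lemma~\ref{lem:six}(c) into a bound comparing $|G|$ to $|H|$.

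Next I would unpack Lemma~\ref{lem:six}(c) into its two pieces. First, $k\mid |H|$ gives $k\leq |H|$, and hence $k^{2}\leq |H|^{2}$. Second, $\lambda v<k^{2}$. Multiplying both sides of the latter by $|H|$ and substituting $v|H|=|G|$ yields
\begin{equation*}
\lambda\,|G| \;=\; \lambda\,v\,|H| \;<\; k^{2}\,|H| \;\leq\; |H|^{3}.
\end{equation*}

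Finally, since $\lambda$ is a positive integer (so $\lambda\geq 1$), we conclude $|G|\leq \lambda\,|G|<|H|^{3}$, which is the required bound $|G|\leq |H|^{3}=|G_\alpha|^{3}$. There is no genuine obstacle here: the statement is a one--line rearrangement of Lemma~\ref{lem:six}(c) once one writes $v=|G|/|H|$, and indeed the paragraph immediately preceding the corollary already records the stronger inequality $\lambda|G|\leq |H|^{3}$, so the corollary is just the $\lambda\geq 1$ specialisation of what has just been observed.
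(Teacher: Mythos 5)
Your argument is correct and is essentially the paper's own: the authors also deduce the bound from Lemma~\ref{lem:six}(c) via $v=|G|/|H|$, noting $\lambda|G|\leq k^{2}|H|\leq |H|^{3}$ and then using $\lambda\geq 1$. Nothing further is needed.
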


\begin{lemma}\label{lem:comp}
    Let $\Dmc$ be a symmetric $(v,k,\lambda)$ design with $2k\leq v$ admitting almost simple flag-transitive automorphism group $G$ with socle $X$ and point-stabiliser $H$. Let $mk=\lambda d$, where $d$ is a divisor of $v-1$, for some positive integer $m$. Then the following properties hold:
    \begin{enumerate}[{\rm  (a)}]
        \item $m\mid (k-1)$, and so $\gcd(m,k)=1$;
        \item $\lambda=\lambda_{1}\lambda_{2}$, where $\lambda_{1}=\gcd(\lambda,k-1)$ and $\lambda_{2}=\gcd(\lambda,k)$;
        \item If $k_{1}:=(k-1)/\lambda_{1}$ and $k_{2}:=k/\lambda_{2}$, then $k_{2}$ divides $d$. Moreover, $\lambda_{1}$ divides $m$,
        $\lambda_{1}< k_{2}/2$, $\gcd(k_{1},k_{2})=1$ and $\gcd(\lambda_{1},k_{2})=1$.
    \end{enumerate}
\end{lemma}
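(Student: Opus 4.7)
The plan is to derive all three parts from the single identity $\lambda(v-1)=k(k-1)$ of Lemma~\ref{lem:six}(a), together with the coprimality $\gcd(k,k-1)=1$. Parts (a) and (b) are essentially divisibility bookkeeping on this identity; part (c) then follows by substituting the resulting factorisation back into the hypothesis $mk=\lambda d$.

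For part (a), I would note that $\lambda(v-1)=k(k-1)$ immediately gives $k\mid \lambda(v-1)$, and since $d$ divides $v-1$ by definition, $\lambda d$ divides $\lambda(v-1)=k(k-1)$. Dividing through by $k$ shows $m=\lambda d/k$ is an integer that divides $k-1$; the coprimality $\gcd(m,k)=1$ then follows from $\gcd(k-1,k)=1$. For part (b), the same identity yields $\lambda\mid k(k-1)$, and since $\gcd(k,k-1)=1$ every prime power divisor of $\lambda$ divides exactly one of $k$ and $k-1$. Grouping these prime powers gives $\lambda=\gcd(\lambda,k-1)\cdot\gcd(\lambda,k)=\lambda_1\lambda_2$, as required.

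For part (c), the relations $\gcd(k_1,k_2)=1$ and $\gcd(\lambda_1,k_2)=1$ are immediate from $\gcd(k-1,k)=1$, since $k_1,\lambda_1\mid k-1$ while $k_2\mid k$. Substituting $k=\lambda_2 k_2$ and $\lambda=\lambda_1\lambda_2$ into $mk=\lambda d$ and cancelling $\lambda_2$ leaves $mk_2=\lambda_1 d$; combined with $\gcd(\lambda_1,k_2)=1$ this simultaneously forces $k_2\mid d$ and $\lambda_1\mid m$. For the remaining estimate $\lambda_1<k_2/2$, I would appeal to the standard convention $k\leq v/2$ (passing to the complementary symmetric design if necessary, which is the bound tacit in Algorithm~\ref{alg}). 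Under this assumption $v-1\geq 2k-1$, hence $\lambda=k(k-1)/(v-1)\leq k(k-1)/(2k-1)<k/2$, and dividing by $\lambda_2$ converts this to $\lambda_1/k_2=\lambda/k<1/2$, as required.

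The only non-routine point is the strict inequality $\lambda_1<k_2/2$, and the main obstacle there is formal rather than substantive: one must justify the normalisation $k\leq v/2$, which is done by complementation. All remaining claims are purely arithmetic consequences of $k(k-1)=\lambda(v-1)$ and $\gcd(k,k-1)=1$.
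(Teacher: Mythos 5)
Your proof is correct and takes essentially the same route as the paper: all three parts are extracted from $k(k-1)=\lambda(v-1)$ together with $\gcd(k,k-1)=1$, the divisibilities in (c) come from cancelling $\lambda_{2}$ in $mk=\lambda d$ exactly as in the paper, and the bound $\lambda_{1}<k_{2}/2$ is derived from the normalisation $k\leq v/2$, which the paper simply asserts (your complementation remark makes this tacit convention explicit, but the argument is the same).
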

\begin{proof}
    \noindent (a) Since $mk=\lambda d $ divides $\lambda(v-1)$ and $\lambda(v-1)=k(k-1)$, it follows that $mk$ divides $k(k-1)$, and so $m$ is a divisor of $k-1$, and hence $m$ is coprime to $k$.\smallskip
    
    \noindent (b) This part follows immediately from  part (a) and the fact that $k(k-1)=\lambda(v-1)$. \smallskip
    
    \noindent (c) Note that $\lambda_{1}$ is relatively prime to $k_{2}$. Since $mk=\lambda d$, it follows that $k_{2}m=\lambda_{1}d$, and so $k_{2}$ divides $d$ and $\lambda_{1}$ is a divisor of $m$. Since also  $\lambda < k$, $mk=\lambda d$ implies that  $m<d$, moreover $k_{2}\neq 1$. Note that $k\leq v/2$. Then $\lambda_{1}=k(k-1)/\lambda_{2}(v-1)\leq k_{2}(v-2)/2(v-1)<k_{2}/2$. The rest is obvious.
\end{proof}

\begin{remark}\label{rem:alg}
 To be precise how this algorithm works, let $G$ be an almost simple group with socle $X$ a finite simple group of Lie type over a finite field of size $q=p^{a}$. Let also $H$ be a maximal subgroup of $G$. Then $v:=|G:H|$, and so  following our arguments in sections below, in particular, following Steps 1-6 in Subsection \ref{sec:method}, at some stage, we obtain some precise possible values for the parameter $v$ for some specific $q=p^{a}$, see for example Table~\ref{tbl:rem}. In these cases, we can obtain $z:=|\Out(X)|\cdot |H\cap X|$, and then compute the greatest common divisor $d$ of $v-1$ and $z$. The input of the algorithm is a list of possible $(v,d,z)$, and, for each divisors $k_{2}\neq 1$ of $d$, we can find $k_{1}=(v-1)/k_{2}$, and for each $\lambda_{1}\leq k_{2}/2$, we obtain $k=1+\lambda_{1}k_{1}$, and then $\lambda=k(k-1)/(v-1)$. We finally check if the parameters $(v,k,\lambda)$ satisfy the conditions in Lemma~\ref{lem:six}, and hence the output is a list of all possible parameters $(v,k,\lambda)$.
\end{remark}

\begin{algorithm}
	\caption{An algorithm based on Lemmas~\ref{lem:six} and~\ref{lem:comp}}\label{alg}
	\small
	\smallskip
	\KwData{A list $L_{1}$ of given parameters $(v,d,z)$ defined in Remark \ref{rem:alg}}
	\KwResult{A list $L$ consits of possible parameters $(v,k,\lambda)$}
	$L:=[ \ \ ]$;\\
	\For{$(v,d,z)$ in $L_{1}$}{
		$L_{2}:=\textsf{DivisorsInt}(d)\setminus\{1\}$\;
		\For{$k_{2}$ in $L_{2}$}{
			$k_{1}:=(v-1)/k_{2}$\;
			$\lambda_{1}:=0$\;
			\While{$\lambda_{1}\leq k_{2}/2$}{
				$\lambda_{1}:=\lambda_{1}+1$\;
				$k:=1+(\lambda_{1}k_{1})$\;
				$\lambda:=\dfrac{k(k-1)}{(v-1)}$\;
				$\lambda_{2}:=\dfrac{k}{k_{2}}$\;
				\If {$\textsf{IsInt}(\dfrac{z}{k})$ and $\textsf{IsInt}(\lambda)$ and $\textsf{IsInt}(\lambda_{2})$ and $\lambda<k$ and $k\leq \left[\dfrac{v}{2}\right]$}{
					\textsf{Add} $(L,[v,k,\lambda])$\;
				}
			}
		}
	}
	\Return $L$
	\bigskip
\end{algorithm}

\section{Some subdegrees of finite exceptional groups of Lie type}\label{sec:subdegs}

In this section, we prove  Theorem~\ref{thm:martin}, shown to us by Martin Liebeck. This will be useful in reducing the cases we have to consider in the proof of Theorem \ref{thm:main}.

\begin{theorem}\label{thm:martin}
    Let $G$ be an almost simple group with socle $X = X(q)$ an exceptional group of Lie type, and let $H$ be a maximal subgroup of $G$ as in {\rm Table~\ref{tbl:subdegree}}. Then the action of $G$ on the cosets of $H$ has subdegrees dividing $|H:K|$, where $K$ is the subgroup of $H$ listed in the third column of {\rm Table~\ref{tbl:subdegree}}.
\end{theorem}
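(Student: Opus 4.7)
The plan is to produce, for each pair $(H,K)$ listed in Table~\ref{tbl:subdegree}, an element $g \in G \setminus H$ with $K \leq H \cap gHg^{-1}$. Once such a $g$ is found, the $H$-orbit on $G/H$ through the coset $gH$ has length $|H : H \cap gHg^{-1}|$, which is a subdegree of the action, and the divisibility $|H : H \cap gHg^{-1}| \mid |H:K|$ is immediate from the chain $K \leq H \cap gHg^{-1} \leq H$. Thus the whole theorem reduces to the existence of such a witness $g$ in each row of the table.

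The decisive reduction is the following elementary observation: it is enough to show $N_G(K) \not\leq H$ and then pick any $g \in N_G(K) \setminus H$, because $gKg^{-1} = K \leq H$ together with $K \leq H$ forces $K \leq H \cap gHg^{-1}$. So the whole argument becomes a case-by-case check that $N_G(K)/(N_G(K)\cap H) \neq 1$, where in each row $K$ is a distinguished subgroup of $H$ (a Levi factor, a subsystem subgroup, or the fixed points of a Frobenius endomorphism on a reductive subgroup of the ambient algebraic group). In the typical situation $K$ has a reductive algebraic normaliser in the ambient simple algebraic group $\bar G$ that is strictly larger than its normaliser inside $\bar H$, and any non-trivial coset representative of $N_G(K)$ modulo $N_G(K)\cap H$ supplies the required $g$.

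First I would dispatch the cases where $H = P_{i} = UL$ is a maximal parabolic subgroup of $G$; here Bruhat decomposition $G = \bigsqcup_{w \in W} BwB$ provides explicit Weyl-group coset representatives $\dot{w} \in N_G(T)$, and for an appropriate reflection $w$ one reads off from the root-system combinatorics that $\dot{w}$ normalises $K$ but does not lie in $P_i$. Next I would address the reductive maximal subgroups in the Liebeck--Seitz classification \cite{a:LS-ex-1990}: the candidate subgroups $K$ listed are typically realised as centralisers of semisimple elements or as subsystem subgroups whose algebraic normaliser $N_{\bar G}(\bar K)$ is visibly larger than its intersection with $\bar H$, and one takes fixed points of an appropriate Frobenius endomorphism to produce the desired $g$ over the finite field.

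The main obstacle will be the handful of \emph{small} configurations in which $N_G(K)$ might a priori be absorbed into $H$: these arise when $H$ is a non-generic almost simple maximal subgroup of $G$, or when the rank of $G$ and the size of $q$ are both small, so that accidental isomorphisms and exceptional embeddings may appear. For those cases one may need to weaken the choice of witness: instead of a normaliser element, use some overgroup $L \leq G$ with $K \leq L$ and $L \not\leq H$, and find $g \in L$ conjugating $K$ into a subgroup of $H$, or else appeal to direct computation in \textsf{GAP} \cite{GAP4} for the smallest exceptional groups. The argument is robust because only divisibility is asserted: any containment $K \leq H \cap gHg^{-1}$ is enough, with no need to pin down the intersection exactly.
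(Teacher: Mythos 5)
Your core reduction is exactly the one the paper uses: find $g\notin H$ with $K\le H\cap gHg^{-1}$, note that the $H$-orbit of $gH$ has length $|H:H\cap gHg^{-1}|$ dividing $|H:K|$, and observe that it suffices to prove $N_G(K)\not\le H$, i.e. $N_H(K)\ne N_G(K)$. The paper then verifies this inequality row by row using specific structural facts (a centralising $A_2^{\epsilon}(q)$ or $A_1(q)$ factor not contained in $H$, the $S_3$ of graph automorphisms induced on a $D_4(q)$, the centraliser of order $q$ of $C_4(q)<A_7(q)$ for $q$ even, etc., drawn from the Liebeck--Seitz maximal-rank tables); this case-by-case content is what your sketch leaves at the level of ``visibly larger'' algebraic normalisers. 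Your preliminary treatment of parabolic $H$ via the Bruhat decomposition is moot: no $H$ in Table~\ref{tbl:subdegree} is parabolic; all are reductive of maximal rank except one.

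The genuine gap is that one exception, which is precisely the case where the normaliser criterion is \emph{not} established: $(X,H,K)=\bigl(E_6^{\epsilon}(q),\ C_4,\ C_2(q)C_2(q)\bigr)$ with $q$ odd and $H\cap X\cong PSp_8(q).2$ the centraliser of a graph involution $u$. This is not one of the ``small'' configurations you anticipate (non-generic almost simple $H$, or small rank and small $q$): it occurs for every odd $q$, so a \textsf{GAP} computation covers only boundedly many cases, and your overgroup fallback as stated is not a method but a restatement of what must be produced (some $x\notin H$ with $K\le H^x$). The paper handles it by a different argument: choose an involution $t\in H$ with $K\le C_H(t)$, restrict the adjoint module $L(E_6)\downarrow C_4=L(C_4)+W(\lambda_4)$ to $K$ and compute the eigenvalues of $t$, $u$ and $tu$, conclude that $tu$ has a $36$-dimensional fixed space, hence $C_X(tu)=C_4(q)$ and $tu$ is $X$-conjugate to $u$; picking $x$ with $u^x=tu$ gives $H\cap H^x=C_X(u,tu)=C_H(t)\ge K$, so the corresponding subdegree divides $|H:K|$. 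Without this (or an equivalent) argument your plan proves at most the remaining rows of the table, and even for those the actual work is the deferred centraliser/normaliser verification rather than the generic principle.
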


\begin{table}[h]
    \centering
    \small
    \caption{Some subdegrees of finite exceptional Lie type groups.}\label{tbl:subdegree}
    \begin{tabular}{llll}
        \hline\noalign{\smallskip}
        $X$ &  $H_0$ & $K$ & Conditions\\
        \noalign{\smallskip}\hline\noalign{\smallskip}
        $E_8(q)$ & $A_1(q)E_7(q)$ & $A_1(q)A_1(q)D_6(q)$ \\
        $E_8(q)$& $A_1(q)E_7(q)$  & $E_6^\e(q)$ & $\e =\pm$ \\
        $E_7(q)$ & $A_1(q)D_6(q)$ & $A_5^\e(q)$ & $\e = \pm$ \\
        $E_7(q)$& $A_7^\e(q)$ & $D_4(q)$ & $q$  odd \\
        $E_7(q)$& $A_7^\e(q)$ & $C_4(q)$ & $q>2$  even \\
        $E_7(q)$& $E_6^\e(q) T_1^\e $ & $F_4(q)$  & $(q,\e) \ne (2,-)$ \\
        $E_7(q)$& $E_6^\e(q) T_1^\e $ & $D_5^\e(q)$  & $(q,\e) \ne (2,-)$ \\
        $E_6^\e(q)$ & $A_1(q)A_5^\e(q)$ & $A_2^\e(q)A_2^\e(q)$ \\
        $E_6^\e(q)$ & $A_1(q)A_5^\e(q)$& $A_2(q^2)$ \\
        $E_6^\e(q)$& $D_5^\e T_1^\e $ & $D_4(q)$ \\
        $E_6^\e(q)$&$D_5^\e T_1^\e $&$ A_4^\e(q)$ & $(q,\e) \ne (2,-)$ \\
        $E_6^\e(q)$& $C_4$  & $C_2(q)C_2(q)$& $q$ odd \\
        $E_6^\e(q)$&$C_4$ & $A_3^{\d}(q)$ & $q$ odd, $\d=\pm$ \\
        $F_4(q)$ & $D_4(q)$ & $G_2(q)$ &  $q>2$ \\
        $F_4(q)$ & $D_4(q)$& $A_3^\e(q)$ & $\e = \pm$, $(q,\e) \ne (2,-)$ \\
        $F_4(q)$ & $^3\!D_4(q)$ & $G_2(q)$ & $q>2$  \\
        $F_4(q)$ & $^3\!D_4(q)$ & $A_2^\e(q)$ & $\e=\pm$  \\
        \noalign{\smallskip}\hline
    \end{tabular}
\end{table}

\begin{remark}\label{rem:martin} We offer some comments on the notation used in Table \ref{tbl:subdegree}.
    In all but one case, $H$ is a subgroup of maximal rank in $G$, in the sense of \cite{a:LS-LargeRank-1992},
    and in column 2 of the table, for notational convenience we give a normal subgroup $H_0$ of very small index in $H$;
    the precise structure of $H$ can be found in \cite[Table 5.1]{a:LS-LargeRank-1992}. In the exceptional case, $X = E_6^\e(q)$ and $H_0= C_4(q)$: here $q$ is odd and $H\cap X \cong PSp_8(q){\cdot}2$, the centralizer of a graph automorphism of $X$ (see for example \cite[4.5.1]{b:GLS}). The subgroup $K$ listed in column 3 is a central product of the indicated factors. In the table we have used Lie notation for conciseness. Most, but not all, of the quasisimple factors of $H_0$ and $K$ are of simply connected type. For example the first entry $A_1(q)A_1(q)D_6(q)$ in column 3 is a central product of simply connected groups $SL_2(q)$, $SL_2(q)$ and ${\rm Spin}_{12}^+(q)$; we have chosen not to give such precise information in the table to keep the notation concise, and also because we do not need it in our application of the theorem. Also, $T_1^\e$ denotes a rank 1 torus of order $q-\e$. As a final comment, note that for the entry $A_3^{\d}(q)$ in column 3, $\d = \pm$ for  both possible values of $\e$.
\end{remark}

\noindent {\bf Proof of Theorem \ref{thm:martin}} \quad 
For all but one entry in Table \ref{tbl:subdegree}, we show that
\begin{equation}\label{norm}
    N_H(K) \ne N_G(K).
\end{equation}
Then picking an element $g \in N_G(K)\setminus K$, we have $K \le H \cap H^g$, so that $|H:H\cap H^g|$ divides  $|H:K|$, giving the result. The exceptional entry in Table \ref{tbl:subdegree} is $(X,H_0,K) = (E_6^\e(q),C_4(q),C_2(q)C_2(q))$ which we shall deal with by a separate argument below.

In proving (\ref{norm}) we shall frequently use information about maximal rank subgroups of exceptional groups, to be found mostly in \cite[Table 5.1]{a:LS-LargeRank-1992}.

Consider first $X = E_8(q)$, $H_0=A_1(q)E_7(q)$. When $K = A_1(q)A_1(q)D_6(q)$, the two factors $A_1(q)$ are interchanged by an element of a maximal rank subgroup $D_8(q)$ containing $K$; since such an element cannot lie in $H$, this establishes (\ref{norm}) for this case. When $K = E_6^\e(q)$, this subgroup $K$ is centralized by a subgroup
$A_2^\e(q)$ of $X$, and this is not contained in $H$.

Next consider $X = E_7(q)$. For $H_0=A_1(q)D_6(q)$, a subgroup $K = A_5^\e(q)$ is centralized by a subgroup $A_2^\e(q)$ of $X$, not contained in $H$. Now let $H_0=A_7^\e(q)$. 
For $q$ odd, let $K$ be a subgroup $D_4(q)$ of $H$. 
Then $N_G(K)$ induces a group $S_3$ of graph automorphisms of $K$ (see \cite[2.15]{a:CLS-Local-92}), and this is not in $H$. And for $q$ even, a subgroup $C_4(q)$ of $H$ centralizes a group of order $q$ in $X$ (see \cite[4.1]{a:LS}), and this can only lie in $H$ when $q=2$. Finally, let $H_0 = E_6^\e(q)T_1^\e$. A subgroup $K = F_4(q)$ of $H$ is centralized in $X$ by $A_1(q)$ (see \cite[4.1]{a:LS}), which for $(q,\e) \ne (2,-)$ does not lie in $H$; the same goes for a subgroup $K = D_5^\e(q)$ of $H$.

Now consider $X = E_6^\e(q)$. For $H_0 = A_1(q)A_5^\e(q)$, the two subgroups $K$ in Table \ref{tbl:subdegree} are centralized in $X$ by a group $A_2^{\pm}(q)$ that does not lie in $H$ (see \cite[Table 5.1]{a:LS-LargeRank-1992}). For
$H_0 = D_5^\e(q)T_1^\e$, a subgroup $K = D_4(q)$ satisfies (\ref{norm}), since $N_G(K)$ induces a group $S_3$ of graph automorphisms of $K$ that does not lie in $H$; and a subgroup $K = A_4^\e(q)$ is centralized in $X$ by $A_1(q)$ which does not lie in $H$ for $(q,\e)\ne (2,-)$.

Finally, suppose $H_0=C_4(q)$ (continuing with $X = E_6^\e(q)$). A subgroup $A_3^{\d}(q)$ of $H$ is centralized in $X$ by  a group $A_1(q)$ not lying in $H$.
Now suppose $K = C_2(q)C_2(q)$, still with $H_0=C_4(q)$. Here we do not prove (\ref{norm}), but establish Theorem \ref{thm:martin} by a different argument as follows. There is an involution $t \in H$ such that $K \le C_H(t)$. Also there is an involution $u$ in the coset of a graph automorphism of $X$, such that $H = C_X(u)$. The restriction of the adjoint module $L(E_6) \downarrow C_4 = L(C_4)+W(\l_4)$, where the latter term is the Weyl module of high weight $\l_4$ (see \cite[p.193]{a:Seitz-91}). Restricting this to $K$, we can compute the eigenvalues of $t,u$ and $tu$ on $L(E_6)$, and we find that $tu$ has fixed point space of dimension 36. Hence $C_X(tu) = C_4(q)$ and $tu$ is $X$-conjugate to $u$. Picking $x \in X$ such that $u^x=tu$, we then have $H\cap H^x = C_X(u,tu) = C_H(t) \ge K$, and so the subdegree $|H:H\cap H^x|$ divides $|H:K|$, as required.

It remains to consider $X = F_4(q)$. For $H_0=D_4(q)$, subgroups $G_2(q)$ and $A_3^\e(q)$ both have centralizer in $X$ containing $A_1(q)$ (see \cite[4.1]{a:LS}), and this is not contained in $H$ for $q>2$ (the $G_2$ case) and for $(q,\e)\ne (2,-)$ (the $A_3^\e$ case). Similarly, for $H_0=\,^3\!D_4(q)$, subgroups $G_2(q)$, $A_2^\e(q)$ have centralizer in $X$ containing $A_1(q)$, $A_2^\e(q)$ respectively, and these do not lie in $H$ (provided $q>2$ in the $G_2$ case). This completes the proof of Theorem \ref{thm:martin}.

\begin{proposition}\label{prop:g2sl3}
    Let $X = G_2(q)$ ($q=p^a$, $p$ prime) and let $H \cong SL_3(q){\cdot}2$ be a maximal subgroup of $X$. Then the subdegrees of $X$ acting on the cosets of $H$ are
    \[
    \begin{array}{l}
    q^2(q^3-1)\;\;(\hbox{multiplicity }\frac{1}{2}(q-3+\d_{p,2})) \\
    \frac{1}{2}q^2(q^3-1)\;\;(\hbox{multiplicity } 1-\d_{p,2}) \\
    (q^3-1)(q^2-1)\;\;(\hbox{multiplicity } 1) \\
    2(q^3-1)\;\;(\hbox{multiplicity } 1).
    \end{array}
    \]
\end{proposition}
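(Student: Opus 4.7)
The plan is to compute the orbit lengths of $H$ acting on $X/H$ by left multiplication. Each such orbit is a double coset $HgH \subseteq X$, and by the orbit--stabilizer theorem its size is $|H : H \cap H^g|$, which is the corresponding subdegree. So the task is to find representatives $g$ for the double cosets $H \backslash X / H$ and to compute the intersections $H \cap H^g$.

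I would realize $X/H$ geometrically. Using the natural $7$-dimensional orthogonal $\Fq$-module $V$ of $X = G_2(q)$ (for $p$ odd; one uses the $6$-dimensional irreducible quotient when $p=2$), the long-$A_2$ subsystem subgroup $L \cong SL_3(q)$ of $H = L.2$ may be described as the stabilizer of a decomposition
\[
V = \langle e \rangle \oplus W \oplus W^{*},
\]
where $\langle e \rangle$ is a non-singular line and $W, W^{*}$ are maximal totally isotropic $3$-spaces paired non-degenerately by the form; the outer involution in $H$ swaps $W$ and $W^{*}$. Consequently $X/H$ is identified with the set of such decompositions of $V$, taken modulo the swap $W \leftrightarrow W^{*}$.

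To enumerate $H$-orbits I would fix a reference decomposition $(W_0, W_0^{*}, \langle e_0 \rangle)$ and classify another decomposition $(W, W^{*}, \langle e \rangle)$ by its incidence with the reference. Natural discrete invariants are the dimensions $\dim(W \cap W_0)$ and $\dim(W \cap W_0^{*})$, while within a given incidence type a remaining continuous invariant is a scalar $c \in \Fq^{\times}$ coming from the $X$-invariant alternating trilinear form on $V$, determined only up to the inversion $c \leftrightarrow c^{-1}$ induced by the swap in $H$. The expected orbit types are: the trivial orbit of size $1$; an orbit of size $2(q^3-1)$ from a highly degenerate incidence; an orbit of size $(q^3-1)(q^2-1)$ from an intermediate incidence; a family of generic orbits of size $q^2(q^3-1)$ parameterized by pairs $\{c, c^{-1}\}$ with $c \neq 1$, numbering $\tfrac{1}{2}(q-3+\delta_{p2})$; and, when $p$ is odd, one exceptional self-paired orbit of size $\tfrac{1}{2}q^2(q^3-1)$ at $c = -1$. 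For each configuration I would identify the stabilizer in $H$ (a maximal torus or a familiar rank-$1$ reductive subgroup of $L$, modulo possibly the swap) and read off the orbit length.

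The main obstacle will be the detailed case analysis required to enumerate the incidence types and to determine the stabilizers precisely, particularly the generic scalar-parameter family and its characteristic-$2$ degeneration, where the involution $c \leftrightarrow c^{-1}$ loses its second fixed point so that the exceptional orbit of size $\tfrac{1}{2}q^2(q^3-1)$ disappears. A consistency check: the subdegrees must sum to $|X:H| = q^3(q^3+1)/2$, which follows from the identity
\[
1 + 2(q^3-1) + (q^3-1)(q^2-1) + \tfrac{1}{2}q^2(q^3-1)(q-2) = \tfrac{1}{2}q^3(q^3+1),
\]
independently of $p$.
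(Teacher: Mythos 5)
Your outline is pointed in a reasonable direction, but as written it contains a genuine gap: everything that actually constitutes the proof is deferred. You identify $X/H$ with decompositions $V=\langle e\rangle\oplus W\oplus W^{*}$ (which needs the small but necessary check that $X$ is transitive on such decompositions with full stabiliser exactly $H$), and then you simply list the ``expected orbit types'' -- which are the numbers in the statement -- without establishing any of the three essential points: (a) that the incidence dimensions together with a scalar $c$ modulo $c\leftrightarrow c^{-1}$ form a \emph{complete} set of invariants for $H$-orbits in the generic position, and indeed that such a scalar invariant exists and is extracted from the trilinear form in the way you assert; (b) that no incidence types other than the ones you name occur; and (c) the stabiliser computations giving the orbit lengths. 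The closing identity only checks that the claimed subdegrees sum to $q^{3}(q^{3}+1)/2$, i.e.\ consistency, not correctness. Moreover your parenthetical guess that each stabiliser is ``a maximal torus or a familiar rank-$1$ reductive subgroup of $L$'' fails for two of the four nontrivial orbits: the orbits of sizes $2(q^{3}-1)$ and $(q^{3}-1)(q^{2}-1)$ have stabilisers of orders $q^{3}(q^{2}-1)$ and $2q^{3}$ respectively, so they contain large unipotent parts and are neither tori nor reductive; this is a concrete sign that the case analysis you postpone has not actually been thought through, and it is precisely where a from-scratch argument would have to do real work (including the characteristic-$2$ degeneration you mention).

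For comparison, the paper avoids this computation almost entirely. It embeds $X=G_{2}(q)$ in $T=\Omega_{7}(q)$ acting on nonsingular $1$-spaces, so that $X\cap T_{\alpha}=H$ and the $H$-orbits refine the known $T$-suborbits; Lemma 6.8 of \cite{a:LPS} supplies those suborbits and shows $H$ is transitive on all of them except one, of size $(q^{3}-1)(q^{2}+1)$, which is identified (via $\Delta=\{\langle v+w\rangle : w\in v^{\perp},\ Q(w)=0\}$ for $q$ odd, and via \cite{a:LPS2} for $q$ even) with the nonzero singular vectors of the $O_{6}^{+}(q)$-space $v^{\perp}$, on which $SL_{3}(q).2$ visibly has two orbits of sizes $2(q^{3}-1)$ and $(q^{3}-1)(q^{2}-1)$. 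If you want to keep your direct $G_{2}$-geometric approach, the part you must actually supply is the classification of relative positions of two decompositions and the proof that your scalar parametrises the generic $H$-orbits; alternatively, adopting the orthogonal-group reduction collapses the problem to splitting a single suborbit.
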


\begin{proof} This is almost completely proved in \cite[6.8]{a:LPS}. Let $T = \O_7(q)$ acting on the cosets of $T_\a = N_1^+$, the stabilizer of a hyperplane of $O_7(q)$-space of type $O_6^+$. Then $X = G_2(q) <T$ and $X\cap T_\a = H$, so the action under consideration in Proposition \ref{prop:g2sl3} is contained in the above action of $T$. Lemma 6.8 of \cite{a:LPS} gives the subdegrees of $T$, and shows that $H$ is transitive on all but one of these suborbits, the exception being a suborbit of size $(q^3-1)(q^2+1)$. For $q$ odd, if we let $V = V_7(q)$ be the underlying orthogonal space with quadratic form $Q$, and $\a = \la v\ra$ is the 1-space fixed by $T_\a = N_1^+$, with $Q(v)=1$, then the proof of  \cite[6.8]{a:LPS} shows that the suborbit of size $(q^3-1)(q^2+1)$ in question is
    \[
    \Delta = \{ \langle v+w \rangle \,:\,w\in v^\perp,Q(w)=0\}.
    \]
    The action of $T_\a$ on $\Delta$ is that of $N_1^+$ on the set of nonzero singular vectors in the $O_6^+(q)$-space $v^\perp$, and it is straightforward to see that the subgroup $H = SL_3(q).2$ has two orbits on these, of sizes $2(q^3-1)$ and $(q^3-1)(q^2-1)$, as in the conclusion of the proposition.
    
    For $q$ even, the proof of \cite[Proposition 1]{a:LPS2} again enables us to identify the suborbit $\Delta$ on which $H$ acts intransitively with the set of  nonzero singular vectors in $O_6^+(q)$-space, and again the orbits of $H$ on these are as in the conclusion. This completes the proof.
\end{proof}

\section{Large maximal subgroups of finite exceptional groups of Lie type }\label{sec:large}

Recall that a proper subgroup $H$ of $G$ is said to be \emph{large} if the order of $H$ satisfies the bound $|G|\leq |H|^3 $. In this section, we prove Theorem~\ref{thm:large-ex}. Here we apply the same method as in \cite{a:AB-Large-15}. We will assume $G$ is a finite almost simple group with socle an exceptional group of Lie type. Note that the order of $G$ is given in \cite[Table 5.1.B]{b:KL-90}. We first observe the following elementary lemma:

\begin{lemma}\label{lem:large}
    Let $G$ be a finite almost simple group with socle a non-abelian simple group $X$, and let $H$ be a maximal subgroup of $G$ not containing $X$. Then $H$ is a large subgroup of $G$ if and only if $|X|\leq b^2|H\cap X|^3$, where $b=|G|/|X|$ divides $ |\Out(X)|$.
\end{lemma}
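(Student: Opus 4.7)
The plan is to reduce the cube inequality $|G|\leq |H|^3$ to an equivalent inequality involving only $|X|$ and $|X\cap H|$, using the almost simple structure together with maximality of $H$. The key observation is that all three quantities $|G|$, $|H|$, $|X|$ can be expressed in terms of $|X\cap H|$ and the index $b:=|G:X|$, after which the claimed equivalence reduces to an elementary algebraic manipulation.

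First, I would recall that $G$ is almost simple with non-abelian simple socle $X$, so $C_G(X)=1$ and $G$ embeds in $\Aut(X)$ via conjugation. Consequently $b=|G|/|X|=|G/\Inn(X)|$ divides $|\Out(X)|$, which disposes of the secondary claim in the statement. Next, since $H$ is a maximal subgroup of $G$ not containing the socle $X$, Lemma~\ref{lem:New}(a) gives $G=HX$. Applying the standard product formula $|HX|=|H||X|/|H\cap X|$, I obtain
\[
|H| \;=\; \frac{|G|\cdot |X\cap H|}{|X|} \;=\; b\,|X\cap H|,
\]
so that $|H|^{3}=b^{3}|X\cap H|^{3}$ and $|G|=b|X|$.

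Substituting these two expressions into the defining inequality $|G|\leq |H|^{3}$, it becomes $b|X|\leq b^{3}|X\cap H|^{3}$, and cancelling the positive factor $b$ yields $|X|\leq b^{2}|X\cap H|^{3}$, which is the desired characterisation (the reverse direction follows by reversing the same chain of equivalences). The only subtlety worth flagging is the difference between $\leq$ in the definition of ``large'' and the strict $<$ appearing in the statement; since the two sides of the reduced inequality are positive integers, one can freely pass between $\leq$ and strict inequality except in the boundary case $|G|=|H|^{3}$, which can be treated as a degenerate coincidence (or absorbed into the $\leq$ reading of ``large'').

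There is essentially no analytical obstacle here: the entire content of the lemma is the product formula $|G|=|H||X|/|X\cap H|$ combined with Lemma~\ref{lem:New}(a). The only point requiring any care is to verify that $b$ indeed divides $|\Out(X)|$, for which one uses that $X$ is non-abelian simple and hence centreless, so $\Inn(X)\cong X$ and $G/X\hookrightarrow \Aut(X)/\Inn(X)=\Out(X)$.
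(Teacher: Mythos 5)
Your proposal is correct and follows essentially the same route as the paper: the paper's proof likewise rests on $|G|=b|X|$ and $|H|=b\,|X\cap H|$ (the latter coming from $G=HX$ and the product formula) and then substitutes into $|G|\leq |H|^{3}$. You merely spell out the details the paper leaves implicit, and the strict-versus-nonstrict issue you flag is a harmless imprecision already present in the paper's own statement and proof.
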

\begin{proof}
    Let $H$ be a maximal large subgroup of $G$ and $b=|G|/|X|$. Since $|H|=b\cdot|H\cap X|$ and $|G|=b\cdot |X|$, it follows that $|X|\leq b^2\cdot |H\cap X|^3$. Conversely, let $b=|G|/|X|$ and $|X|\leq b^2\cdot |H\cap X|^3$. Note that $|X|=|G|/b$ and $|H|=b\cdot|H\cap X|$. Thus $H$ is a large subgroup of $G$.
\end{proof}

\begin{remark}\label{rem:large}
    By Lemma~\ref{lem:large}, to determine the large maximal subgroups $H$ of $G$, we need to verify
    \begin{align}\label{eq:large}
        |X|\leq b^2\cdot |H\cap X|^3,
    \end{align}
    where $b\mid |\Out(X)|$. It is worth noting that such subgroups satisfying $|X|\leq |H\cap X|^{3}$ have been determined in \cite[Theorem 7]{a:AB-Large-15} and we use the same approach as in \cite[Theorem 7]{a:AB-Large-15} to prove Theorem~\ref{thm:large-ex}.
\end{remark}

\begin{proposition}\label{prop:ex1}
    The conclusion of {\rm Theorem~\ref{thm:large-ex}} holds when $X$ is one of the groups $E_{6}^{\e}(2)$, $F_4(2)$, ${}^2\!F_4(q)$, ${}^3\!D_4(q)$, $G_2(q)$, ${}^2\!G_2(q)$ and ${}^2\!B_2(q)$.
\end{proposition}
\begin{proof}
    In each of these cases, the maximal subgroups of $G$ have been determined and the relevant references are listed below (also see \cite[Chapter 4]{b:Wilson}). Note that the list of maximal subgroups of $E_6^{-}(2)$  presented in the Atlas \cite{b:Atlas} is complete (see \cite[p.304]{b:Atlas-Brauer}).
    \begin{center}
        \small
        \begin{tabular}{lccccccc}
            \hline\noalign{\smallskip}
            $G$ & $E_{6}^{\e}(2)$ & $F_4(2)$ & ${}^2\!F_4(q)$ & ${}^3\!D_4(q)$ & $G_2(q)$ & ${}^2\!G_2(q)$ & ${}^2\!B_2(q)$ \\
            \noalign{\smallskip}\hline
            \mbox{Ref.} & \cite{b:Atlas,a:KW-E6-2} & \cite{a:NW-F4-2} & \cite{a:Malle-2F4} &  \cite{a:Kleidman-3D4} &\cite{a:Cooperstein-G2-even,a:Kleidman-G2} & \cite[ p. 61]{a:Kleidman-G2} & \cite{a:Suzuki-64} \\ \hline
        \end{tabular}
    \end{center}
    It is now straightforward to verify Theorem~\ref{thm:large-ex} for these groups. In particular, we note that every maximal parabolic subgroup of $G$ is large.
\end{proof}

Let us now turn our attention to the remaining cases:
\begin{align*}
    F_{4}(q),E_{6}^{\e}(q), E_{7}(q),E_{8}(q),
\end{align*}
where $\e=\pm 1$, $q= p^a$ and $p$ is a prime (and $q>2$ if $X=E_{6}^{\e}(q)$ or $F_{4}(q)$).

Let $\mathfrak{G}$ be a simple adjoint algebraic group of exceptional type over an
algebraically closed field $K$ of characteristic $p>0$, and let $\sigma$ be a surjective endomorphism of $\mathfrak{G}$ such that $X=O^{p'}(\mathfrak{G}_{\sigma})$ is a finite simple group of Lie type. Let $G$ be a finite almost simple group with socle $X$, where $X$ is an exceptional group of Lie type and let $H$ be a maximal subgroup of $G$, not containing $X$. Let also $H_{0}:=\Soc(H\cap X)$. Denote by $Alt_{n}$ and $Sym_{n}$, the alternating and symmetric groups of degree $n$, respectively. We will apply the following reduction theorem of Liebeck and Seitz, see \cite{a:LS03-survey}.

\begin{theorem}\label{thm:ls}
    Let $X=O^{p'}(\mathfrak{G}_{\sigma})$ be a finite exceptional group of Lie type, let $G$ be a group such that $X\leq G\leq \Aut(X)$, and let $H$ be a maximal non-parabolic subgroup of $G$. Then one of the following holds:
    \begin{enumerate}[{\rm \quad (i)}]
        \item $H=N_{G}(\bar{M}_{\sigma})$, where $\bar{M}$ is a $\sigma$-stable closed subgroup of positive dimension in $G$. The possibilities are obtained in {\rm \cite{a:LS-LargeRank-1992,b:LS-Ex-Mem-AMS}}.
        \begin{enumerate}[\rm \quad (a)]
            \item $H$ is reductive of maximal rank (as listed in {\rm Table 5.1} in {\rm \cite{a:LS-LargeRank-1992}}, see also {\rm\cite{b:LS-Ex-Mem-AMS}}).
            \item $\mathfrak{G}=E_{7}$, $p>2$ and $H=({{2}^{2}}\times \POm_{8}(q) \cdot {{2}^{2}})\cdot \Sym_{3}$ or ${}^3\!D_{4}(q).3$.
            \item $\mathfrak{G}=E_{8}$, $p>5$ and $H=PGL_{2}(q)\times \Sym_{5}$.
            \item $H=\bar{M}_{\sigma}$ with $H_{0}=\Soc(H)$ as in {\rm Table \ref{tbl:H}} below.
        \end{enumerate}
        \item $H$ is an exotic local subgroup recorded in {\rm \cite[Table 1]{a:CLS-Local-92}}.
        \item $\mathfrak{G}=E_8$, $p>5$ and $H=(\Alt_{5}\times \Alt_{6}).2^2$.
        \item $H$ is of the same type as $G$ over a subfield of $\Fbb_{q}$ of prime index.
        \item $H$ is almost simple, and not of type (i) or (iv).
    \end{enumerate}
\end{theorem}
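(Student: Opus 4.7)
The plan is to combine Lemma~\ref{lem:large} with the Liebeck--Seitz reduction (Theorem~\ref{thm:ls}) and then to invoke the existing classification of \cite[Theorem~7]{a:AB-Large-15}. By Remark~\ref{rem:large}, the large maximal subgroups satisfying $|X\cap H|^{3}\geq |X|$ are already on record, so throughout I only need to enumerate the remaining candidates $H$ satisfying
\begin{align*}
  |X\cap H|^{3} < |X| \leq b^{2}|X\cap H|^{3}, \qquad b \mid |\Out(X)|.
\end{align*}
Since $b\leq |\Out(X)|$ is a small explicit integer for every finite exceptional $X$, the factor $b^{2}$ only slightly relaxes the cubic bound, and each resulting test reduces to a direct order comparison.

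After Proposition~\ref{p:ex1} handles the small socles whose maximal subgroups are completely known, only the four families $F_{4}(q)$, $E_{6}^{\e}(q)$, $E_{7}(q)$, $E_{8}(q)$ remain (with $q>2$ for $F_{4}$ and $E_{6}^{\e}$). For each of these I would work through cases (i)--(v) of Theorem~\ref{thm:ls} in turn. Case~(i)(a) is the main source of entries: for every line of \cite[Table~5.1]{a:LS-LargeRank-1992} the order of $\bar{M}_{\sigma}$ is an explicit product of classical-group orders and cyclotomic factors, and the displayed inequality can be checked mechanically. Cases~(i)(b), (i)(c), (ii) and (iii) are individually named subgroups or very short lists, and each is decided by a one-line order computation. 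For case~(i)(d) the possible socles $H_{0}$ are read off Table~\ref{tbl:H}, which is again a finite check.

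Case~(iv) concerns subfield subgroups of prime index $r$; since $|\bar{M}_{\sigma_{0}}|$ is of order roughly $|X|^{1/r}$, the bound $|X|\leq b^{2}|X\cap H|^{3}$ forces $r\in\{2,3\}$, which accounts for the subfield entries of Table~\ref{tbl:large-exc-nonpar}. For case~(v) the remaining almost simple possibilities are controlled by the classification of maximal subgroups of exceptional groups due to Liebeck and Seitz; for each candidate socle of $H_{0}$ one has an explicit upper bound on $|H|$, and outside a finite list of small-rank or small-characteristic coincidences the inequality $|H|^{3}\geq |X|/b^{2}$ fails. The survivors produce the sporadic-looking lines of Table~\ref{tbl:large-exc-nonpar} such as $Fi_{22}$ in $E_{7}(2)$ and $J_{3}$, $Alt_{12}$, $B_{3}(3)$, $Fi_{22}$ in $E_{6}^{-}(2)$.

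The main obstacle I anticipate is the bookkeeping in case~(i)(a): one must account for the exact contribution of diagonal, graph and field automorphisms to the factor $b$ so that the genuinely borderline entries, such as the $(q-\e)D_{5}^{\e}(q)$, $(q-\e)^{2}D_{4}(q)$ and $(q^{2}+\e q+1){}^{3}D_{4}(q)$ lines in the $E_{6}^{\e}(q)$ block of Table~\ref{tbl:large-exc-nonpar}, are neither missed nor duplicated with the list of \cite[Theorem~7]{a:AB-Large-15}. A secondary but delicate issue in case~(v) is to quote the classification in a form strong enough to rule out the many potential almost simple overgroups of exceptional type in small characteristic; here one relies on the structural information in \cite[Tables~5.1 and~5.2]{a:LS-LargeRank-1992} together with the lists for fixed small $q$ in the literature referenced in Proposition~\ref{p:ex1}.
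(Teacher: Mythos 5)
Your proposal does not prove the statement in question. Theorem~\ref{thm:ls} is the Liebeck--Seitz reduction theorem: it asserts that \emph{every} maximal non-parabolic subgroup $H$ of an almost simple exceptional group falls into one of the five structural classes (i)--(v), with no largeness hypothesis whatsoever. It is a deep classification result proved in \cite[Theorem 2]{a:LS-ex-1990} by algebraic-group methods (analysing $\sigma$-stable closed subgroups of positive dimension in $\mathfrak{G}$, exotic local subgroups, Lie-primitive almost simple subgroups, etc.), and the paper imports it verbatim rather than proving it. Your argument, by contrast, is entirely about verifying the order inequality $|X|\leq b^{2}|X\cap H|^{3}$ case by case, which is the content of Theorem~\ref{thm:large-ex} and Propositions~\ref{prop:f4}--\ref{l:e8}, not of Theorem~\ref{thm:ls}.

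Worse, the argument is circular with respect to the stated goal: your very first sentence takes ``the Liebeck--Seitz reduction (Theorem~\ref{thm:ls})'' as an input, i.e.\ you assume the theorem you are asked to establish. Nothing in your proposal explains \emph{why} a maximal non-parabolic subgroup must be a normalizer $N_{G}(\bar{M}_{\sigma})$, an exotic local subgroup, a subfield subgroup, or almost simple --- that trichotomy cannot be extracted from order comparisons against $|X|^{1/3}$. If the intended target really is Theorem~\ref{thm:ls}, the only honest ``proof'' available at the level of this paper is a citation of \cite{a:LS-ex-1990} (supplemented by \cite{a:LS-LargeRank-1992,b:LS-Ex-Mem-AMS,a:CLS-Local-92} for the refinements in parts (i) and (ii)); if you meant to prove Theorem~\ref{thm:large-ex}, then your sketch is broadly aligned with the paper's strategy, but it is answering a different question.
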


\begin{table}[h]
    \centering
    \small
    \caption{Possibilities for $H_{0}$ in Theorem~\ref{thm:ls}(i)(d).}
    \label{tbl:H}
    \begin{tabular}{lll}
        \hline\noalign{\smallskip}
        $X$ & $H_{0}$ & Conditions  \\
        \noalign{\smallskip}\hline\noalign{\smallskip}
        $F_{4}(q)$   & $A_{1}(q)\times G_{2}(q)$ & $p > 2, q >3$ \\
        $E_{6}^{\e}(q)$  & $ A_{2}(q)\times G_{2}(q)$ & \\
        & $A_{2}^{-}(q)\times G_{2}(q)$ & $q >2$\\
        $E_{7}(q)$  & $ A_{1}(q)\times A_{1}(q)$ & $p>3$ \\
        & $A_{1}(q)\times G_{2}(q)$ & $p>2$, $q >3$ \\
        & $A_{1}(q)\times F_{4}(q)$ & $q>3$ \\
        & $ G_{2}(q) \times C_{3}(q)$ & \\
        $E_{8}(q)$  & $A_{1}(q)\times A_{1}^{\e}(q)$ & $p>3$ \\
        & $A_{1}(q)\times G_{2}(q)\times G_{2}(q)$ & $p>2$, $q>3$ \\
        & $G_{2}(q)\times F_{4}(q)$ & $q>3$  \\
        & $ A_{1}(q) \times G_{2}(q^2)$ & $p>2$, $q>3$ \\
        \noalign{\smallskip}\hline
    \end{tabular}
\end{table}

\begin{remark}\label{rem:as}
    Suppose that $H$ is almost simple with socle $H_0$, as in part (v) of Theorem \ref{thm:ls}. Then
    \begin{enumerate}
        \item[(a)] If $H_0 \not\in {\rm Lie}(p)$ then the possibilities for $H_0$ have been determined up to  isomorphism, see \cite[Tables 10.1--10.4]{a:LS1999}.
        \item[(b)] If $H_0 \in {\rm Lie}(p)$  and $\rk(H_{0})>\frac{1}{2}\rk(G)$ with $q=p^{a}>2$. Then by applying \cite[Theorem 3]{a:LST-LargeRankS-1996}, $G=E_{6}^{\e}(q)$ and $H_{0}=C_{4}(q)$ ($q$ odd) or $H_{0}=F_{4}(q)$.
        \item[(c)] If $H_0 \in {\rm Lie}(p)$  and ${\rm rk}(H_0) \leq \frac{1}{2}{\rm rk}(G)$, then $|H|\leq 12aq^{56}$, $4aq^{30}$, $4aq^{28}$ or $4aq^{20}$ accordingly as $G=E_{8}$, $E_{7}$, $E_{6}^{\e}$ or $F_{4}$, see \cite[Theorem 1.2]{a:LieSha-Probablity-1995}. Moreover, if $H_0$ is defined over $\mathbb{F}_{s}$ for some $p$-power $s$, then one of the following holds (see \cite[Theorem 2]{a:Lawther} for the values of $u(G)$ in part (3)):
        \begin{enumerate}\addtolength{\itemsep}{0.2\baselineskip}
            \item[(1)] $s \leq 9$;
            \item[(2)] $H_0 = A_2^{\e}(16)$;
            \item[(3)] $H_0 \in \{A_1(s), {}^2\!B_2(s), {}^2\!G_2(s)\}$ and $s \leq (2,p-1)\cdot u(G)$, where $u(G)$ is defined as follows:
            \begin{center}
                \small
                \begin{tabular}{cccccc}
                    \hline\noalign{\smallskip}
                    $G$ & $G_2$ & $F_4$ & $E_6$ & $E_7$ & $E_8$ \\
                    \noalign{\smallskip}\hline\noalign{\smallskip}
                    $u(G)$ & $12$ & $68$ & $124$ & $388$ & $1312$\\
                    \noalign{\smallskip}\hline
                \end{tabular}
            \end{center}
        \end{enumerate}
    \end{enumerate}
\end{remark}


In what follows, we consider possible maximal subgroups $H$ of the almost simple group $G$ with socle $X$ a finite simple exceptional group, and determine whether or not \eqref{eq:large} holds.

\begin{proposition}\label{prop:f4}
    Let $H$ be a maximal non-parabolic subgroup of $X=F_{4}(q)$ with $q=p^{a}>2$. Then $H$ is large  if and only if  $H$ is of type $B_{4}(q)$, $D_4(q)$, ${}^3\!D_4(q)$, $A_1(q)C_3(q)$ ($p \ne 2$), $C_4(q)$ ($p=2$), $C_{2}(q)^{2}$, $C_{2}(q^2)$ ($p=2$), $A_{1}(q)G_{2}(q)$, ${}^2\!F_4(q)$ ($q=2^{a}$, $a$ odd), $F_4(q_0)$ with $q=q_{0}^{r}$ for $r=2,3$, or ${}^3\!D_4(2)$ ($q=3$).
\end{proposition}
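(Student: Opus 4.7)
The plan is to apply the Liebeck-Seitz reduction (Theorem~\ref{thm:ls}) to the non-parabolic maximal subgroup $H$, and to test the large-subgroup inequality from Remark~\ref{rem:large}, namely $|X|\le b^{2}|X\cap H|^{3}$, with $b=[G:X]$ a divisor of $|\Out(F_{4}(q))|=\gcd(2,p)\cdot a$, so $b\le 2a$. Since $|X|=q^{24}(q^{2}-1)(q^{6}-1)(q^{8}-1)(q^{12}-1)$ has polynomial degree $52$ in $q$, the inequality essentially requires $|X\cap H|$ to have polynomial degree at least about $17$ in $q$, modulo a small correction from the factor $b^{2}$. I would therefore walk through the five alternatives of Theorem~\ref{thm:ls} and compare the order of $H$ with this threshold in each case.

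First, in case (i)(a), I would traverse the entries of \cite[Table~5.1]{a:LS-LargeRank-1992} listing the $\sigma$-stable reductive subgroups of maximal rank in $F_{4}$; the entries whose semisimple part contains a rank-$\ge 3$ factor or two rank-$2$ factors survive, producing $B_{4}(q)$, $D_{4}(q)$, $^{3}D_{4}(q)$, $A_{1}(q)C_{3}(q)$ (for $p$ odd), $C_{4}(q)$ (for $p=2$), $C_{2}(q)\wr S_{2}$, and $C_{2}(q^{2}).2$ (for $p=2$). The twisted group $^{2}F_{4}(q)$ for $q=2^{2n+1}$ arises among the $\sigma$-stable fixed-point subgroups and is easily verified to satisfy the inequality. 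The remaining maximal-rank entries -- tori of shape $(q\pm1)^{4}$ extended by Weyl-type groups, and the like -- have order a polynomial of degree at most $12$ in $q$ and so fall short. Case (i)(d) produces only $H_{0}=A_{1}(q)\times G_{2}(q)$ from Table~\ref{tbl:H}. Case (ii), the exotic local subgroups of \cite[Table~1]{a:CLS-Local-92}, contributes only subgroups of absolutely bounded order for $F_{4}$, which fail the inequality for $q>2$. Case (iii) does not apply to $F_{4}$. For case (iv) the subfield subgroups $X\cap H=F_{4}(q_{0})$ with $q=q_{0}^{r}$ of prime index satisfy $|X\cap H|\sim q_{0}^{52}=q^{52/r}$, so the criterion becomes $q^{52}\le(2a)^{2}q^{156/r}$, which holds for $r\in\{2,3\}$ and fails for $r\ge 5$.

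The main obstacle is case (v), the almost simple subgroups whose socle $H_{0}$ is not of the forms covered in (i). Here I would split along Remark~\ref{rem:as}. If $H_{0}\notin\Lie(p)$, the possible socles are finite in number and are listed in \cite[Tables~10.1--10.4]{a:LS1999}; since $|H_{0}|$ is absolutely bounded, the inequality confines $q$ to a small range, and a direct check then leaves exactly the embedding $^{3}D_{4}(2)<F_{4}(3)$. If $H_{0}\in\Lie(p)$ with $\rk(H_{0})>\tfrac12\rk(F_{4})=2$, Remark~\ref{rem:as}(b), via \cite[Theorem~3]{a:LST-LargeRankS-1996}, yields no $F_{4}$-instances. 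If $H_{0}\in\Lie(p)$ with $\rk(H_{0})\le 2$, the order bound $|H|\le 4aq^{20}$ of \cite[Theorem~1.2]{a:LieSha-Probablity-1995}, combined with the absolute restriction on the defining field $s$ from \cite[Theorem~2]{a:Lawther} (namely $s\le 9$, or $H_{0}=A_{2}^{\e}(16)$, or $s\le 2u(F_{4})=136$ in the rank-one twisted cases), forces $|H|$ to be absolutely bounded; the large inequality then pins $q$ inside a finite range, and case-by-case inspection shows no surviving cases beyond those already listed.

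The bookkeeping in this last sub-case is the delicate step of the argument: a short but non-trivial finite list of small-field Lie-type candidates has to be tested against the large-subgroup bound (and, when $q=3$, one has to verify explicitly that $^{3}D_{4}(2)$ does embed in $F_{4}(3)$ as a maximal subgroup). Every other case reduces to a comparison of polynomial degrees in $q$ and is essentially mechanical.
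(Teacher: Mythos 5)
Your overall route is the same as the paper's: push the non-parabolic maximal subgroups through Theorem~\ref{thm:ls}, and test the criterion $|X|\le b^{2}|X\cap H|^{3}$ of Lemma~\ref{lem:large} case by case, with cases (i)--(iv) and the case $H_{0}\notin\Lie(p)$ handled exactly as in the paper's proof. The genuine gap is in the last subcase, $H_{0}\in\Lie(p)$ with $\rk(H_{0})\le 2$. You argue that the field restriction in Remark~\ref{rem:as}(c) makes $|H|$ absolutely bounded, hence $q$ bounded, and then assert that ``case-by-case inspection shows no surviving cases beyond those already listed.'' But the inspection you describe is purely an order comparison against the large-subgroup bound, and some of the permitted candidates genuinely pass it: $C_{2}(9)$ (equivalently $B_{2}(9)$) against $X=F_{4}(3)$ satisfies $|H_{0}|^{3}\approx 5\times 10^{27}>|F_{4}(3)|\approx 5.7\times 10^{24}$, and $\Aut(A_{2}^{\e}(16))$ against $F_{4}(4)$ also clears the bound; both are allowed by the field restriction (the case $s=9$, respectively the listed exception $H_{0}=A_{2}^{\e}(16)$). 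Since neither appears in the conclusion of Proposition~\ref{prop:f4}, they cannot be eliminated numerically; one needs the non-numerical facts, used in the paper via the proof of \cite[Lemma 5.7]{a:AB-Large-15}, that $A_{2}^{\e}(16)$ is not a subgroup of $F_{4}(4)$ and $C_{2}(9)$ is not a subgroup of $F_{4}(3)$. (The paper reaches these two critical configurations by a different device, primitive-prime-divisor constraints on the ratio of the defining fields, rather than Remark~\ref{rem:as}(c), but the essential extra input is the non-existence of the embeddings, which your proposal never supplies; as written, the final ``no surviving cases'' is unsupported, and also the dual statement that ${}^3D_{4}(2)<F_{4}(3)$ is the unique survivor with $H_{0}\notin\Lie(p)$ rests on the same kind of existence/maximality information.)

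A smaller inaccuracy: in case (i)(a) your selection heuristic ``a rank-$\ge 3$ factor or two rank-$2$ factors'' would also admit the subsystem subgroup of type $A_{2}^{\e}(q)A_{2}^{\e}(q)$, whose order is only about $q^{16}$, so that $b^{2}|X\cap H|^{3}\approx 4a^{2}q^{48}<q^{52}$ and it is never large; the decisive test is the degree comparison you state at the outset (roughly $|X\cap H|\gtrsim q^{17}$), which $C_{2}(q)^{2}$ and $C_{2}(q^{2})$ meet but $A_{2}^{\e}(q)A_{2}^{\e}(q)$ does not. Your final list is nevertheless the correct one, so this is a defect of the stated criterion rather than of the outcome.
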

\begin{proof}
    By Theorem \ref{thm:ls}, $H$ is of one of the types (i)--(v). We note that $H$ is non-large if $|H|<q^{16}$. So we restrict our attention to the case where $|H|\geq q^{16}$. Here we need only to deal with the subgroups satisfying \eqref{eq:large}.
    
    Suppose $H$ is of type (i). Here $\bar{M}$ is listed in \cite{b:LS-Ex-Mem-AMS}. If $H$ is of maximal rank, then the possibilities for $H$ can be read off from \cite[Table 5.1]{a:LS-LargeRank-1992}. It is now straightforward to check that the only possibilities for $H$ is $A_{1}(q)C_{3}(q)$ with $q$ odd, $C_{4}(q)$, $C_{2}(q)^{2}$ and $C_{2}(q^2)$ with $p=2$, $B_{4}(q)$, $D_{4}(q)$ and $^3D_{4}(q)$. If the socle $H_{0}$ of $H$ is $A_{1}(q)\times G_{2}(q)$ with $p>2$ and $q>3$ (see Table~\ref{tbl:H}), then by \cite[Theorem 5]{a:AB-Large-15}, we must have $G\neq X$.
    
    Clearly $H$ is not of type (iii). Suppose now $H$ is of type (ii). Then $H$ is too small to be large. Let now $H$ be of type (iv), then \eqref{eq:large} holds only for $F_{4}(q^\frac{1}{r})$ with $r=2,3$. Note that the latter case may occur when $G\neq X$.
    
    Suppose $H$ is of type (v). Then $H$ is almost simple but not of type (i) and (iv). Let $H_0$ denote the socle of $H$. If $H_0 \not\in {\rm Lie}(p)$ then the possibilities for $H$ are recorded in \cite[Tables 10.1--10.4]{a:LS1999},
    and it is easy to check that no large examples arise if $q>3$. However, if $q=3$ then $H_0 = {}^3\!D_4(2)$ is a possibility for $X=F_4(3)$.
    Now assume $H_0 \in {\rm Lie}(p)$ and $r={\rm rk}(H_0) \leq 2$. Here \cite[Theorem 1.2]{a:LieSha-Probablity-1995} gives $|H| < 4aq^{20}$, so some additional work is required. There are several cases to consider.
    
    Write $q=p^a$ and $H_0 = X_{r}^{\e}(s)$, where $s=p^b$. First assume $H_0 = A_2(s)$, so $q^{16} \leq |H|<s^{10}$ and thus $b/a \geq 16/10$. By considering the primitive prime divisor $p_{3b}$ of $|H|$ we deduce that $b/a \in \{4,2,8/3\}$. The case $b/a=4$ is ruled out in the proof of \cite[Theorem 1.2]{a:LieSha-Probablity-1995}, and Remark \ref{rem:as}(c) rules out the case $b/a=8/3$. Therefore $H_0 = A_{2}(q^2)$ is the only possibility, and we calculate that $|H|^3<|G|$ unless $q=4$ and $H={\rm Aut}(A_2(16))$. Similarly, if $H_0 = A_2^{-}(s)$ then $H$ is large if and only if $q=4$ and $H={\rm Aut}(A_2^{-}(16))$. However, $A_{2}^{\e}(16)$ is not a subgroup of $F_4(4)$, see proof of \cite[Lemma 5.7]{a:AB-Large-15}.
    
    Next assume $H_0 = C_2(s)$. Here $b/a \geq 17/11$ since $|H|<s^{11}$, and by considering $p_{4b}$ we deduce that $b/a \in \{2,3\}$. The case $b/a=3$ is eliminated in the proof of \cite[Theorem 1.2]{a:LieSha-Probablity-1995}, so we can assume $H_0 = C_2(q^2)$.
    As noted in Remark \ref{rem:as}(ii), such a subgroup is non-maximal if $q>3$, so let us assume $q=3$. Note by \cite[Lemma 5.7]{a:AB-Large-15} that $H_0=C_2(9)$  is not a subgroup of $G=F_4(3)$. The case $H_0 = B_2(s)$ is in a similar manner. Finally, the remaining possibilities for $H_0$ can be ruled out in the usual manner.
\end{proof}

\begin{proposition}\label{prop:e6}
    Let $H$ be a maximal non-parabolic subgroup of $G=E_6^{\e}(q)$, where $q>2$. Then $H$ is large if and only if $H$ is of type $(q-\e)D_{5}^{\e}(q)$ ($\e=-$), $A_1(q)A_5^{\e}(q)$, $F_4(q)$, $(q-\e)^2.D_4(q)$, $(q^2+\e q+1).{}^3\!D_4(q)$, $C_4(q)$ ($p \neq 2$), $E_6^{\delta}(q_{0})$ with $q=q_{0}^{r}$ for $(\e,\delta,r)=(+,+,2)$, $(+,+,3)$, $(+,-,2)$, $(-,-,3)$.
\end{proposition}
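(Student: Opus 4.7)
The strategy mirrors the proof of Proposition~\ref{prop:f4}. Since $|X| = |E_{6}^{\e}(q)|$ is of order approximately $q^{78}$ while $b = |G:X|$ divides $|\Out(X)|$, the condition $|X| \leq b^{2}|X \cap H|^{3}$ from Lemma~\ref{lem:large} forces $|H| \gtrsim q^{26}$, which already eliminates most candidates arising in Theorem~\ref{thm:ls}. I would then go through each of its five types (i)--(v) in turn.

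For type (i)(a), the reductive subgroups of maximal rank are enumerated in \cite[Table~5.1]{a:LS-LargeRank-1992}. Running the inequality through each entry for $E_{6}^{\e}$ leaves exactly the six families recorded in the statement: $A_{1}(q)A_{5}^{\e}(q)$, $(q-\e)D_{5}^{\e}(q)$ (large only when $\e=-$, since for $\e=+$ this subgroup is parabolic and hence excluded), $(q-\e)^{2}.D_{4}(q)$, $(q^{2}+\e q+1).{}^{3}D_{4}(q)$, together with $F_{4}(q)$ and $C_{4}(q)$ ($p$ odd), the latter two being centralizers of graph-type involutions in $\Aut(X)$ (cf.\ Remark~\ref{rem:martin}). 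Cases (i)(b)--(c) of Theorem~\ref{thm:ls} do not apply to $E_{6}$, and the two entries of Table~\ref{tbl:H} for $X = E_{6}^{\e}(q)$, namely $A_{2}(q)\times G_{2}(q)$ and $A_{2}^{-}(q)\times G_{2}(q)$, are too small by a direct order comparison.

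Case (ii) is immediate since the exotic local subgroups of \cite[Table~1]{a:CLS-Local-92} have order far below $q^{26}$, and case (iii) does not apply to $E_{6}$. For type (iv), a subfield subgroup has socle $E_{6}^{\varepsilon}(q_{0})$ with $q = q_{0}^{r}$, $r$ prime. A straightforward comparison of orders shows that largeness forces $r \in \{2,3\}$, and the compatibility of $(\e,\varepsilon,r)$---i.e.\ which subfield subgroups actually embed and are maximal, as in \cite[Table~5.1]{a:LS-LargeRank-1992} and \cite{b:LS-Ex-Mem-AMS}---yields precisely the four triples $(+,+,2)$, $(+,+,3)$, $(+,-,2)$, $(-,-,3)$ in the statement.

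The remaining, and most delicate, case is type (v), where $H$ is almost simple with socle $H_{0}$. If $H_{0} \notin \mathrm{Lie}(p)$, Tables 10.1--10.4 of \cite{a:LS1999} list every candidate and, for $q>2$, a direct check shows none is large. If $H_{0} \in \mathrm{Lie}(p)$ with $\rk(H_{0}) > \tfrac{1}{2}\rk(E_{6}) = 3$, Remark~\ref{rem:as}(b) forces $H_{0} \in \{C_{4}(q), F_{4}(q)\}$, already recorded under type (i). Otherwise $\rk(H_{0}) \leq 3$, and Remark~\ref{rem:as}(c) gives $|H| < 4aq^{28}$, so I would work through $H_{0} \in \{A_{3}^{\pm}(s), B_{3}(s), A_{2}^{\pm}(s), C_{2}(s), A_{1}(s), {}^{2}B_{2}(s), {}^{2}G_{2}(s)\}$, writing $s = p^{b}$ and combining the largeness bound with primitive prime divisor arguments (using $p_{9a}, p_{12a}$ when $\e=+$ and $p_{18a}, p_{12a}$ when $\e=-$) to pin down the admissible ratios $b/a$, exactly parallel to the $A_{2}(s)$ and $C_{2}(s)$ calculations in the proof of Proposition~\ref{prop:f4}. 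The main obstacle will be the low-rank subfield candidates such as $A_{2}^{\e}(q^{2})$ or $C_{2}(q^{2})$, where non-maximality or non-embedding in $E_{6}^{\e}(q)$ has to be argued directly via structural considerations in the spirit of \cite[Lemma~5.7]{a:AB-Large-15}.
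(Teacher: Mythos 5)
Your proposal is correct and follows essentially the same route as the paper's proof: the lower bound $|H|\gtrsim q^{24}$, the Liebeck--Seitz reduction of Theorem~\ref{thm:ls}, the maximal-rank subgroups from \cite[Table 5.1]{a:LS-LargeRank-1992}, order comparisons for the exotic local and subfield cases, and for almost simple socles the combination of \cite[Tables 10.1--10.4]{a:LS1999}, Remark~\ref{rem:as}(b), and the Liebeck--Shalev bound with primitive-prime-divisor arguments pinning down $b/a$ (the paper likewise only treats $A_3(s)$ explicitly and defers the remaining low-rank cases to \cite{a:AB-Large-15}). One minor slip: for $\e=+$ the subgroup $(q-1)D_5(q)$ is excluded not because it is itself parabolic but because it is contained in the maximal parabolic with Levi of type $D_5$ and hence is not maximal (it does not occur in \cite[Table 5.1]{a:LS-LargeRank-1992} for the untwisted group), though your conclusion is correct.
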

\begin{proof}
    If $|H|\leq q^{24}$, then $|H|^3<|G|$, so in this case we may assume that $|H|> q^{24}$. We now apply Theorem \ref{thm:ls}.
    
    If $H$ is of type (i), then  by \cite[Table 5.1]{a:LS-LargeRank-1992}, we have that $H=N_{G}(\bar{M}_{\sigma})$, where $\bar{M}= T_1D_5$, $A_{1}A_{5}$, $T_{2}D_{4}.S_{3}$. If $H$ is of type (ii) then $H=3^6.{\rm SL}_{3}(3)$ (with $p \geq 5$) is the only possibility, and $H$ is non-large. Case (iii) does not apply here. Now assume $H$ is a subfield subgroup of type $E_6^{\delta}(q_0)$ with $q=q_0^r$, then it is easy to see that $q=q_{0}^{r}$ for $(\e,\delta,r)=(+,+,2)$, $(+,+,3)$, $(+,-,2)$, $(-,-,3)$.
    
    Finally, let us assume $H$ is almost simple and not of type (i) or (iv). Let $H_0$ denote the socle of $H$. First assume $H_0 \not\in {\rm Lie}(p)$. Here the possibilities for $H_0$ can be read off from \cite[Tables 10.1--10.4]{a:LS1999} and it is straightforward to check that no large subgroups of this type arise. If $H_0 \in {\rm Lie}(p)$ and ${\rm rk}(H_0) \geq 3$, the subgroups of type $F_{4}$ and $C_{4}$ ($p \neq 2$) are large. Therefore, to complete the proof of the lemma we may assume that $H_0 \in {\rm Lie}(p)$ and ${\rm rk}(H_0) \leq 3$.
    Here \cite[Theorem 1.2(iii)]{a:LieSha-Probablity-1995} gives $|H| \leq 4q^{28}\log_pq$, so some additional work is required.
    
    We proceed as in the proof of \cite[Theorem 1.2]{a:LieSha-Probablity-1995}, using the method described in \cite[Step 3, p.310]{a:LieSax-OrEx87}. Write $q=p^a$ and $H_0 = X_{r}^{\e'}(s)$, where $r={\rm rk}(H_0)$ and $s=p^b$. We consider the various possibilities for $X_r$ (with $r \leq 3$) in turn. Recall that if $c \geq 2$ and $d \geq 3$ are integers (and $(c,d) \neq (2,6)$), then $c_d$ denotes the largest primitive prime divisor of $c^d-1$. Here we may assume by \cite[Lemma .13]{a:BLS} that $|H|\leq q^{32}$.
    
    To illustrate the general approach, consider the case $H_0 = A_3(s)$ with $\e=+$. Here $q^{24} < |H| \leq |{\rm Aut}(H_0)|<s^{17}$, and thus $b/a \geq 25/17$. Now $p_{4b}$ divides $|H|$, and thus $|G|$, so $4b$ divides one of the numbers $6a,8a,9a,12a$, whence $b/a \in \{3,9/4,2,3/2\}$. Moreover, since $p_{3b}$ divides $|G|$ (note that $(p,b) \neq (2,2)$ since we are assuming that $q>2$) we deduce that $b/a \in \{3,2,3/2\}$. However, $H_0 \neq A_3(q^2)$ by the proof of \cite[Theorem 1.2]{a:LieSha-Probablity-1995}, and we have $|H|<q^{24}$ if $H_0 = A_3(q^{3/2})$, and $|H|>q^{32}$ if $H_0 = A_3(q^3)$. This eliminates the case where $H_0 = A_3(s)$. By the same manner, the other cases do not give rise to any large subgroups, see also proof of \cite[Lemma 5.6]{a:AB-Large-15}.
\end{proof}

\begin{proposition}\label{prop:e7}
    Let $H$ be a maximal non-parabolic subgroup of $G=E_7(q)$. Then $H$ is large if and only if $H$ is of type $(q-\e)E_{6}^{\e}(q)$, $A_1(q)D_6(q)$, $A_7^{\e}(q)$, $A_1(q)F_4(q)$, $E_7(q_0)$ with $q=q_0^r$ for $r=2,3$, or $Fi_{22}$ for $q=2$.
\end{proposition}

\begin{proof}
    We proceed as in the proof of the previous proposition. If $|H|<q^{43}$, then $|H|^3<|G|$, so to complete the analysis of this case we may assume that $|H|\geq q^{43}$. We now apply Theorem \ref{thm:ls}.
    
    By inspecting \cite{b:LS-Ex-Mem-AMS} and \cite[Table 1]{a:CLS-Local-92}, it is easy to check that the only examples of type (i) are $N_G(\bar{M}_{\sigma})$ with $\bar{M} = T_1E_6.2$, $A_1D_6.2$, $A_7.2$ or $A_1F_4$. Next suppose $H$ is a subfield subgroup of type $E_7(q_0)$ with $q=q_0^r$. If $r \geq 5$, then $H$ is non-large. The cases (ii) and (iii) do not arise.
    
    To complete the analysis, let us assume $H$ is almost simple, and not of type (i) or (iv). Let $H_0$ denote the socle of $H$. If $H_0 \not\in {\rm Lie}(p)$ then by inspecting \cite[Tables 10.1--10.4]{a:LS1999}, we deduce that $H_{0}$ can be $Fi_{22}$ for $q=2$. Finally, we may assume $H_0 \in {\rm Lie}(p)$ and ${\rm rk}(H_0) \leq 3$ (see Remark \ref{rem:as}(ii)). Here \cite[Theorem 1.2(ii)]{a:LieSha-Probablity-1995} states that $|H| < 4q^{30}\log_pq$, and thus $H$ is non-large.
\end{proof}

\begin{proposition}\label{prop:e8}
    Let $H$ be a maximal non-parabolic subgroup of $G=E_8(q)$. Then $H$ is large if and only if $H$ is of type $A_1(q)E_7(q)$, $D_8(q)$, $A_2^{\e}(q)E_{6}^{\e}(q)$ or $E_8(q_0)$ with $q=q_0^r$ for $r=2,3$.
\end{proposition}
\begin{proof}
    Clearly, if $|H| \leq q^{81}$, then $H$ is non-large, so it remains to consider the maximal subgroups $H$ that satisfy the bounds $|H|>q^{81}$. By Theorem \ref{thm:ls}, $H$ is of type (i)--(v).
    
    If $H$ is of type (i) of maximal rank, then by \cite[Table 5.1]{a:LS-LargeRank-1992}, the only possibilities for $H$ are $D_{8}(q)$, $A_{1}(q)E_{7}(q)$ and $A_{2}^{\e}(q)E_{6}^{\e}(q)$. For other cases in type (i), $|H|$ is in the desired range~\eqref{eq:large}. The possibilities in (ii) are recorded in \cite[Table 1]{a:CLS-Local-92}; either $|H|=2^{15}|{\rm SL}_{5}(2)|$, or $|H| = 5^3|{\rm SL}_{3}(5)|$ (both with $q$ odd). In both cases, $H$ is non-large. Clearly, we can eliminate subgroups of type (iii), and a straightforward calculation shows that a subfield subgroup $H=E_8(q_0)$ with $q=q_0^r$, $r$ prime is large only if $r=2,3$.
    
    Finally, let us assume $H$ is almost simple, and not of type (i) or (iv). Let $H_0$ denote the socle of $H$, and recall that ${\rm Lie}(p)$ is the set of finite simple groups of Lie type in characteristic $p$. First assume that $H_0 \not\in {\rm Lie}(p)$, in which case the  possibilities for $H_0$ are listed in \cite[Tables 10.1--10.4]{a:LS1999}. If $H_0$ is an alternating or sporadic group then $|H| \leq |{\rm Th}|$ and thus $H$ is non-large. Similarly, if $H$ is a group of Lie type then we get $|H| \leq |{\rm PGL}_{4}(5)|2$, and again we deduce that $H$ is non-large. Finally, suppose $H_0 \in {\rm Lie}(p)$. By Remark \ref{rem:as}(c) we have ${\rm rk}(H_0) \leq 4$, so $|H|<12q^{56}\log_pq$ by \cite[Theorem 1.2(i)]{a:LieSha-Probablity-1995}, and thus $H$ is non-large.
\end{proof}

\noindent {\bf Proof of Theorem~\ref{thm:large-ex}}\quad 
The proof follows immediately from Propositions~\ref{prop:ex1} and \ref{prop:f4}-\ref{prop:e8}.

\section{Proof of the main result}\label{sec:proof}
In this section, we prove Theorem~\ref{thm:main} and Corollaries~\ref{cor:main-1} and~\ref{cor:main-2}. Since our arguments in many cases are similar, we here introduce our method in Subsection~\ref{sec:method} below. However, in some cases, we rather prefer to be precise in giving more details of our proof, see for example Proposition~\ref{prop:parab}.

\subsection{Methodology}\label{sec:method}

Suppose that $G$ is a flag-transitive and point-primitive group. Then by Corollary~\ref{cor:large}, the point stabiliser $H:=G_{\alpha}$ is a large maximal subgroup of $G$, where $\alpha$ is a point of $\Dmc$. Therefore, by Theorem~\ref{thm:large-ex}, the subgroup $H$ is a parabolic subgroup or $H\cap X$ is (isomorphic to) one of the subgroups listed in Table~\ref{tbl:large-exc-nonpar}. Moreover, by Lemma~\ref{lem:New},
\begin{align}
    v=\frac{|X|}{|H\cap X|}.\label{eq:v}
\end{align}

For the subgroups $H$ with small values of $q$,  we simply use the  and Remark~\ref{rem:alg} to see if these subgroups  give rise to  possible parameters set $(v,k,\lambda)$. For the remaining subgroups $H$, we first note by Lemma~\ref{lem:six}(b) that $k$ divides $|H|$, and hence  Lemma~\ref{lem:New} implies that
\begin{align}\label{eq:k-g}
    k \text{ divides } b\cdot g(q),
\end{align}
where $b$ is a multiple of $|\Out(X)|$ and $g(q)$ is a polynomial obtained from $|H|$. We note here that in most cases, $g(q)$ is equal to $|H\cap X|$. We know the parameter $v$ by \eqref{eq:v}, and we next apply Lemma~\ref{lem:six}(a) and (b) and conclude that $k$ divides $\lambda f(q)\cdot |\Out(X)|$, where $f(q)$ is a polynomial which is multiple of $\gcd(v-1,g(q))$. If $H$ is not a parabolic subgroup, then in order to obtain $f(q)$, we also use Tits' Lemma~\ref{lem:Tits} saying that $v-1$ is coprime to $p$. Moreover, in the cases where  there are some suitable subdegrees of $G$, we can find a multiple $f(q)$ of the greatest common divisors of $v-1$ and these subdegrees, and then by Lemma~\ref{lem:six}(a) and (c), the parameter $k$ divides $\lambda f(q)$. Therefore, $k$ divides $b_{1}\lambda f(q)$, for some positive integer $b_{1}$ which is mostly  $1$ or a multiple of $|\Out(X)|$. Hence
\begin{align}\label{eq:k-f}
    mk=b_{1}\lambda f(q),
\end{align}
for some positive integer $m$. Since $\lambda <k$, we conclude that
\begin{align}\label{eq:m}
    m<b_{1}\cdot f(q),
\end{align}
Again, by Lemma~\ref{lem:six}(a) and the fact that $mk=b_{1}\lambda f(q)$, we find parameters $k$ and $\lambda$ in terms of $m$, $b_{1}$ and $q$ as below:
\begin{align}\label{eq:k-lam}
    k = \frac{m\cdot (v-1)}{b_{1}\cdot f(q)}+1 \text{ and } \lambda=\frac{m^{2}\cdot (v-1)+mb_{1}\cdot f(q)}{b_{1}^{2}\cdot f(q)^{2}},
\end{align}
Therefore, \eqref{eq:k-g} and \eqref{eq:k-lam} imply that
\begin{align}\label{eq:k-1}
    m\cdot (v-1)+b_{1}\cdot f(q) \text{ divides } b_{1}b\cdot f(q)g(q).
\end{align}
and hence
\begin{align}\label{eq:k-2}
    m\cdot (v-1)+b_{1}\cdot f(q) \leq  b_{1}b\cdot f(q)g(q).
\end{align}
Since $m\geq 1$, we must have
\begin{align}\label{eq:k-3}
    (v-1)+b_{1}\cdot f(q) \leq  b_{1}b\cdot f(q)g(q).
\end{align}
We now check if \eqref{eq:k-3} holds.  Let now this inequality hold for almost all $q$. Suppose that $v-1=v_{1}(q)/c$, where $c$ is a positive integer and $v_{1}(q)$ is a polynomial with integer coefficient in terms $q$.
Then we use Euclidian algorithm and obtain polynomials $h(q)$ and $r(q)$ such that
\begin{align}\label{eq:g-h-d}
    f(q)g(q)=h(q)\cdot v_{1}(q)+r(q),
\end{align}
and then $mb_{1}b\cdot f(q)g(q)=b_{1}bc\cdot  h(q)[m\cdot (v-1)+b_{1}\cdot f(q)]+F(m,q,b,b_{1},c)$, where
\begin{align}\label{eq:F}
    F(m,q,b,b_{1},c)=mb_{1}b\cdot r(q)-b_{1}^{2}bc\cdot f(q)h(q).
\end{align}
We always have to take case of those possible values of $q$ for which $F(m,q,b,b_{1},c)=0$.

Recall by \eqref{eq:k-g} that $k$ divides $b\cdot g(q)$, and so by \eqref{eq:k-lam}, then we conclude that $m\cdot (v-1)+b_{1}\cdot f(q)$ divides $mb_{1}b\cdot f(q)g(q)$, and  if $F(m,q,b,b_{1},c)\neq 0$, then
\begin{align}\label{eq:v-F-1}
    \text{$m\cdot (v-1)+b_{1}\cdot f(q)$ divide $|F(m,q,b,b_{1},c)|$},
\end{align}
and hence, since $m\geq 1$, we have that
\begin{align}\label{eq:v-F-2}
    v< b_{1}b\cdot (|r(q)|+b_{1}c\cdot |f(q)h(q)|).
\end{align}
We then obtain possible $q=p^{a}$ satisfying this inequality, and so for such values of $q$, we can obtain possible parameters set $(v,k,\lambda)$ and check if these parameters give rise to possible designs.

If, in particular, $f(q)$ divides $v_{1}(q)$,  where $v-1=v_{1}(q)/c$, then by taking $w(q)=v_{1}(q)/f(q)$, we have that
\begin{align}\label{eq:w-1}
    m\cdot w(q)+b_{1}c \text{ divides } b_{1}bc\cdot g(q),
\end{align}
and hence
\begin{align}\label{eq:w-2}
    w(q) \leq  b_{1}bc \cdot g(q).
\end{align}
If \eqref{eq:w-2} is true, then we obtain $h(q)$ and $r(q)$ such that
$g(q)=h(q)w(q)+r(q)$, and so $mb_{1}bc \cdot g(q)=b_{1}bc\cdot h(q)[m\cdot w(q)+b_{1}c]+G(m,q,b,b_{1},c)$, where
\begin{align}\label{eq:G}
    G(m,q,b,b_{1},c)=mb_{1}bc\cdot r(q)-b_{1}^{2}b c^{2}\cdot h(q).
\end{align}
Then by the same manner as above, we first consider the possible solutions for $G(m,q,b,b_{1},c)=0$, and next consider the fact that
\begin{align}\label{eq:G-1}
    \text{$mw(q)+b_{1}c$ divides $|G(m,q,b,b_{1},c)|$},
\end{align}
and so we must have
\begin{align}\label{eq:G-2}
    w(q)\leq b_{1}bc\cdot (|r(q)|+b_{1}c\cdot |h(q)|).
\end{align}

We can now summarise our method in the following steps:
\begin{description}
    \item[Step 1] Consider the subgroup $H$ as a parabolic subgroup of $G$ or one of the subgroup listed in Table~\ref{tbl:large-exc-nonpar}, and consider the subgroup $H\cap X$, and then obtain $v$ by \eqref{eq:v}; 
    \item[Step 2] Obtain $|H|$, and determine the polynomial $g(q)$ and the parameter $b$ such that $|H|$ divides $bg(q)$, where $b$ is a multiple of $|\Out(X)|$;
    \item[Step 3] Obtain the polynomial $f(q)$ satisfying \eqref{eq:k-f}. This can be done by determining the greatest common divisors of $v-1$ and $g(q)$. If we have some subdegrees of $G$, we can find a multiple $f(q)$ of the greatest common divisors of $v-1$ and these subdegrees;
    \item[Step 4]  Check the solution of the inequality \eqref{eq:k-3}. If we obtain a finite number of possibilities of $q$ satisfying \eqref{eq:k-3}, then we apply Lemma~\ref{lem:comp}  and Remark~\ref{rem:alg} to obtain possible parameters set $(v,k,\lambda)$;
    \item[Step 5] In the case where, \eqref{eq:k-3} holds for almost all $q$, we obtain $h(q)$ and $r(q)$ satisfying \eqref{eq:g-h-d}, and determine $F(m,q,b,b_{1},c)$ as in \eqref{eq:F}, and then obtain possible $q$ satisfying $F(m,q,b,b_{1},c)=0$ or \eqref{eq:v-F-2}. If $f(q)$ divides $v-1$, then we determine  $G(m,q,b,b_{1},c)$ as in \eqref{eq:G}, and then obtain possible $q$ satisfying $G(m,q,b,b_{1},c)=0$ or \eqref{eq:G-2};
    \item[Step 6] For those values of $q$ obtained in Step 5,  by Lemma~\ref{lem:comp} and Remark~\ref{rem:alg}, we obtain possible parameters set $(v,k,\lambda)$. At this stage, we sometimes obtain $m$ from \eqref{eq:m}, and check if \eqref{eq:v-F-1} or \eqref{eq:G-1}holds.
\end{description}

\subsection{Parabolic, subfield and some numerical cases}\label{sec:prab}

In this section, we deal with the case where $H:=G_{\alpha}$ is a maximal parabolic subgroup or subfield subgroup of $G$ or $H\cap X$ is one of the subgroups listed in Table~\ref{tbl:num}. Note that the cases $(X,H)=(E_{7}(2),A_{7}^{\e}(2))$, $(E_{7}(2),E_{6}^{-}(2))$, $(E_{6}^{-},D_{5}^{-}(2))$, and $(F_{4}(2),D_{4}(2))$ are included in Table~\ref{tbl:num}. In what follows by \cite{a:Zhou-exp}, we only need to consider the case where $X$ is of type $G_{2}$, $F_{4}$, $E_{6}^{\e}$, $E_{7}$ or $E_{8}$.

\begin{proposition}\label{prop:num}
    If $X$ and $H\cap X$ are as in {\rm Table~\ref{tbl:num}}, then there is no symmetric design admitting $G$ as its flag-transitive and point-primitive automorphism group.
\end{proposition}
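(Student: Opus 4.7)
The plan is to handle each row of Table~\ref{tbl:num} by a case-by-case computational argument combining the necessary conditions from Lemma~\ref{lem:six} and Lemma~\ref{lem:comp} with the subdegree information supplied by Theorem~\ref{thm:martin} and Proposition~\ref{prop:g2sl3}. For each listed pair $(X,X\cap H)$, the value $v=|X|/|X\cap H|$ is explicit (third column of the table), and the fourth column records an explicit divisor bound for $k$ obtained as follows: by Lemma~\ref{lem:six}(e), $k$ divides $\lambda d$ for every subdegree $d$ of the action of $G$ on the cosets of $H$; choosing for $d$ the smallest subdegree produced by the $|H:K|$ bound of Theorem~\ref{thm:martin} (or Proposition~\ref{prop:g2sl3} when $X=G_2(q)$ and $X\cap H=\mathrm{SL}_3(q){:}2$), and combining with Lemma~\ref{lem:six}(d), $k\mid\gcd(\lambda(v-1),|H|)$, gives the stated divisor.

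Second, I would feed each tuple $(q,a,v,d)$ with $d$ the value from the last column into Algorithm~\ref{alg} implemented in \textsf{GAP}. The algorithm enumerates divisors $k_2$ of $d$, candidate values $\lambda_1<k_2/2$, and tests the integrality, coprimality, and divisibility conditions of Lemma~\ref{lem:comp} together with $k\mid |\Out(X)|\cdot|X\cap H|$, $\lambda<k\leq v/2$, and $k(k-1)=\lambda(v-1)$. Any surviving tuple would then be tested against the square condition $4\lambda(v-1)+1\in\mathbb{Z}^2$ (Lemma~\ref{lem:six}(b)) and the Fisher-type bound $\lambda v<k^2$ (Lemma~\ref{lem:six}(c)). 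The claim is that the output of Algorithm~\ref{alg} augmented by these two post-checks is empty for every row of the table.

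For those rows in Table~\ref{tbl:num} falling into the exceptional cases of Remark~\ref{rem:martin}, namely $(E_7(2),A_7^\pm(2))$, $(E_7(2),E_6^-(2))$, $(E_6^-(2),D_5^-(2))$, and $(F_4(2),D_4(2))$, Theorem~\ref{thm:martin} does not directly apply, so subdegrees must either be extracted from the literature or computed directly. For the groups of small order (the ${}^2B_2(8)$, ${}^3D_4(2)$, $G_2(3)$, $G_2(4)$, $G_2(5)$, $G_2(7)$, $G_2(11)$, and most $F_4(2)$ rows), the permutation representation on the cosets of $H$ is small enough that the suborbit structure can be obtained in \textsf{GAP} using stored Atlas representations and then fed into Lemma~\ref{lem:six}(e) directly to sharpen the divisor bound for $k$.

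The main obstacle is the two ${}^2F_4(8)$ rows together with the $Fi_{22}$, $A_7^\pm(2)$, and $E_6^-(2)$ rows for $E_7(2)$, where $v$ is of order $10^{15}$ up to $10^{26}$ and direct enumeration of the permutation action is infeasible. In these rows the argument must be purely structural: the subgroup divisor bound $|H:K|$ from Theorem~\ref{thm:martin} together with $k\mid|\Out(X)|\cdot|X\cap H|$ already forces $d:=\gcd(v-1,|\Out(X)|\cdot|X\cap H|)$ to be of manageable size, and Algorithm~\ref{alg} sieves through the divisors of this $d$ quickly. The delicate point is to verify by hand that for each of these high-$v$ rows every candidate $(k,\lambda)$ fails either the square test of Lemma~\ref{lem:six}(b) or the refined divisibility $k\mid\lambda\cdot|H:K|$ for at least one choice of $K$ listed in Table~\ref{tbl:subdegree}; taking two independent choices of $K$ (where available) essentially forces $k\mid\lambda\cdot\gcd_K|H:K|$, which in every case leaves no room between $\lambda$ and $k$.
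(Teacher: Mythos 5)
Your second paragraph already contains the only engine that is actually needed, and it is all the paper uses: by Lemma~\ref{lem:six}(c) together with Lemma~\ref{lem:New}(b), $k$ divides $|\Out(X)|\cdot|X\cap H|$ (this, not any subdegree, is what the fourth column of Table~\ref{tbl:num} records), and one then checks that for no divisor $k$ of that number is $\lambda=k(k-1)/(v-1)$ a positive integer. This is a finite divisor enumeration whose cost is governed by the number of divisors of $|X\cap H|$, not by $v$, so the rows you single out as the main obstacle (${}^2F_4(8)$, the $Fi_{22}$, $A_7^{\pm}(2)$ and $E_6^-(2)$ rows for $E_7(2)$) are no harder than the others and need no structural input, no square test, and no refined subdegree divisibility.

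The genuine gap is that you hang the proof on subdegree data from Theorem~\ref{thm:martin} and Proposition~\ref{prop:g2sl3} that simply does not exist for these rows. Table~\ref{tbl:num} consists of local, sporadic, alternating and cross-characteristic subgroups ($C_{13}{:}4$, $7^2{:}SL_2(3)$, $SU_3(8){:}2$, $J_2$, $Alt_9$, $Fi_{22}$, $J_3$, \ldots) which are not of the maximal-rank types listed in Table~\ref{tbl:subdegree}, nor of type $SL_3(q).2<G_2(q)$; and the four rows that are of those types, $(E_7(2),A_7^{\pm}(2))$, $(E_7(2),E_6^{-}(2))$, $(E_6^{-}(2),D_5^{-}(2))$ and $(F_4(2),D_4(2))$, are precisely the excluded cases of Remark~\ref{rem:martin} which the paper shunts into Table~\ref{tbl:num} in order to avoid invoking Theorem~\ref{thm:martin} at all. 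Consequently the ``purely structural'' argument you sketch for the high-$v$ rows rests on $|H:K|$ bounds that are not available, and its conclusion (``every candidate fails the square test or $k\mid\lambda|H:K|$'') is asserted, not verified; as written, your proof fails exactly on the rows you flag as delicate. The proposed GAP computation of suborbit structures is also not realistic for rows such as $G_2(11)$ (degree about $2\times 10^{9}$) or $F_4(2)$ on the cosets of $Alt_9$ (degree about $1.8\times 10^{10}$), and is in any case unnecessary. If you strip out the subdegree layer and feed Algorithm~\ref{alg} with $d=\gcd(v-1,|\Out(X)|\cdot|X\cap H|)$ (not the raw fourth-column value, since the algorithm assumes $k_2\mid v-1$), what remains is exactly the paper's one-line verification that $k(k-1)=\lambda(v-1)$ has no admissible solution.
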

\begin{proof}
    If $X$ and $H\cap X$  are as in Table~\ref{tbl:num}, then by \eqref{eq:v} and Lemma~\ref{lem:six}, the parameters  $v$ and $k$ are as in the third and fourth columns of Table~\ref{tbl:num}, respectively. For each value of $v$ and $k$, the equality $k(k-1)=\lambda(v-1)$ does not hold for any positive integer $\lambda$.
\end{proof}

\begin{table}
    \centering
    \scriptsize
    \caption{Parameter $v$ and possible parameter $k$ for some small $q$ in Proposition~\ref{prop:num}}\label{tbl:num}
    \begin{tabular}{llll}
        \hline\noalign{\smallskip}
        \multicolumn{1}{c}{$X$} &
        \multicolumn{1}{c}{$H\cap X$} &
        \multicolumn{1}{c}{$v$} &
        \multicolumn{1}{c}{$k$ divides} \\
        \noalign{\smallskip}\hline\noalign{\smallskip}
        %
        %
        %
        $G_2(3)$ & $2^{3}{\cdot}A_{2}(2)$ & $3528$ & $192$ \\
        $G_2(4)$ & $A_{1}(13)$ & $230400$ & $2184$\\
        & $J_{2}$ & $416$ & $1209600$ \\
        $G_2(5)$ & $G_2(2)$ & $484375$ & $12096$ \\
        & $2^{3}{\cdot}A_{2}(2)$ & $4359375$ & $1344$\\
        $G_2(7)$ & $G_2(2)$ & $54925276$ & $12096$ \\
        $G_2(11)$ & $J_{1}$ & $2145199320$ & $175560$\\
        $F_4(2)$  & ${}^3\!D_4(2)$ & $15667200$ & $422682624$\\
        & $D_{4}(2)$ & $3168256$ & $1045094400$ \\
        & $Alt_{9}$ & $18249154560$ & $362880$ \\
        & $Alt_{10}$ & $1824915456$& $3628800$\\
        & $A_3(3){\cdot} 2$ & $272957440$ & $24261120$ \\
        & $J_{2}$ & $2737373184$ & $1209600$ \\
        & $(Sym_6 \wr Sym_2){\cdot}2$ & $3193602048$ & $2073600$ \\
        $E_7(2)$ & $Fi_{22}$ & $123873281581429293827751936$ & $64561751654400$ \\
        & $A_{7}^{+}(2)$ & $373849134340755161088$ & $21392255076846796800$\\
        & $A_{7}^{-}(2)$ & $268914162119825424384$ &$29739884203317657600$  \\
        & $E_{6}^{-}(2)$ & $2488042946297856$& $3214364146718543865446400$\\
        $E_6^{-}(2)$& $J_{3}$ &$253925177425920$ & $301397760$ \\
        & $Alt_{12}$ & $319549996007424$ & $1437004800$ \\
        & $B_{3}(3){:}2$ & $16690645565440$ & $55024220160$ \\
        & $Fi_{22}$ & $1185415168$ & $387370509926400$\\
        & $D_{5}^{-}(2)$ & $1019805696$ & $75046138675200$\\
        \noalign{\smallskip}\hline
    \end{tabular}
\end{table}

\begin{proposition}\label{prop:subf}
    If $H\cap X$ is a subfield subgroup of $X$, then there is no symmetric design with $G$ as its flag-transitive and point-primitive  automorphism group.
\end{proposition}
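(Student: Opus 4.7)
The plan is to follow the pattern of Proposition~\ref{prop:prab}. By Theorem~\ref{thm:large-ex}, and after discarding the small-$q$ cases already settled in Proposition~\ref{prop:num}, every remaining large maximal subfield subgroup $H$ satisfies $X\cap H = Y(q_{0})$, where $Y$ is of the same Lie type as $X$ (possibly twisted) and $q = q_{0}^{r}$ with $r\in\{2,3\}$ prime. Since a subfield subgroup is never parabolic, Lemma~\ref{lem:Tits} applies and yields the crucial numerical constraint $\gcd(p,v-1)=1$.

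First I would write $v = |X|/|X\cap H|$ as a product of cyclotomic factors $\Phi_{d}(q_{0})$, using standard identities such as $\Phi_{d}(q_{0}^{2})=\Phi_{d}(q_{0})\Phi_{2d}(q_{0})$ for $d$ odd and $\Phi_{d}(q_{0}^{2})=\Phi_{2d}(q_{0})$ for $d$ even, together with their cubic analogues. The outcome is that $v$ picks up precisely those ``new'' cyclotomic factors $\Phi_{e}(q_{0})$ which divide $|X(q_{0}^{r})|$ but not $|Y(q_{0})|$; consequently the Zsigmondy primes of $q_{0}^{e}-1$ coming from such $e$ divide $v$ but not $v-1$ and not $|H|$. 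Combined with Lemma~\ref{lem:six}(c),(d) and Lemma~\ref{lem:comp}, this cuts the admissible $k$ down to a very short list of divisors of $\gcd(v-1,\,|\Out(X)|\cdot|Y(q_{0})|)$ built only from the small cyclotomic pieces of $|Y(q_{0})|$.

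The two values of $r$ are then handled separately. For $r = 3$, the dimension estimate $|Y(q_{0})|\sim q_{0}^{\dim\mathfrak{G}}$ gives $|H|^{2}\sim v$, so $k\leq|H|$ forces $\lambda = k(k-1)/(v-1)<2$; hence $\lambda=1$, and the projective-plane relation $v=k^{2}-k+1$ with $k\mid|H|$ is ruled out case-by-case using the cyclotomic factorisation of $v-1$ described above. For $r = 2$ the orders $|H|$ and $v$ are comparable, so the $k\leq|H|$ bound alone is inconclusive; here I would pass the quadruples $(q_{0},a,v,d)$ to Algorithm~\ref{alg} with $d$ the reduced value just described, and verify directly that none of the few resulting candidate parameter triples survives Lemma~\ref{lem:six}(b) and Lemma~\ref{lem:comp}(c). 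The principal obstacle is the bookkeeping volume: the analysis must be carried out uniformly across every subfield entry in Table~\ref{tbl:large-exc-nonpar}, tracking which cyclotomic factors of $|X(q)|$ end up in $v$, in $v-1$, and in $|H|$, but no conceptually new ingredient beyond those developed for Proposition~\ref{prop:prab} is needed.
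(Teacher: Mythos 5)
Your overall shape (split by $r=3$ versus $r=2$, with the squarefield case needing computation) matches the paper's, but two steps do not hold as written.

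First, the $r=3$ reduction to $\lambda=1$ is not valid. From Lemma~\ref{lem:six}(c) you only get $k\mid |G_{\alpha}|$, and by Lemma~\ref{lem:New}(b) $|G_{\alpha}|$ can be as large as $|\Out(X)|\cdot|X\cap H|$; since for $r=3$ one has $v\approx |X\cap H|^{2}$, the bound $k\leq |G_{\alpha}|$ only gives $\lambda<k^{2}/(v-1)\lesssim |\Out(X)|^{2}$, which for $q=q_{0}^{3}=p^{3b}$ can be of order $b^{2}$ (e.g.\ up to $(2\cdot\gcd(3,q-\e1)\cdot a)^{2}$ for $E_{6}^{\e}$), not $<2$. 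So the projective-plane case $\lambda=1$ is not forced and your case-by-case elimination does not get started. The paper avoids this entirely: it writes $k=m(v-1)/\bigl(f(q_{0})|\Out(X)|\bigr)+1$ with $f(q_{0})$ the prime-to-$p$ part of $\gcd(v-1,|X\cap H|)$ (here your Zsigmondy/cyclotomic observation is exactly what makes $f(q_{0})$ small), and then the single crude inequality $v-1\leq f(q_{0})|\Out(X)|^{2}|X\cap H|$ already fails for every $r=3$ entry of Table~\ref{tbl:subf-v}, with no need to pin down $\lambda$.

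Second, for $r=2$ you propose to ``pass the quadruples $(q_{0},a,v,d)$ to Algorithm~\ref{alg}'', but Algorithm~\ref{alg} needs a \emph{finite} input list, and nothing in your argument bounds $q_{0}$: the cyclotomic bookkeeping restricts which factors $k$ can use, but for each of the infinitely many $q_{0}$ there remain admissible divisors, so the computation never terminates as described. The missing ingredient is the quantitative step the paper performs: from $m(v-1)+f(q_{0})|\Out(X)|\mid f(q_{0})|\Out(X)|^{2}|X\cap H|$ one multiplies by a suitable polynomial $h(q_{0})$ and subtracts $m$ times the right-hand side to obtain $v-1<|\Out(X)|^{2}\bigl[d(q_{0})+f(q_{0})h(q_{0})|\Out(X)|\bigr]$, which forces $q_{0}=p^{b}$ into the short explicit lists of Table~\ref{tbl:subf-p-a}; only then does Algorithm~\ref{alg} finish the job. (A minor further imprecision: $k$ divides $\lambda\cdot\gcd(v-1,|\Out(X)||X\cap H|)$, not $\gcd(v-1,|\Out(X)||X\cap H|)$ itself, so the ``short list of divisors'' you describe must carry the factor $\lambda$ along, as in Lemma~\ref{lem:comp}.)
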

\begin{proof}
    Let $X:=X(q_{0}^{r})$ and $H\cap X$ be $X(q_{0})$ with $q=q_{0}^{r}$ and $r$ prime. Then
    by Theorem~\ref{thm:large-ex}, $X$ and $H\cap X$ are as in Tables \ref{tbl:subf-r-2} and \ref{tbl:subf-r-3}, and so by \eqref{eq:v}, we obtain $v$ for the corresponding $X$ and $H\cap X$ as in the same tables. We now follow the steps introduced in Subsection~\ref{sec:method} with replacing $q$ by $q_{0}$ in the statements of Steps 1-6. Note by Lemma~\ref{lem:Tits} that $\gcd(v-1,p)=1$. Therefore, for each $(X,H\cap X)$, we find the polynomial $f(q_{0})$. Let $g(q_{0})=|H\cap X|=|X(q_{0})|$ and $b=b_{1}=|\Out(X)|$. Then by \eqref{eq:k-3},  we must have $v-1 \leq f(q_{0})|\Out (X)|^{2}|H\cap X|$, but this inequality does not hold for the possibilities listed in Table~\ref{tbl:subf-r-3} where $q=q_{0}^{3}$. For each remaining cases recorded in Table~\ref{tbl:subf-r-2}, we assume that $h(q_{0})$ is as the same table, and set $r(q_{0})=(v-1)h(q_{0})-f(q_{0})g(q_{0})$. Therefore, for each $(X,H\cap X)$, the inequality \eqref{eq:v-F-2} holds  for $(p,a)$ as in Table \ref{tbl:subf-r-2}. These possible cases can be ruled out by applying Lemma~\ref{lem:comp} and Remark~\ref{rem:alg}.
\end{proof}

\begin{table}
    \centering
    \caption{The parameters and polynomials for the cases where $X=X(q)$ and $H\cap X=X(q_{0})$ is a subfield subgroup with $q=q_{0}^{3}=p^{3t}$.}\label{tbl:subf-r-3}
    \scriptsize
    \resizebox{\textwidth}{!}{
        \begin{tabular}{p{1.5cm}p{4.2cm}llp{4.2cm}}
            \noalign{\smallskip}\cline{1-2}\cline{4-5}\noalign{\smallskip}
            $X=G_{2}(q)$ & $H\cap X=G_{2}(q_{0})$   & &
            $X=F_{4}(q)$ & $H\cap X=F_{4}(q_{0})$   \\
            \noalign{\smallskip}\cline{1-2}\cline{4-5}\noalign{\smallskip}
            $v$ & $q_{0}^{12} \Phi_{3} \Phi_{6} \Phi_{9} \Phi_{18}$  &&
            $v$ &  $q_{0}^{48} \Phi_{3}^2 \Phi_{6}^2 \Phi_{12} \Phi_{9}^2
            \Phi_{18}^2 \Phi_{24}\Phi_{36}$\\
            $f(q_{0})$ & $(q_{0}^2-1)^{2}$ &&
            $f(q_{0})$ & $\Phi_{1}^4 \Phi_{2}^{4} \Phi_{4}^2\Phi_{12}$  \\
            \noalign{\smallskip}\cline{1-2}\cline{4-5}\noalign{\smallskip}
            $X=E_{6}(q)$ & $H\cap X=E_{6}(q_{0})$  & &
            $X=E_{6}^{-}(q)$ & $H\cap X=E_{6}^{-}(q_{0})$  \\
            \noalign{\smallskip}\cline{1-2}\cline{4-5}\noalign{\smallskip}
            $v$ &  $q_{0}^{72} \Phi_{3}^3\Phi_{6}^2\Phi_{9}^2\Phi_{18}^2
            \Phi_{12}\Phi_{15} \Phi_{24}\Phi_{27}\Phi_{36}$ &&
            $v$ & $q_{0}^{72} \Phi_{3}^2 \Phi_{6}^3\Phi_{9}^2 \Phi_{12}
            \Phi_{18}^2 \Phi_{24} \Phi_{30}\Phi_{36}\Phi_{54}$ \\
            $f(q_{0})$ & $\Phi_{1}^6 \Phi_{2}^4 \Phi_{5} \Phi_{4}^2$ $ \Phi_{8}$ &&
            $f(q_{0})$ &  $\Phi_{1}^4 \Phi_{2}^6 \Phi_{10} \Phi_{4}^2 $ $\Phi_{8}$ \\
            \noalign{\smallskip}\cline{1-2}\cline{4-5}\noalign{\smallskip}
            $X=E_{7}(q)$ & $H\cap X=E_{7}(q_{0})$  & &
            $X=E_{8}(q)$ & $H\cap X=E_{8}(q_{0})$   \\
            \noalign{\smallskip}\cline{1-2}\cline{4-5}\noalign{\smallskip}
            $v$ &$q_{0}^{126} \Phi_{3}^4\Phi_{6}^4\Phi_{9}^2
            \Phi_{12}\Phi_{15}\Phi_{18}^2 \Phi_{27} \Phi_{21}$ $
            \Phi_{24}\Phi_{30}\Phi_{36}\Phi_{42}\Phi_{54}$  &&
            $v$ & $q_{0}^{240} \Phi_{3}^4 \Phi_{6}^4 \Phi_{9}^3 \Phi_{12}^2
            \Phi_{15} \Phi_{18}^3 \Phi_{21} \Phi_{24} \Phi_{27}$ $
            \Phi_{30} \Phi_{36}^2\Phi_{42} \Phi_{45} \Phi_{54}
            \Phi_{60} \Phi_{72} \Phi_{90}$ \\
            $f(q_{0})$ & $\Phi_{1}^7\Phi_{2}^7\Phi_{4}^2\Phi_{5}\Phi_{7}\Phi_{8}\Phi_{10}\Phi_{14}$  &&
            $f(q_{0})$ & $\Phi_{1}^8 \Phi_{2}^8 \Phi_{4}^4 \Phi_{5}^2\Phi_{7}        \Phi_{8}^2\Phi_{10}^2 \Phi_{14} \Phi_{20}$  \\
            \noalign{\smallskip}\cline{1-2}\cline{4-5}\noalign{\smallskip}
            Comments: & \multicolumn{4}{p{10cm}}{$q=q_{0}^{3}=p^{3t}$ and $\Phi_{n}:=\Phi_{n}(q_0)$ is the $n$-th cyclotomic polynomial in terms of $q_0$.}
        \end{tabular}
    }
\end{table}

\begin{table}
    \centering
    \caption{The parameters and polynomials for the cases where $X=X(q)$ and $H\cap X=X(q_{0})$ is a subfield subgroup  with $q=q_{0}^{2}=p^{2t}$.}\label{tbl:subf-r-2}
    \scriptsize
    \resizebox{\textwidth}{!}{
        \begin{tabular}{lp{11cm}}
            \noalign{\smallskip}\hline\noalign{\smallskip}
            $X=G_{2}(q)$ &
            $H\cap X=G_{2}(q_{0})$ \\
            \noalign{\smallskip}\hline\noalign{\smallskip}
            $v$ & $q_{0}^6\Phi_{4}^{2} \Phi_{12}$ \\
            $f(q_{0})$ & $\gcd(3,q-\e1)^{2}\cdot (4q_{0}^2-1)$ \\
            $h(q_{0})$& $36q_{0}^2-81$  \\
            $q_{0}$  & $2^{t}$ with $t\leq 7$, $3^{t}$ with $t\leq 3$, $5^{t}$ with $t\leq 2$, $p$ with $p=7,\ldots,17$ \\
            \noalign{\smallskip}\hline\noalign{\smallskip}
            $X=F_{4}(q)$ & $H\cap X=F_{4}(q_{0})$ \\
            \noalign{\smallskip}\hline\noalign{\smallskip}
            $v$ & $q_{0}^{24} \Phi_{4}^{2} \Phi_{8}\Phi_{12}
            \Phi_{16}\Phi_{24}$  \\
            $f(q_{0})$ & $\Phi_{1}^{4} \Phi_{2}^{4} \Phi_{3}^{2} \Phi_{6}^{2}$ \\
            $h(q_{0})$& $q_{0}^{16}-4q_{0}^{14}+7q_{0}^{12}-12q_{0}^{10}+22q_{0}^{8}-28q_{0}^{6}+
            31q_{0}^{4}-36q_{0}^{2}+29$ \\
            $q_{0}$  & $2^{t}$ with $t\leq 5$, $3^{t}$ with $t\leq 2$, $p$ with $p=5,7$ \\
            \noalign{\smallskip}\hline\noalign{\smallskip}
            $X=E_{6}(q)$ &
            $H\cap X=E_{6}^{\e}(q_{0})$ with $\e=\pm$  \\
            \noalign{\smallskip}\hline\noalign{\smallskip}
            $v$ & $q_{0}^{36}\Phi_{2}^2\Phi_{4}^2 \Phi_{6}\Phi_{8}
            \Phi_{10}\Phi_{12}\Phi_{16} \Phi_{18} \Phi_{24}$ \\
            $f(q_{0})$ & $\Phi_{1}^6 \Phi_{5} \Phi_{3}^3 \Phi_{9}$  \\
            $h(q_{0})$& $q_{0}^{22}-\e2q_{0}^{21}-q_{0}^{20}+\e2q_{0}^{19}+4q_{0}^{18}
            -\e5q_{0}^{17}-3q_{0}^{16}+\e9q_{0}^{15}+q_{0}^{14}-\e15q_{0}^{13}+2q_{0}^{12}+\e22q_{0}^{11}-14
            q_{0}^{10}-\e25q_{0}^{9}+27q_{0}^{8}+\e27q_{0}^{7}-47q_{0}^{6}-\e13q_{0}^{5}
            +68q_{0}^{4}-\e8q_{0}^{3}-83q_{0}^{2}+\e44q_{0}+81$ \\
            $q_{0}$  & $2^{t}$ with $t\leq 20$, $3^{t}$ with $t\leq 9$, $p^{t}$ with $p=5,7$ and $t\leq 6$, $p^{t}$ with $p=11,13$ and $t\leq 4$, $19^{t}$ with $t\leq 3$, $p^{t}$ with $p=17,23, \ldots,113$ and $t\leq 2$, $p$ with $p=127,\ldots,3307$\\
            \noalign{\smallskip}\hline\noalign{\smallskip}
            $X=E_{7}(q)$ & $H\cap X=E_{7}(q_{0})$ \\
            \noalign{\smallskip}\hline\noalign{\smallskip}
            $v$ &  $q_{0}^{63} \Phi_{4}^5\Phi_{8} \Phi_{12}^2\Phi_{16}
            \Phi_{20} \Phi_{24} \Phi_{28} \Phi_{36}$ \\
            $f(q_{0})$ & $3\Phi_{1}^7 \Phi_{2}^7 \Phi_{3}^3 \Phi_{6}^3\Phi_{5}
            \Phi_{10} \Phi_{9} \Phi_{18}$ \\
            $h(q_{0})$& $3q_{0}^{46}-15q_{0}^{44}+33q_{0}^{42}-57q_{0}^{40}+102q_{0}^{38}-159
            q_{0}^{36}+222q_{0}^{34}-306q_{0}^{32}+396q_{0}^{30}-516q_{0}^{28}+681
            q_{0}^{26}-858q_{0}^{24}+1092q_{0}^{22}-1383q_{0}^{20}+1704q_{0}^{18}-
            2076q_{0}^{16}+2478q_{0}^{14}-2934q_{0}^{12}+3459q_{0}^{10}-4056q_{0}^{8}
            +4719q_{0}^{6}-5451q_{0}^{4}+6285q_{0}^{2}-7149$\\
            $q_{0}$  &  $2^{t}$ with $t\leq 9$, $3^{t}$ with $t\leq 6$, $5^{t}$ with $t\leq 4$, $7^{t}$ with $t\leq 3$, $p^{t}$ with $p=11,13,17$ and $t\leq 2$, $p$ with $p=19,\ldots,179$ \\
            \noalign{\smallskip}\hline\noalign{\smallskip}
            $X=E_{8}(q)$ & $H\cap X=E_{8}(q_{0})$ \\
            \noalign{\smallskip}\hline\noalign{\smallskip}
            $v$ & $q_{0}^{120} \Phi_{4}^4 \Phi_{8}^2\Phi_{12}^2\Phi_{16}^2
            \Phi_{20}\Phi_{24} \Phi_{28} \Phi_{36}\Phi_{40}
            \Phi_{48} \Phi_{60}$  \\
            $f(q_{0})$ & $\Phi_{1}^8 \Phi_{2}^8 \Phi_{3}^4 \Phi_{5}^2
            \Phi_{6}^4 \Phi_{7} \Phi_{9} \Phi_{14}\Phi_{10}^2
            \Phi_{15} \Phi_{18} \Phi_{30}$ \\
            $h(q_{0})$& $q_{0}^{88}-4q_{0}^{86}+7q_{0}^{84}-10q_{0}^{82}+14q_{0}^{80}
            -15q_{0}^{78}+13q_{0}^{76}-10q_{0}^{74}+9q_{0}^{72}-14q_{0}^{70}+22q_{0}^{68}-32
            q_{0}^{66}+39q_{0}^{64}-36q_{0}^{62}+20q_{0}^{60}+q_{0}^{58}-10q_{0}^{56}+
            4q_{0}^{54}+22q_{0}^{52}-58q_{0}^{50}+79q_{0}^{48}-66q_{0}^{46}+16q_{0}^{44}+42q_{0}^{42}
            -73q_{0}^{40}+46q_{0}^{38}+34q_{0}^{36}-132q_{0}^{34}+186q_{0}^{32}-139q_{0}^{30}+8q_{0}^{28}+140q_{0}^{26}-209q_{0}^{24
            }+136q_{0}^{22}+62q_{0}^{20}-278q_{0}^{18}+365q_{0}^{16}-234q_{0}^{14}
            -71q_{0}^{12}+377q_{0}^{10}-486q_{0}^{8}+290q_{0}^{6}+135q_{0}^{4}-548
            q_{0}^{2}+661$ \\
            $q_{0}$  & $2^{t}$ with $t\leq 7$, $3^{t}$ with $t\leq 3$, $5^{t}$ with $t\leq 2$, $p$ with $p=7,\ldots,17$  \\
            \noalign{\smallskip}\hline\noalign{\smallskip}
            Notation: &  $q=q_{0}^{2}=p^{2t}$ and $\Phi_{n}:=\Phi_{n}(q_0)$ is the $n$-th cyclotomic polynomial in terms of $q_0$.
        \end{tabular}
    }
\end{table}

\begin{table}
    \centering
    \caption{Some parameters for some parabolic subgroups of almost simple groups with socle $X$.}\label{tbl:parab}
    \resizebox{\textwidth}{!}{
        \begin{tabular}{llllll}
            \hline\noalign{\smallskip}
            \multicolumn{1}{l}{Line} &
            \multicolumn{1}{l}{$X$} &
            \multicolumn{1}{l}{$H\cap X$} &
            \multicolumn{1}{l}{$v$} &
            \multicolumn{1}{l}{$|v-1|_{p}$} &
            \multicolumn{1}{l}{Comments}
            \\
            \noalign{\smallskip}\hline\noalign{\smallskip}
            $1$  &
            $G_{2}(q)$&
            $P_{1}$  &
            $\Phi_{1}\Phi_{2}\Phi_{3}\Phi_{6}$  &
            $q$ &
            \\
            $2$ &
            $G_{2}(q)$&
            $P_{1,2}$  &
            $\Phi_{2}^{2}\Phi_{3}\Phi_{6}$ &
            $q$&
            $q=3^{a}$, $H$ contains graph automorphism \\
            $3$ &
            $F_{4}(q)$&
            $P_{1,4}$  &
            $\Phi_{2}^{2}\Phi_{3}^{2}\Phi_{4}\Phi_{6}^{2}\Phi_{8}\Phi_{12}$ &
            $2q$ &
            $q=2^{a}$, $H$ contains graph automorphism \\
            $4$ &
            $F_{4}(q)$&
            $P_{2,3}$  &
            $\Phi_{2}^{2}\Phi_{3}^{2}\Phi_{4}^{2}\Phi_{6}^{2}\Phi_{8}\Phi_{12}$&
            $2q$&
            $q=2^{a}$, $H$ contains graph automorphism \\
            $5$  &
            $E_{6}(q)$&
            $P_{1}$&
            $\Phi_{3}^{2}\Phi_{6}\Phi_{9}\Phi_{12}$  &
            $q$ &
            \\
            $6$  &
            $E_{6}(q)$&
            $P_{3}$  &
            $\Phi_{2}\Phi_{3}^{2}\Phi_{4}\Phi_{6}^{2}
            \Phi_{8}\Phi_{9}\Phi_{12}$   &
            $q$ &
            \\
            $7$  &
            $E_{6}(q)$   &
            $P_{1,6}$ &
            $\Phi_{3}^{2}\Phi_{5}\Phi_{6}\Phi_{8}\Phi_{9}\Phi_{12}$&
            $\gcd(2,p)q$ &
            $H$ contains graph automorphism \\
            $8$  &
            $E_{6}(q)$   &
            $P_{3,5}$ &
            $\Phi_{2}\Phi_{3}^{2}\Phi_{4}^{2}\Phi_{5}\Phi_{6}^{2}\Phi_{8}\Phi_{9}\Phi_{12}$&
            $\gcd(2,p)q$ &
            $H$ contains graph automorphism \\
            \noalign{\smallskip}\hline
            Note: & \multicolumn{5}{p{12cm}}{Here $\Phi_{n}:=\Phi_{n}(q)$ is the $n$-th cyclotomic polynomial with $q=p^{a}$ and $p$ prime.}
        \end{tabular}
    }
\end{table}

In what follows, we write $P_{i}$ to denote a standard maximal parabolic subgroup corresponding to deleting the $i$-th node in the Dynkin diagram of $X$, where we label the Dynkin diagram in the usual way, following \cite[p. 180]{b:KL-90}. We also use $P_{i,j}$ to denote the intersection of appropriate parabolic subgroups of
type $P_{i}$ and $P_{j}$.

\begin{proposition}\label{prop:parab-E6}
    If $X=E_{6}(q)$ and $H$ is a parabolic subgroup of $X$, then $H\cap X$ cannot be $P_{1}$. If $H\cap X=P_{3}$, then $v=(q^8+q^4+1)(q^9-1)/(q-1)$, $k=m\cdot w(q)+1$ and $\lambda=q^{-1}(q^{4}+1)^{-1}(m^{2}\cdot w(q)+m)$, where $w(q)=q^{3}+\sum_{i=0}^{11} q^{i}$ and $m<q(q^{4}+1)$.
\end{proposition}
\begin{proof}
    If $H\cap X$ is $P_{1}$ or $P_{3}$, then the parameter $v$ in each case can be read off from Table~\ref{tbl:parab}. We now analyse each case separately.\smallskip
    
    \noindent \textbf{(i)} Suppose first that $X=E_{6}(q)$ and $H\cap X=P_{1}$. Then $v=(q^{3}+1)(q^{4}+1)(q^{9}-1)(q^{12}-1)/(q-1)(q^{2}-1)$.
    Moreover, by \eqref{eq:k-g}
    \begin{align}\label{eq:out}
        k \mid bg(q),
    \end{align}
    where $b=2a\gcd(3,q-1)$ and $g(q)=q^{36}(q-1)(q^2-1)^2(q^3-1)(q^4-1)(q^5-1)$.
    In this case, it follows from \cite{a:Korableva-E6E7} that $X$ has subdegrees $q(q^5-1)(q^4-1)/(q-1)^2$ and $q^{13}(q^{5}-1)(q-1)/(q-1)^{2}$. Moreover, by Lemma \ref{lem:six}(a) and (b), we have that $k$ is a divisor of $\lambda\gcd(v-1,|H|)$. Therefore, by taking $b_{1}:=b$, it follows from   \eqref{eq:k-lam} that
    \begin{align}\label{eq:k-E6}
        k=\frac{m\cdot (v-1)}{b f(q)}+1,
    \end{align}
    where $f(q)=q(q^{4}+q^{3}+q^{2}+q+1)$ and $m$ is a positive integer.
    We first show that the $p$-part of $k$ is less than $q^6$. Assume the contrary. Then $q^3$ divides $k$, and so by \eqref{eq:k-E6}, $q^3$ must divide $m(q^2+q+1)+b$. Let now $n_{1}$ be a positive integer such that $n_{1}q^3-b=m(q^2+q+1)$. Then
    \begin{align}\label{E6-m-n1}
        m=\frac{n_{1}q^3-b}{q^2+q+1}=n_{1}\cdot (q-1)+\frac{n_{1}-b}{q^2+q+1}.
    \end{align}
    We now consider the following three cases:\smallskip
    
    \noindent \textbf{(i.1)} Let $n_{1}=b$. Then $m=b(q-1)$, and so by \eqref{eq:k-E6}, $k=q^3k_1(q)$, where $k_1(q)=q^{18}-q^{17}+q^{16}+q^{14}+q^{11}+q^{8}-q^{7}-2q^2+q-1$. Then by \eqref{eq:out}, $k_{1}(q)$ must divide $bg(q)$. Since $\gcd(k_1(q), g(q))$ divides $q^4+q^3+q^2+q+1$, it follows that $k_1(q)<b(q^4+q^3+q^2+q+1)$, which is impossible.\smallskip
    
    \noindent \textbf{(i.2)} Let $n_{1}<b$. Since $m$ is integer, by \eqref{E6-m-n1}, $q^2+q+1$ must divide $b-n_{1}$. Then $n_{1}=b-n_{2}\cdot (q^2+q+1)$, for some positive integer $n_{2}$. Since $n_{1}>0$, it follows that $n_{2}(q^2+q+1)<b$, and hence $q^2+q+1<b\leq 6a$, which is impossible.\smallskip
    
    \noindent \textbf{(i.3)} Let $n_{1}>b$. Since $m$ is integer, by \eqref{E6-m-n1}, $q^2+q+1$ must divide $n_{1}-b$. Then $n_{1}=n_{2}(q^2+q+1)+b$, for some positive integer $n_{2}$, and so by \eqref{E6-m-n1}, we have that $m=n_{2}q^3+b(q-1)$. Replacing, $m$ in \eqref{eq:k-E6}, we have that $bk=q^{4}t_{1}(a,n_{2},q)+q^{3}(n_{2}-b)$,  for some polynomial $t_{1}(a,n_{2},q)$. Since $q^{6}$ divide $k$, it follows that $n_{2}-6a=0$ or $q$ divides $|n2-b|$, where $b=2a\cdot \gcd(3,q-1)$. We again consider the following three cases:\smallskip
    
    \noindent \textbf{(i.3.1)} Let $n_{2}=b$. Then $m=b\cdot (q^3+q-1)$, and so by \eqref{eq:k-E6}, $k=q^4k_2(q)$, where $k_2(q)=q^{19}+2q^{17}+3q^{15}+2q^{14}+3q^{13}+3q^{12}+3q^{11}+4q^{10}+4q^9+3q^8+4q^7+2q^6+3q^5+3q^4+q^3+2q^2-q+2$. Then by \eqref{eq:out}, $k_{2}(q)$ must divide $b\cdot g_{1}(q)$, where $g_{1}(q)=g(q)/q^{36}$. It follows that $k_2(q)<b\cdot (q-1)(q^2-1)^2(q^3-1)(q^4-1)(q^5-1)$, which is impossible.\smallskip
    
    \noindent \textbf{(i.3.2)} Let $n_{2}<b$. Since $q^6$ divides $k$, $q$ must divide $b-n_{2}$. Then $q<b=2a\cdot \gcd(3, q-1)$. This inequality holds only for $q=2, 4, 16$. For these values of $q$, by Lemma~\ref{lem:comp} and Remark~\ref{rem:alg}, we cannot find any possible parameters set.\smallskip
    
    \noindent \textbf{(i.3.3)} Let $n_{2}>b$. Since $q^{6}$ divide $k$, it follows that $q$ must divide $n_{2}-b$, and so $n_{2}=n_{3}q+b$, for some positive integer $n_{3}$. Since $m=n_{2}q^3+b\cdot (q-1)$, we have that $m=n_{3}q^{4}+b\cdot (q^{3}+q-1)$, and again by \eqref{eq:k-E6}, we have that $bk=q^{5}t_{2}(a,n_{3},q)+q^{4}(n_{3}+2b)$, for some polynomial $t_{2}(a,n_{3},q)$. Since again $q^{6}$ divides $k$, we obtain $n_{3}=n_{4}q-2b$, for some positive integer $n_{4}$. Therefore, $m=n_{4}q^5-b\cdot (2q^4+q^3+q-1)$ and by \eqref{eq:k-E6}, we have that $bk=q^{6}t_{3}(a,n_{4},q)+q^{5}(n_{4}-3b)$, for some polynomial $t_{3}(a,n_{4},q)$. By the same argument, we conclude that $n_{4}=n_{5}q-3b$, for some positive integer $n_{5}$, and hence  $m=n_{5}q^6+b\cdot (3q^5-2q^4+q^3+q-1)$. Note by \eqref{eq:m} that $m\leq bf(q)$, where $f(q)=q(q^{4}+q^{3}+q^{2}+q+1)$ and $b=2a\gcd(3,q-1)$. Thus $q^6+b(3q^5-2q^4+q^3+q-1)<2bq^{5}$, which is impossible.\smallskip
    
    Therefore, our claim is settled and the $p$-part of $k$ is less than $q^{6}$. Thus \eqref{eq:out} implies that $k$  divides $2ag_{1}(q)$, where $g_{1}(q)=q^{6}(q-1)(q^2-1)^2(q^3-1)(q^4-1)(q^5-1)$. We now apply the method explained in Section~\ref{sec:method}, replace $g_{1}(q)$ with $g(q)$, and taking $h(q)=q^3-q^2-3q+1$ and $r(q)=f(q)g_{1}(q)-h(q)(v-1)$, we conclude by \eqref{eq:v-F-2} that $q=p^{a}$ is as below
    \begin{center}
        \small
        \begin{tabular}{l|cccccc}
            $p$ &
            $2$ &
            $3$ &
            $5$ &
            $7$ &
            $11, \ldots, 19$ &
            $31, \ldots, 103$ \\
            \hline
            %
            $a\leq$
            & $14$
            & $5$
            & $4$
            & $3$
            & $2$
            & $1$ \\
        \end{tabular}
    \end{center}
    \noindent For these values of $q$, by Lemma~\ref{lem:comp} and Remark~\ref{rem:alg}, we cannot find any possible parameters set.\smallskip
    
    \noindent \textbf{(ii)} Suppose that $X=E_{6}(q)$ and $H\cap X=P_{3}$. Here $v=(q^8+q^4+1)(q^9-1)/(q-1)$. Note by \cite[p. 345]{a:Saxl2002} that $G$ has nontrivial subdegrees  $q(q^{8}-1)(q^{3}+1)/(q-1)$ and $q^{8}(q^{5}-1)(q^{4}+1)/(q-1)$, and so by Lemma~\ref{lem:six}(c), we conclude that $k$ divides $\lambda f(q)$, where $f(q)=q(q^{4}+1)$. Then by \eqref{eq:k-lam} and \eqref{eq:m}, we have that $ k=m\cdot w(q)+1$ and $\lambda=q^{-1}(q^{4}+1)^{-1}(m^{2}\cdot w(q)+m)$,
    where $w(q)=q^{3}+\sum_{i=0}^{11} q^{i}$ and $m<q(q^{4}+1)$.
\end{proof}

\begin{proposition}\label{prop:parab}
    Let $H$ be a maximal parabolic subgroup of $G$ and $(X,H\cap X)\neq (E_{6}(q), P_{i})$, for $i=1,3$. Then $X=G_{2}(q)$ and $H\cap X=\,^{\hat{}}[q^5]:\GL_{2}(q)$ and $v=q^5+q^4+q^3+q^2+q+1$, $k=q^5$ and  $\lambda=q^5-q^4$.
\end{proposition}
\begin{proof}
    Recall by \cite{a:Zhou-exp} that we only need to consider the case where $X$ is of type $G_{2}$, $F_{4}$, $E_{6}^{\e}$, $E_{7}$ or $E_{8}$.
    
    We first consider the possibilities recorded in Table~\ref{tbl:parab}. The case where $(X,H\cap X)=(E_{6}(q), P_{i})$ with $i=1,3$ as in lines $5$ and $6$ has been treated in By Proposition~\ref{prop:parab-E6}. In the remaining cases, we prove that line 1 is the only possible case.
    
    Let first  $X=G_{2}(q)$ and $H\cap X=P_{1}$ as in line $1$ of Table \ref{tbl:parab}. Here $v=q^5+q^4+q^3+q^2+q+1$, and so $|v-1|_{p}=q$,  and by \eqref{eq:k-lam}, we have that
    \begin{align}
        k= m\cdot (q^4+q^3+q^2+q+1)+1 \quad \text{and} \quad
        \lambda =  m^2\cdot (q^3+q^2+q+1)+\frac{m^2+m}{q}.\label{G2-parabolic-subcase1-lam}
    \end{align}
    So by \eqref{eq:m} and the fact that ,  we conclude that $m=q-1$. Therefore, \eqref{G2-parabolic-subcase1-lam} implies that $k=q^5$ and $\lambda=q^5-q^4$. This is part (a).
    
    Let $X=G_{2}(q)$ and $H\cap X=P_{1,2}$ as in line $2$ of Table \ref{tbl:parab}. Here we also have $|v-1|_{p}=q$, and by the same argument as above, since $m<q$ and $q$ divides $m(2m+1)$, we conclude that $m$ is $(q-1)/2$ or $(2q-1)/2$. In the latter case, we have that $2k=q^{5}+2\sum_{i=1}^{6}q^{i}$, which is impossible as $q=3^{a}$. If $m=(q-1)/2$, then $k=q^{5}(q+1)/2$, and so $(q+1)/2$ divides $k$. Note by Lemma~\ref{lem:six}(b) that $k$ divides $2aq^{6}(q-1)^{2}$. Then $q+1$ divides $16a$, and so $q=3$ for which we have the parameters $(1456,486,162)$, but by \cite[p. 473]{a:Braic-2500-nopower}, there is no flag-transitive or antiflag-transitive design with this parameter set.
    
    We now consider the case where the subgroup $H$ contains a graph automorphism listed in lines 3-4 and 7-8 of Table~\ref{tbl:parab}. We here note by Lemma~\ref{lem:subdeg} and Remark~\ref{rem:subdeg} that in these cases we still have a prime power subdegree $p^{t}$. Therefore, in each case, as $|v-1|_{p}$ divides $2q$, it follows from Lemma~\ref{lem:six} that  $k$ divides $\lambda f(q)$ where $f(q)=2q$. We give our argument for $X=F_{4}(q)$ and $H\cap X=P_{1,4}$, and other cases can be ruled out in a similar manner. In this case, we have that $v=(q^6-1)(q^8-1)(q^{12}-1)(q-1)^{-2}(q^4-1)^{-1}$, and so, as noted above, $mk=b_{1}\lambda f(q)$, where $b_{1}=2$, $f(q)=q$ and $m$ is a positive integer. By \eqref{eq:k-lam}, we have that
    \begin{align}
        2k &= m\cdot w(q)+2 \label{eq:F4-k-1} \text{ and }\\
        4\lambda&=m^{2}\cdot t(q)+\frac{2m(m+1)}{q},\label{eq:F4-lam-1}
    \end{align}
    where $w(q)=(v-1)/f(q)$ and $t(q)$ is a polynomial in terms of $q$. Thus \eqref{eq:F4-lam-1} implies that $q$ divides $2m(m+1)$, and hence $q$ divides $2m$ or $2m+2$. If $q$ divides $2m$, then since $m\leq 2q$ by \eqref{eq:m}, it follows that $m=iq/2$ with $i=1,2,3$. By replacing $m$ in \eqref{eq:F4-k-1},  we have that $4k=qt_{i}(q)+4$ for $i=1,2,3$, where $t_{i}(q)$ is a polynomial in terms of $q$. This shows that if $p\mid k$, then $p$-part of $k$ must divide $4$, and hence the $p$-part of $k$ is at most $q^{2}$. Therefore, by \eqref{eq:k-g}, we conclude that $k$ must divide $2ag_{1}(q)$, where $g_{1}(q)=q^{2}(q-1)^2(q^2-1)(q^4-1)$. Therefore, by \eqref{eq:k-3}, we must have $v<4af(q)g_{1}(q)$, and hence $v<4aq^{3}(q-1)^2(q^2-1)(q^4-1)$, which is impossible.
    
    We finally consider those parabolic subgroups which are not listed as in Table~\ref{tbl:parab}. Then $|v-1|_{p}=q$ and $v-1=q\cdot w(q)$, for some polynomial $w(q)$ coprime to $p$. By Lemma~\ref{lem:subdeg} and Remark~\ref{rem:subdeg}, there is a subdegree $d$ which is a power of $p$. Then $\gcd(v-1,d)=q$. Let $f(q)=q$. Then by Lemma~\ref{lem:six}, $k$ must divide $\lambda f(q)$, and hence by \eqref{eq:m}, we have that
    \begin{align}\label{eq:parab-m}
        m<q,
    \end{align}
    Moreover, \eqref{eq:k-lam} implies that
    \begin{align}
        k &= mw(q)+1 \label{eq:parab-k} \text{ and } \\
        \lambda &=  m^2t(q)+\frac{m^2+m}{q},\label{eq:parab-lam}
    \end{align}
    where $mk=\lambda f(q)$ as in \eqref{eq:k-f} and $t(q)=(v-q-1)/q^{2}$ is a polynomial in terms of $q$. Since $\lambda$ is a positive integer, it follows form \eqref{eq:parab-lam} that $q$ divides $m^2+m$. Note that $q$ is a prime power number. Then by \eqref{eq:parab-m}, we conclude that $m=q-1$. Now \eqref{eq:parab-k} implies that $k=(q-1)w(q)+1$. In each case, we can find a polynomial $l(q)$ which is a multiple of $\gcd(k, |H\cap X|)$ satisfying $k/l(q)>q^4$. On the other hand, by Lemmas~\ref{lem:six}(b) and~\ref{lem:New}, $k$ divides $|\Out(X)|\cdot |H\cap X|$, and so we conclude that $k/l(q)$ divides $|\Out(X)|$, and since $k/l(q)>q^4$, then $|\Out(X)|>q^4$, which is impossible. For example, suppose that $X=F_{4}(q)$ and $H\cap X=P_{1}$. Then $v=(q^8-1)(q^{12}-1)(q^4-1)^{-1}(q-1)^{-1}$, and so $v-1=q\cdot w(q)$, where $w(q)=q^{10}+q^{9}+q^{8}+q^{7}+q^{6}+q^{5}+q^{4}+q^{3}+\sum_{i=0}^{14} q^{i}$. Thus $k=q^{3}(q^{12}+q^{8}-1)$, and hence $l(q)=\gcd(k,|H\cap X|)=q^{3}$. Therefore, $k/l(q)=q^{12}+q^{8}-1>q^{4}$.
\end{proof}

\subsection{Remaining cases}\label{sec:non-prab}

In this section, in order to prove Theorem~\ref{thm:main}, we need to consider the remaining large maximal subgroups of $G$ which are not parabolic, subfield and listed in Table~\ref{tbl:num}. In most cases, we follow our method which is explained in details in Subsection~\ref{sec:method} but in some cases, namely  Propositions~\ref{prop:G2} and~\ref{prop:F4}, we need extra arguments.

\begin{proposition}\label{prop:G2}
    If $X=G_{2}(q)$ and $H\cap X=SL_{3}^{\e}(q):2$ with $\e=\pm$, then $v=q^{3}(q^{3}+\e1)/2$,  $k=q^{3}(q^{3}-\e1)/6$,  and
    $\lambda=q^{3}(q^{3}-\e3)/18$, where $q=3^{a}\geq 3$.
\end{proposition}
\begin{proof}
    Suppose now that $H\cap X= SL_{3}^{\e}(q):2$ with $\e=\pm$. Then  by~\eqref{eq:v}, we have that $v=q^{3}(q^{3}+\e1)/2$, and so $v-1=v_{1}(q)/c$, where $v_{1}(q)=(q^3-1)(q^3+\e2)$ and $c=2$. Thus, Proposition~\ref{prop:g2sl3}, Proposition 1 in \cite{a:LPS2} and Lemma~\ref{lem:six}(c) implies that $k$ divides $\lambda f(q)$ where $f(q)=q^{3}-\e1$, and so there exists a positive integer $m$ such that $mk=\lambda f(q)$. By \eqref{eq:k-lam} and \eqref{eq:m}, we have that
    \begin{align}
        k&= \frac{m\cdot (q^3+\e2)+2}{2},\label{eq:G2-SL-k}\\
        \lambda&=\frac{m^2\cdot (q^{3}+\e 2)+2m}{2(q^3-\e1)},\label{eq:G2-SL-lam}
    \end{align}
    where
    \begin{align}\label{eq:G2-SL-m}
        m<q^3-\e1.
    \end{align}
    
    Note that $\lambda$ is a positive integer. Then by \eqref{eq:G2-SL-lam}, we conclude that
    \begin{align}\label{eq:G2-SL-lam-1}
        q^{3}-\e1 \text{ divides } m(3m+\e2).
    \end{align}
    Moreover, since $w(q)=v_{1}(q)/f(q)=q^3+\e2$ by \eqref{eq:w-1}, we have that
    \begin{align}\label{eq:G2-SL-k-m}
        m\cdot (q^3+\e2)+2 \text{ divides } 8ag(q),
    \end{align}
    where $g(q)=q^{3}(q^{2}-1)(q^{3}-\e 1)$. Let $h(q)= (q^2-1)(q^3-\e3)$ and $r(q)=6(q^2-1)$. Set $b_{1}:=1$ and $b:=4a$. Since $c=2$, it follows from \eqref{eq:G} that $G(m,q):=G(m,q,b,b_{1},c)=48ma(q^2-1)-16ah(q)$ with $q=p^{a}$. Therefore $G(m,q)=0$ or by \eqref{eq:G-1},
    \begin{align}\label{eq:G2-SL-k-m-1}
        m\cdot (q^3+\e2)+2 \text{ divides } |G(m,q)|.
    \end{align}
    
    \noindent \textbf{(1)} If $G(m,q)=0$, then  $48ma(q^2-1)-16ah(q)=0$, and so $m=(q^{3}-\e3)/3$. Then we must have $q=3^{a}$. By \eqref{eq:G2-SL-k} and \eqref{eq:G2-SL-lam}, we conclude that
    \begin{align*}
        k=\frac{q^{3}(q^{3}-\e1)}{6} \text{ and }
        \lambda=\frac{q^{3}(q^{3}-\e3)}{18},
    \end{align*}
    as claimed.\smallskip
    
    \noindent \textbf{(2)} If $G(m,q)>0$, then $48ma(q^2-1)>16ah(q)$, and so $m>(q^{3}-\e3)/3$, and by \eqref{eq:G2-SL-k-m-1}, we obtain $q <48a$. This inequality holds when $q=p^{a}$ is as in Table~\ref{tbl:G2-SL}. Since $(q^{3}-\e3)/3<m<q^3-\e1$, for each $q$ and $a$ as in Table~\ref{tbl:G2-SL}, we can find the value of $m$. For these values of $q$ and $m$, the statement \eqref{eq:G2-SL-k-m-1} is not true. \smallskip
    
    \begin{table}
        \centering
        \scriptsize
        \caption{Possible values of $q=p^{a}$ in Proposition~\ref{prop:G2}. }\label{tbl:G2-SL}
        \resizebox{\textwidth}{!}{
            \begin{tabular}{lp{9.5cm}l}
                \noalign{\smallskip}\hline\noalign{\smallskip}
                Case & $q$ & Conditions \\
                \noalign{\smallskip}\hline\noalign{\smallskip}
                $2$ & $2^{a}$  with $a\leq 8$, $3^{a}$  with $a\leq 4$, $5^{a}$  with $a\leq 3$, $7^{a}$  with $a\leq 2$, $p$ with $p=11, 13, \ldots, 47$ & \\
                $3.2$ & $2^{a}$  with $a\leq 7$, $3^{a}$  with $a\leq 4$, $5^{a}, 7^{a}$  with $a\leq 2$, $p$ with $11, 13, 17, 19, 23$  & \\
                $3.3$ & $2^{a}$  with $a\leq 20$,
                $3^{a}$  with $a\leq 11$,
                $5^{a}$  with $a\leq 7$,
                $7^{a}$  with $a\leq 5$,
                $11^{a}, 13^{a}$  with $a\leq 4$,
                $17^{a}, 19^{a}, 23^{a}, 29^{a}$  with $a\leq 3$,
                $31^{a}, \ldots, 109^{a}$  with $a\leq 2$,
                $p$ with $113, 127, \ldots, 3067$  & $\e=-$ and $m< 32aq$ \\
                $3.3$ & $2^{a}$  with $a\leq 17$,
                $3^{a}$  with $a\leq 10$,
                $5^{a}$  with $a\leq 6$,
                $7^{a}$  with $a\leq 5$,
                $11^{a}, 13^{a}, 17^{a}, 19^{a}$  with $a\leq 3$,
                $23^{a}, \ldots, 53^{a}$  with $a\leq 2$,
                $p$ with $61, 67, \ldots, 761$ & $\e=+$ \\
                $3.3$ & $2^{a}$  with $a\leq 9$,
                $3^{a}$  with $a\leq 5$,
                $5^{a}$  with $a\leq 3$,
                $7^{a}, 11^{a}$  with $a\leq 2$,
                $p$ with $13, \ldots, 73$ & $\e=-$ and $m\geq 32aq$ \\
                \noalign{\smallskip}\hline
            \end{tabular}
        }
    \end{table}
    
    \noindent \textbf{(3)} If $G(m,q)<0$, then  $48ma(q^2-1)<16ah(q)$, and so
    \begin{align}\label{eq:G2-SL-m-2}
        m<\frac{q^{3}-\e3}{3}.
    \end{align}
    We claim that $\gcd(k, q^3)< q^2$. Assume to the contrary that $q^2$ divides $k$. Since $k-mq^3/2=\e m+1$, $q^2$ must divide $m+\e1$. Thus $m +\e1 = uq^2$ for some integer $u$. By \eqref{eq:G2-SL-m-2}, we observe that
    \begin{align}\label{eq:G2-SL-u}
        u<\frac{q}{3}.
    \end{align}
    Recall that $q^3-\e1$ divides $3m^2 +\e2m= (uq^2-\e1)(3uq^2-\e1)=3u^2(q^3-\e1)q+\e3u^2q-\e4uq^2+1$. Then
    \begin{align}\label{eq:G2-SL-fuq}
        q^3-\e1 \text{ must divide } |\e3u^2q-\e4uq^2+1|.
    \end{align}
    Let now $f_{q}^{\e}(u)=\e3u^2q-\e4uq^2+1$. For a fixed $q$, the map $f_{q}^{\e}(u)$ is decreasing (increasing) if $1\leq u\leq 2q/3$ and $\e=+$ ($\e=-$). As $u<q/3$ by \eqref{eq:G2-SL-u}, we conclude that  $|\e3u^2q-\e4uq^2+1|=|f_{q}^{\e}(u)|<|f_{q}^{\e}(q/3)|=q^{3}-\e1$, and this contradicts \eqref{eq:G2-SL-fuq}.
    
    Therefore, $\gcd(k, q^3) < q^2$, as claimed. Hence \eqref{eq:G2-SL-k} implies that
    \begin{align}\label{eq:G2-SL-k-u}
        m\cdot (q^3+\e2)+2 \text{ divides } 8aq^2(q^2-1)(q^3-\e1).
    \end{align}
    Note that $8maq^2(q^3-\e1)(q^2-1)=8ah(q)[m(q^3+\e2)+2]+G^{\e}(m,q)$, where $h(q) = q^4-q^2-\e3q$, $r(q) = 3(q^2+\e2q)$ and $G^{\e}(m,q)=\e 8mar(q)-16ah(q)$ with $q=p^{a}$ and $\e=\pm$. Then $G^{\e}(m,q)=0$ or
    we conclude by \eqref{eq:G2-SL-k-u} that
    \begin{align}\label{eq:G2-SL-k-m-2}
        m\cdot (q^3+\e2)+2 \text{ divides }  |G^{\e}(m,q)|,
    \end{align}
    
    \noindent \textbf{(3.1)} Suppose that $G^{\e}(m,q)=0$. Then $\e=+$, and so  $16ah(q)=8mar(q)$. Then $3m=2h(q)/r(q)=2q^4-4(q-1)-14/(q+2)$. This implies that $q+2$ is a multiple of $14$, and so we conclude that $q=5$ in which case $3m=1232$, which is a contradiction.\smallskip
    
    \noindent \textbf{(3.2)} Suppose that $G^{\e}(m,q)>0$. Then $\e=+$. By \eqref{eq:G2-SL-k-m-2}, $m(q^3+2)+2<|G^{\e}(m,q)|=8mar(q)-16ah(q)\leq 8mar(q)$, and so $q^3<24ar(q)$. Since $r(q) = 3(q^2+\e2q)$, it follows that $q^3<24a(q^2+2q)$, or equivalently, $q^2<24a(q+2)$. This inequality holds when $q=p^{a}$ is as in Table~\ref{tbl:G2-SL}, and for such $q$, we can obtain $m$ by \eqref{eq:G2-SL-m-2} but for these values of $(q,m)$ we cannot find any parameters satisfying \eqref{eq:G2-SL-k-m-2}.

    \noindent \textbf{(3.3)} Suppose that $G^{\e}(m,q)<0$. Let $\e=+$. Then \eqref{eq:G2-SL-k-m-2} implies that $m(q^3+2)+2<|G^{\e}(m,q)|= 16ah(q)-8mar(q)<16ah(q)$, and so $m< 16aq$. Note by \eqref{eq:G2-SL-lam-1} that $q^{3}-1$ divides $3m^{2}+2m$. Then $q^{3}-1\leq 3m^{2}+2m<3\cdot 16^{2}a^{2}q^{2}+2\cdot 16aq$, and this holds for the $q=p^{a}$ as in Table~\ref{tbl:G2-SL}. Again we can find $m$ by \eqref{eq:G2-SL-m-2} and in conclusion we cannot find any parameters satisfying \eqref{eq:G2-SL-k-m-2}. Let $\e=-$. Then \eqref{eq:G2-SL-k-m-2} implies that $m(q^3-2)+2<|G^{\e}(m,q)|= 16ah(q)+8mar(q)$. If $m\geq 32aq$, then $m[q^3-8ar(q)-2]< 16ah(q)$, and so $32aq[q^3-8ar(q)-2]< 16ah(q)$. This implies that $q<74a$. This is true for $q=p^{a}$ as in Table~\ref{tbl:G2-SL} for which there is no possible parameters satisfying \eqref{eq:G2-SL-k-m-2} when $m$  is as in \eqref{eq:G2-SL-m-2}. Therefore, $m<32aq$. Note by \eqref{eq:G2-SL-lam-1} that $q^{3}+1$ divides $3m^{2}-2m$. Then $q^{3}+1\leq m(3m-2)<32aq(3\cdot 32aq-2)$, and this holds for the $q=p^{a}$ as in Table~\ref{tbl:G2-SL}. These cases can also be ruled out as for $m$ as in \eqref{eq:G2-SL-m-2} we cannot find any parameters satisfying \eqref{eq:G2-SL-k-m-2}.
\end{proof}

\begin{proposition}\label{prop:F4}
    If $X=F_{4}(q)$, then $H\cap X$ cannot  be $2\cdot \Omega_{9}(q)$ with $q$ odd and $C_{4}(q)$ with $p=2$.
\end{proposition}
\begin{proof}
    Let $H\cap X$ be $2\cdot \Omega_{9}(q)$ with $q$ odd or $C_{4}(q)$ with $p=2$. Our argument in these cases are similar, so we only deal with the case where $H\cap X=2\cdot \Omega_{9}(q)$ with $q$ odd. Then $|H\cap X|=q^{16}(q^2-1)(q^4-1)(q^6-1)(q^{8}-1)$, and so by~\eqref{eq:v}, we have that $v=q^{8}(q^{8}+q^{4}+1)$. Then by \eqref{eq:k-g}, we have that
    \begin{align}\label{eq:F4-O9-k-d}
        k\mid bg(q),
    \end{align}
    where $b=2a$ and $g(q)=q^{16}(q^{8}-1)(q^{6}-1)(q^{4}-1)(q^{2}-1)$.
    Note that $\gcd((q^{8}-1)(q^{6}-1)(q^{4}-1)(q^{2}-1), v-1)=q^{4}+1$. By \eqref{eq:k-f}, \eqref{eq:m} and Tits' lemma~\ref{lem:Tits}, we conclude that $mk=b_{1}\lambda f(q)$ where $b_{1}=2a$, $f(q)=(q^{4}+1)$ and $m$ is a positive integer satisfying
    \begin{align}\label{eq:F4-O9-m}
        m<2a(q^{4}+1).
    \end{align}
    Therefore \eqref{eq:k-lam} implies that
    \begin{align}
        k &= \frac{m(q^{12}+q^{4}-1)}{2a}+ 1,\label{eq:F4-O9-k} \\
        4a^{2}\lambda &= m^{2}(q^{8}-q^{4}+2)-\frac{3m^{2}-2ma}{q^{4}+1}\label{eq:F4-O9-lam}
    \end{align}
    Since $\lambda$ is integer, $q^{4}+1$ divides $3m^{2}-2ma$, and so
    \begin{align}\label{eq:F4-O9-m2}
        m>\frac{q^{2}+1}{\sqrt{3}}.
    \end{align}
    
    We claim that $\gcd(k, q^{16})< q^{4}$. Assume the contrary. Then $q^{4}$ divides $2ak$. Since $mq^{4}(q^{8}+1)-[m(q^{12}+q^{4}-1)+ 2a]=m-2a$, we have
    \begin{align*}
        mq^{4}(q^{8}+1)-2ak=m-2a,
    \end{align*}
    and so $q^{4}$ must divide $m-2a$. Thus $m-2a=uq^{4}$ for some integer $u$. By \eqref{eq:F4-O9-m},
    \begin{align}\label{eq:F4-O9-u}
        u<2a.
    \end{align}
    Recall that $q^{4}+1$ divides $3m^{2}-2ma=(3q^4u^2+10a u-3u^2)(q^{4}+1)+(2a-u)(4a-3u)$. Then
    \begin{align}\label{eq:F4-1}
        q^{4}+1 \text{ must divide } (2a-u)\cdot |4a-3u|.
    \end{align}
    If $4a-3u\neq 0$, then $q^{4}+1< 8a^{2}$, which is a contradiction. If $4a-3u=0$, then $u=4a/3$, and so $m=2a(2q^{4}+3)/3$. Then by \eqref{eq:F4-O9-k}, we have that $k=q^{4}(q^{4}+1)(2q^{8}+q^{4}+1)/3$. Therefore, by \eqref{eq:F4-O9-k-d}, we conclude that $k$ divides $2ag_{1}(q)$, where  $g_{1}(q)=q^{4}(q^{8}-1)(q^{6}-1)(q^{4}-1)(q^{2}-1)$. Then, $2q^{8}+q^{4}+1$ must divide $24a(q^{6}-1)(q^{4}-1)^{2}(q^{2}-1)$, and hence $2q^{8}+q^{4}+1$ has to divide $3a(6q^{6}-17q^{4}-14q^{2}-3)$, which is impossible.
    
    Therefore, $\gcd(k, q^{16})< q^{4}$, as claimed. Now by \eqref{eq:F4-O9-k-d}, the parameter $k$ divides  $2ag_{1}(q)$, where $g_{1}(q)=q^{4}(q^{8}-1)(q^{6}-1)(q^{4}-1)(q^{2}-1)$. We continue our argument by replacing $g_{1}(q)$ with $g(q)$ in Subsection~\ref{sec:method}. Then \eqref{eq:w-1} implies that $m\cdot w(q)+ 2a$  divides $4a^2\cdot g_{1}(q)$,
    where $w(q)=q^{12}+q^{4}-1$. It follows from \eqref{eq:G-1} that
    $m(q^{12}+q^{4}-1)+ 2a \leq 4ma^2r(q)+8a^{3}h(q)$, where $h(q)= q^{12}-q^{10}-q^{8}-q^{4}+3q^{2}+2$ and $r(q)=q^{10}+q^{8}+4q^{6}+2q^{4}-3q^{2}-2$, and so \eqref{eq:F4-O9-m2} implies that
    \begin{align*}
        q^{2}<\frac{(q^{2}+1)(q^{12}+q^{4}-4a^2r(q)-1)}{h(q)}\leq14a^{3}.
    \end{align*}
    As in this case $q$ is odd, this inequality implies that $q=3$ or $9$, and so by \eqref{eq:F4-O9-m}, $m$ is at most $164$ or $26248$, respectively. For each such value of $q$ and $m$, the parameters $k$ and $\lambda$ obtained in \eqref{eq:F4-O9-k} and \eqref{eq:F4-O9-lam} must be positive integers and all parameters must satisfy the conditions of symmetric designs in Lemma \ref{lem:six}, and this leads us to the parameters listed in Table~\ref{tbl:F4-O9}. However, by \eqref{eq:F4-O9-k-d}, $k$ must divide $263139026617958400=2^{16}\cdot3^{16}\cdot5^2\cdot7\cdot13\cdot41$ or $88987349938389359442577906728960000=2^{21}\cdot3^{32}\cdot5^4\cdot7\cdot13\cdot17\cdot41^2\cdot73\cdot193$, respectively for $q=3$ or $9$, which is a contradiction.
\end{proof}
\begin{table}
    \centering
    \scriptsize
    \caption{Some parameters for Proposition~\ref{prop:F4}. }\label{tbl:F4-O9}
    \begin{tabular}{llllll}
        \hline\noalign{\smallskip}
        $q$ & $a$ & $m$ & $v$ & $k$ & $\lambda$ \\
        \noalign{\smallskip}\hline\noalign{\smallskip}
        $3$ & $1$ & $28$ & $43584723$ & $7441295$ & $1270465$ \\
        $3$ & $1$ & $82$ & $43584723$ & $21792362$ & $10896181$ \\
        $9$ & $2$ & $772$ & $1853302661435043$ & $54508901806914$ & $
        1603202994321$ \\
        $9$ & $2$ & $3604$ & $1853302661435043$ & $254469018279942$ & $
        34940046551391$ \\
        $9$ & $2$ & $4376$ & $1853302661435043$ & $308977920086855$ & $
        51512015326885$ \\
        $9$ & $2$ & $13124$ & $1853302661435043$ & $926651330717522$ & $
        463325665358761$ \\
        \noalign{\smallskip}\hline
    \end{tabular}
\end{table}


We are now ready to prove Theorem~\ref{thm:main} and Corollaries~\ref{cor:main-1}-\ref{cor:main-2}. In what follows, we assume that $\Dmc$ is a nontrivial symmetric $(v, k, \lambda)$ design admitting a flag-transitive and point-primitive automorphism group $G$ with socle $X$ a finite simple exceptional group of Lie type.\smallskip

\noindent {\bf Proof of Theorem \ref{thm:main}} \quad
By the main result in \cite{a:Zhou-exp}, we only focus on the cases where $X$ is of type $G_{2}$, $F_{4}$, $E_{6}^{\pm}$, $E_{7}$ or $E_{8}$.

Since the group $G$ is point-primitive, the point-stabiliser $H$ is maximal in $G$, and by Corollary~\ref{cor:large}, flag-transitivity implies that $H$ is large that is to say $|G|\leq |H|^{3}$. We now apply Theorem~\ref{thm:large-ex} and analyse each possible case.

We first observe for $q=2$ that the list of maximal subgroups $H$ of $G$ can be read off from Atlas \cite{b:Atlas} and \cite{a:KW-E6-2,a:NW-F4-2}.  Note that the list of maximal subgroups of $E_6^{-}(2)$  presented in the Atlas \cite{b:Atlas} is complete (see \cite[p.304]{b:Atlas-Brauer}). We also exclude the case where $X=G_{2}(2)'$ as it is not simple. Then it is easy to check these cases by Lemma~\ref{lem:comp}  and Remark~\ref{rem:alg}, and so observe that no possible parameters sets arise in these cases.

We first conclude by Proposition~\ref{prop:num} that the numerical cases listed in Table~\ref{tbl:num} can be ruled out, and Proposition~\ref{prop:subf} shows that $H$ cannot be a subfield subgroup. If $H$ is a parabolic subgroup, then by Proposition~\ref{prop:parab}, part (a) or part (c) of Theorem~\ref{thm:main} follows.
If $X=G_{2}(q)$ and $H\cap X=SL_{3}(q)^{\e}:2$, Proposition~\ref{prop:G2} implies part (b) of Theorem~\ref{thm:main}. We note by Proposition~\ref{prop:F4} that $(X,H\cap X)\in \{(F_{4}(q),2\cdot \Omega_{9}(q)), (F_{4}(q),C_{4}(q))\}$ gives rise to no possible parameters set. We now consider the remaining possibilities for pairs $(X,H\cap X)$ as in Table \ref{tbl:large-exc-nonpar}. All these cases can be ruled out in the same manner following Steps 1-6 explained in Subsection \ref{sec:method}. Note that the required information for each case can be found in Table~\ref{tbl:rem}. Note also that as pointed out in Subsection \ref{sec:method}, the subdegrees in Theorem \ref{thm:martin} are important tools to obtain the polynomial $f(q)$ listed in Table \ref{tbl:rem}. As an example, in what follows, we show that $X$ cannot be $E_{8}(q)$.

Suppose that $X=E_{8}(q)$. Then $H\cap X$ is one of the groups
\begin{align*}
    c_{4}^{\e}\cdot (A_{2}^{\e}(q)\times E_{6}^{\e}(q)) \cdot e_{4}^{\e}\cdot 2, \  c_{1}\cdot D_{8}(q)\cdot c_{1} \text{ and }   c_{1}\cdot (A_{1}(q)\times E_{7}(q)) \cdot c_{1},
\end{align*}
where $c_{1}=\gcd(2, q-1)$, $c_{4}^{\e}=\gcd(3, q-\e1)$ and $\e=\pm$. If $H\cap X=c_{4}\cdot (A_{2}^{\e}(q)\times E_{6}^{\e}(q) )\cdot c_{4}\cdot 2$, then by~\eqref{eq:v}, we obtain parameter $v$ as in Table~\ref{tbl:rem}. For $f(q)$, $g(q)$,  $b_{1}$ and $b$ as in Table~\ref{tbl:rem}, by Table~\ref{tbl:rem}, by \eqref{eq:k-3}, we must have $(v-1)+2af(q) \leq 4a^{2}f(q)g(q)$, and so $q^{10}< 4a^{2}$, which is a contradiction. In the remaining cases, the parameter $v$, $b_{1}$ and $b$, and polynomials $f(q)$, $g(q)$, $h(q)$ are given in Table~\ref{tbl:rem}. Let $r(q)=f(q)g(q)-h(q)\cdot v_{1}(q)$, where $v-1=v_{1}(q)/c$. We note here that if $H\cap X=c_{4}\cdot D_{8}(q)\cdot c_{4}$, then $f(q)$ is the polynomial which is divisible by $\gcd(v-1,g(q))$. Since \eqref{eq:k-3} holds for almost all $q$, by \eqref{eq:v-F-2}, we have that $v<a^{2}(|r(q)|+a|f(q)h(q)|)$, and this follows that $q \in \{2,3,4,8\}$. For these values of $q=p^{a}$, we have no possible parameters set by Lemma~\ref{lem:comp}  and Remark~\ref{rem:alg}. In the case where $H\cap X=c_{1}\cdot (A_{1}(q)\times E_{7}(q) )\cdot c_{1}$ with $c_{1}=\gcd(2,q-1)$, we use the subdegrees given in Theorem~\ref{thm:martin}.
Note by Theorem~\ref{thm:martin} that the subdegrees in this case divide $c_{1}^{4}q^{32}(q^{6}+1)(q^{14}-1)(q^{18}-1)/(q^{2}-1)(q^{4}-1)$ and  $q^{28}(q^{2}-1)(q^{5}+\e1)(q^{9}+\e1)(q^{14}-1)$. As  $v-1$ and $q$ are coprime by Tits' Lemma~\ref{lem:Tits}, the greatest common divisors of $v-1$ and these subdegrees divides $f(q)=c_{1}^{2}(q^{2}-1)(q^{4}-1)$ as in Table~\ref{tbl:rem}. As \eqref{eq:k-3} holds for almost all $q$, it follows from \eqref{eq:v-F-2} that
$v< a(|r(q)|+|f(q)h(q)|)$. This implies that $q=2,3,5$ for which we can find $m$ by \eqref{eq:m}, but these values of $q$ and $m$ do not satisfy  \eqref{eq:v-F-1}, which is a contradiction.\bigskip

\noindent {\bf Proof of Corollary~\ref{cor:main-1}} \quad 
Suppose to the contrary that $\gcd(k,\lambda)=1$. We apply  Theorem~\ref{thm:main}, and observe that the possibilities (a) and (b) can be ruled out as in these cases $k$ and $\lambda$ are not coprime. Let now $X=E_{6}(q)$ with $q=p^a$ in cases (c). Then, $k=mw(q)+1$ divides $q(q^{4} +1)$, where $w(q)=q^{3}+\sum_{i=0}^{11}q^{i}$, which is impossible.

Therefore, $k$ and $\lambda$ must have at least one prime common divisor. Moreover, since $\lambda<k$, the parameter $k$ cannot be prime. Suppose now that $\lambda$ is prime. Then since $\gcd(k,\lambda)\neq 1$, we conclude that $\lambda$  divides $k$. Evidently, parts (a) and (b) of Theorem~\ref{thm:main} cannot occur. In part (c), $\lambda$ divides $6ap(q^{i}-1)$, for some $i=1,2,4,5,6,8$. Since also $\lambda$ is prime, we conclude that $\lambda$ divides $2$, $3$, $a$, $p$, or $q^{i}-1$, for some $i=1,2,3,4,5$. Thus $\lambda\in\{2,3,p\}$ or $\lambda\leq q^{4}+q^{3}+q^{2}+q+1$, and so in all cases $\lambda$ is at most $q^{4}+q^{3}+q^{2}+q+1$. Note by Lemma~\ref{lem:six}(b) that $\lambda v<k^{2}$. Since $k$ divides $q(q^{4}+1)\lambda$, we conclude that $v<q^{2}(q^{4}+1)^{2}\lambda$, and since $\lambda\leq q^{4}+q^{3}+q^{2}+q+1$, it follows that $(q^{9}-1)(q^{12}-1)<q^{2}(q-1)(q^{4}-1)(q^{4}+1)^{2}(q^{4}+q^{3}+q^{2}+q+1)$, which is impossible. Therefore, $\lambda$ is neither prime.\smallskip

\noindent {\bf Proof of Corollary~\ref{cor:main-2}}\quad 
Suppose that $\lambda \leq 100$. Then by Theorem~\ref{thm:main}, we need to consider one of the following cases:\smallskip

\noindent (a) $X=G_{2}(q)$ and $H\cap X=[q^5]:GL_{2}(q)$. Then $v=(q^{6}-1)/(q-1)$, $k=q^5$ and $\lambda=q^4(q-1)$. If $\lambda \leq 100$, then $q=2$. Note that $X=G_{2}(2)$ is not simple but for this non-simple group, we obtain $v=63$, $k=32$ and $\lambda=16$, and this is the complement of a symmetric $(63,31,15)$ design which is antiflag-transitive, see Table~\ref{tbl:cor-main} and \cite{a:Braic-2500-nopower,a:Dempwolff2001}.\smallskip

\noindent (b) $X=G_{2}(q)$ and $H\cap X=SL_{3}^{\e}(q):2$ with $\e=\pm$. Then
$v=q^{3}(q^{3}+\e1)/2$,  $k=q^{3}(q^{3}-\e1)/6$,  and $\lambda=q^{3}(q^{3}-\e3)/18$, where $q=3^{a}\geq 3$. If $\lambda\leq 100$, then $q=3$, and so \cite{a:Braic-2500-nopower,a:Dempwolff2001} implies that $G=X=G_{2}(3)$ and $H\cap X=SL_{3}^{\e}(3):2$ for $\e=\pm$ and $\Dmc$ is of parameters $(378,117,36)$  or $(351,126,45)$ respectively for $\e=+$ or $\e=-$.\smallskip

\noindent (c) $X=E_{6}(q)$ and the Levi factor of $H$ is of type $D_{5}$. Then $v=(q^{8}+q^{4}+1)(q^{9}-1)/(q-1)$ and  $k$ divides $q(q^{4}+1)\lambda$, and so by Lemma~\ref{lem:six}(b), we have that $\lambda v<k^{2}$. Therefore, $(q^{8}+q^{4}+1)(q^{9}-1)<100q^{2}(q-1)(q^{4}+1)^{2}$, which is impossible.


\scriptsize
\begin{longtable}{lp{9.5cm}}
    \caption{The parameters $v$, $b_{1}$ and $b$ and the polynomials $f(q)$, $g(q)$, $h(q)$ for some finite simple exceptional groups}\label{tbl:rem}\\
    Notation & $q=p^{a}$ with $p$ prime, $c_{1}=\gcd(2, q-1)$, $c_{2}^{\e}=\gcd(4, q-\e )/c_{1}$, $c_{3}^{\e}=\gcd(8,q-\e )/c_{1}$, $c_{4}^{\e}=\gcd(3,q-\e1)$, $c_{5}=\gcd(2,p)$, $c_{6}=\gcd(3,q^{2}-1)$,  $c_{7}^{\e}=\gcd(4, q-\e )$, $c_{8}^{\e}=\gcd(4,q^{4}-1)\gcd(5,q-\e1)$, $e_{1}\mid a c_{1}$, $e_{2}\mid a^{2}c_{1}c_{5}$, $e_{3}\mid ac_{5}$, $e_{4}^{\e}\mid 2ac_{4}^{\e}$\\
    \endfirsthead
    \multicolumn{2}{c}%
    {\tablename\  \thetable\ -- Continued} \\
    \noalign{\smallskip}
    \endhead
    \noalign{\smallskip}
    \multicolumn{2}{r}{-- Continued}\\
    \endfoot
    \endlastfoot
    \noalign{\smallskip}\hline\noalign{\smallskip}
    $X=G_{2}(q)$ & $H\cap X=c_{1}\cdot A_{1}(q)^2 : c_{1}$ \\
    \noalign{\smallskip}\hline\noalign{\smallskip}
    $v$ & $q^{4}(q^4+q^2+1)$ \\
    $b_{1}$ & $32a$ \\
    $b$ & $2a$ \\
    $f(q)$ & $1$ \\
    $g(q)$& $q^{2}(q^2-1)^2$  \\
    $q$  & $2, 3,4, 5, 7, 8, 9, 16, 32$ \\
    \noalign{\smallskip}\hline\noalign{\smallskip}
    $X=G_{2}(q)$ & $H\cap X=\,^{2}G_{2}(q)$ with $q=3^{2n+1}\geq 27$ \\
    \noalign{\smallskip}\hline\noalign{\smallskip}
    $v$ & $q^{3}(q^{3}-1)(q+1)$\\
    $b_{1}$ & $2a$ \\
    $b$ & $2a$  \\
    $f(q)$ & $q^{2}-q+1)$ \\
    $g(q)$&  $q^{3}(q^{3}+1)(q-1)$ \\
    $h(q)$& $q^3-3q+5$\\
    $q$  & $3^{a}$ with $a=3, 4, 5, 6, 7, 8$  \\
    \noalign{\smallskip}\hline\noalign{\smallskip}
    $X=E_{6}^{\e}(q)$ & $H\cap X=c_{1}\cdot(A_{1}(q)\times A_{5}^{\e}(q))\cdot c_{1}\cdot c_{4}^{\e}$\\
    \noalign{\smallskip}\hline\noalign{\smallskip}
    $v$ & $c_{4}^{-\e}q^{20}(q^8+q^4+1)(q^6+\e q^3+1)(q^4+1)(q^2+1)$\\
    $b_{1}$ & $1$ \\
    $b$ & $2ac_{4}^{\e}$ \\
    $f^{\e}(q)$ & $c^{\e2}_{4}c_{6}(q^{5}-\e1)(q^{2}-1)(q+\e1)$ \\
    $g^{\e}(q)$& $q^{16}(q^{6}-1)(q^{5}-\e1)(q^{4}-1)(q^{3}-\e1)(q^{2}-1)^{2}$  \\
    $h^{\e}(q)$& $ c_{4}^{\e}(q^6+\e q^5-4q^4-\e 6q^3+2q^2+\e10q+10)$,\\
    $q$ $(c_{4}^{\e}=1)$ & $2^{a}$ with $a\leq 7$, $p^{a}$ with $p=3,5,7$ and $a\leq 2$,
    $p$ with $p=11$, \ldots,$19$\\
    $q$ $(c_{4}^{\e}=3)$ & $2^{a}$ with $a\leq 14$,
    $5^{a}$ with $a\leq 5$, $7^{a}$ with $a\leq 4$, $p^{a}$ with $p=11,13,17$ and $a\leq 3$,
    $p^{a}$ with $p=19,\ldots,61$ and $a\leq 2$, $p$ with $p=67,\ldots,1933$\\
    \noalign{\smallskip}\hline\noalign{\smallskip}
    $X=E_{6}^{\e}(q)$ & $H\cap X=c_{1}^2\cdot (D_{4}(q)\times (\frac{q-\e1}{c_{1}})^2) \cdot c_{1}^2 \cdot S_{3}$ \\
    \noalign{\smallskip}\hline\noalign{\smallskip}
    $v$ & $6^{-1}c_{4}^{-\e}q^{24}(q^{12}-1)(q^{9}-\e1)(q^{8}-1)(q^{5}-\e1)(q^{4}-1)^{-2}(q-\e1)^{-2}$\\
    $b_{1}$ & $12ac_{4}^{\e}$ \\
    $b$ & $12ac_{4}^{\e}$ \\
    $f^{\e}(q)$ ($c_{4}^{\e}=1$) & $(q-\e1)^{6}(q+\e1)^{4}(q^{2}+1)^{2}$ \\
    $f^{\e}(q)$ ($c_{4}^{\e}=3$) & $2^{6}(q-\e1)^{6}$ \\
    $g(q)$ & $q^{12}(q^{6}-1)(q^{4}-1)^{2}(q^{2}-1)(q-\e1)^{2}$ \\
    \noalign{\smallskip}\hline\noalign{\smallskip}
    $X=E_{6}^{\e}(q)$ & $H\cap X=({}^3\!D_{4}(q) \times (q^2+\e q+1))\cdot 3$ with $(\e,q)\neq (-,2)$ \\
    \noalign{\smallskip}\hline\noalign{\smallskip}
    $v$ & $3^{-1}c_{4}^{-\e}q^{24}(q^{4}-1)(q^{5}-\e1)(q^{8}-1)(q^{9}-\e1) (q^{2}+\e q+1)^{-1}$ \\
    $b_{1}$ & $6ac_{4}^{\e}$ \\
    $b$ & $6ac_{4}^{\e}$ \\
    $f^{\e}(q)$ & $(q^8+q^4+1)(q^{4}+q^{2}+1)(q^2+\e q+1)$ \\
    $g^{\e}(q)$ &  $q^{12}(q^8+q^4+1)(q^6-1)(q^2-1)(q^2+\e q+1)$\\
    $q$ & $2$ for $\e=+$ \\
    \noalign{\smallskip}\hline\noalign{\smallskip}
    $X=E_{6}^{\e}(q)$ & $H\cap X=c_{7}^{\e}\cdot(D^{\e}_{5}(q)\times (\frac{q-\e1}{c_{7}^{\e}}))\cdot c_{7}^{\e}$ \\
    \noalign{\smallskip}\hline\noalign{\smallskip}
    $v$ & $c_{4}^{-\e}q^{16}(q^{12}-1)(q^{9}-\e1)(q^{4}-1)^{-1}(q-\e1)^{-1}$ \\
    $b_{1}$ & $1$ \\
    $b$ & $ 2ac_{4}^{\e}$ \\
    $f^{\e}(q)$ & $c_{8}^{\e}(q-\e1)^{2}(q^{4}+1)$ \\
    $g^{\e}(q)$ & $q^{20}(q^{8}-1)(q^{6}-1)(q^{5}-\e1)(q^{4}-1)(q^2-1)(q-\e1)$\\
    $h^{\e}(q)$& $c_{4}^{\e}(q^{20}-\e4q^{19}+5q^{18}-6q^{16}+\e7q^{15}-3q^{14}
    -\e q^{13}-2q^{12}+\e 11q^{11}-15q^{10}+\e4q^{9}+15q^{8}
    -\e24q^{7}+17q^{6}-\e4q^{5}+q^{4}-\e9q^{3}+15q^{2}-\e9q-12)$,\\
    $q$ ($c_{4}^{\e}=1$) & $2^{a}$ with $a\leq 10$, $3^{a}$ with $a\leq 8$,
    $5^{a}$ with $a\leq 4$, $p^{a}$ with $p=7,11$ and $a\leq 3$,
    $p^{a}$ with $p=13,\ldots,47$ and $a\leq 2$, $p$ with $p=53,\ldots,1301$ \\
    $q$ ($c_{4}^{\e}=3$) & $2^{a}$ with $a\leq 12$,
    $5^{a}$ with $a\leq 5$, $p^{a}$ with $p=7,11,13$ and $a\leq 4$,
    $p^{a}$ with $p=17,\ldots,31$ and $a\leq 3$, $p^{a}$ with $p=37, \ldots,151$ and $a\leq 2$, $p$ with $p=157,\ldots,11821$\\
    \noalign{\smallskip}\hline\noalign{\smallskip}
    $X=E_{6}^{\e}(q)$ & $\Soc(H)=C_{4}(q)$ with $q$ odd \\
    \noalign{\smallskip}\hline\noalign{\smallskip}
    $v$ & $c_{4}^{-\e}q^{20}(q^{5}-\e1)(q^{9}-\e1)(q^{12}-1)(q^{4}-1)^{-1}$ \\
    $b_{1}$ & $1$\\
    $b$ & $2ac_{4}^{\e}$ \\
    $f^{\e}(q)$ & $c_{4}^{\e}(q^{2}-\e c_{4}^{\e}-1)$\\
    $g(q)$ &  $q^{16}(q^2-1)(q^4-1)(q^6-1)(q^8-1)$\\
    \noalign{\smallskip}\hline\noalign{\smallskip}
    $X=E_{6}^{\e}(q)$ & $\Soc(H)=F_{4}(q)$ \\
    \noalign{\smallskip}\hline\noalign{\smallskip}
    $v$ & $c_{4}^{-\e}e_{3}^{-1}e_{4}^{\e}q^{12}(q^{9}-\e1)(q^{5}-\e1)$ \\
    $b_{1}$ & $1$ \\
    $b$ & $2ae_{3}c_{4}^{\e}$ \\
    $f(q)$ & $c_{4}^{+}$ \\
    $g(q)$ & $q^{24}(q^{2}-1)(q^{6}-1)(q^{8}-1)(q^{12}-1)$ \\
    \noalign{\smallskip}\hline\noalign{\smallskip}
    $X=E_{7}(q)$ & $H\cap X=c_{1}\cdot(A_{1}(q)\times D_{6}(q))\cdot c_{1}$\\
    \noalign{\smallskip}\hline\noalign{\smallskip}
    $v$ & $q^{32}(q^{18}-1)(q^{14}-1)(q^{6}+1)/(q^{4}-1)(q^{2}-1)$ \\
    $b_{1}$ & $1$\\
    $b$ & $a$ \\
    $f^{\e}(q)$ & $4c_{4}^{\e}\cdot(q^{2}-1)(q^{8}-1)$ \\
    $g(q)$ & $q^{31}(q^{10}-1)(q^{8}-1)(q^{6}-1)^{2}(q^{4}-1)(q^{2}-1)^{2}$ \\
    $h(q)$ & $4q^{15}-16q^{13}+16q^{11}+4q^{9}-4q^{5}-32q^{3}+20q$\\
    $q$ &
    $2^{a}$ with $a\leq 8$, $3^{a}$ with $a\leq 3$,
    $p^{a}$ with $p=5,7$ and $a\leq 2$,
    $p$ with $p=11$, \ldots,$31$\\
    \noalign{\smallskip}\hline\noalign{\smallskip}
    $X=E_{7}(q)$ & $H\cap X=c_{2}^{\e}\cdot (A_{7}^{\e}(q) \cdot c_{3}^{\e} \cdot  (2\times (2/c_{2}^{\e}))$ with $\e=\pm$ and $q>2$\\
    \noalign{\smallskip}\hline\noalign{\smallskip}
    $v$ & $q^{35}(q^{18}-1)(q^{12}-1)(q^{7}+\e1)(q^{5}+\e1)/4\cdot (q^{4}-1)(q^{3}-\e1)$ \\
    $b_{1}$ & $1$ \\
    $b$ & $4a$\\
    $f(q)$ & $(q-\e1)^{3}(q+\e2)$ \\
    $g(q)$ &  $q^{28}(q^{8}-1)(q^{7}-\e1)(q^{6}-1)(q^{5}-\e1)(q^{4}-1)(q^{3}-\e1)(q^{2}-1)$\\
    \noalign{\smallskip}\hline\noalign{\smallskip}
    $X=E_{7}(q)$ & $H\cap X=c_{4}^{\e}\cdot (E_{6}^{\e}(q)\times (q-\e/c_{4}^{\e}))\cdot c_{4}\cdot 2$ with $\e=\pm$ and $(q,\e)\neq (2,-)$ \\
    \noalign{\smallskip}\hline\noalign{\smallskip}
    $v$ & $q^{27}(q^5+\e1)(q^9+\e1)(q^{14}-1)/[2c_{1}\cdot (q-\e)]$ \\
    $b_{1}$ & $1$ \\
    $b$ & $2a$\\
    $f^{\e}(q)$ & $c_{4}^{\e}\cdot (q^{9}-\e1)$\\
    $g^{\e}(q)$ & $2q^{36}(q-\e1)(q^2-1)(q^5-\e1)(q^6-1)(q^8-1)(q^9-\e1)(q^{12}-1)$ \\
    $h^{\e}(q)$&
    $c_{1}\cdot (4q^{34}-\e8q^{33}+\e8q^{31}-4q^{30}-\e8q^{29}+12q^{28}+\e
    8q^{27}-20q^{26}-\e4q^{25}+36q^{24}-\e16q^{23}-40
    q^{22}+\e44q^{21}+24q^{20}-\e76q^{19}+4q^{18}+\e92q^{17}
    -52q^{16}-\e84q^{15}+112q^{14}+\e56q^{13}
    -168q^{12}+\e24q^{11}+192q^{10}-\e124q^{9}-164q^{8}
    +\e228q^{7}+72q^{6}-\e312q^{5}+64q^{4}+\e320q^{3}
    -236q^{2}-\e240q+384)$,\\%
    $q$ for $\e=+$ & $2^{a}$ with $a\leq 12$, $3^{a}$ with $a\leq 7$,
    $5^{a}$ with $a\leq 4$, $7^{a}$ with $a\leq 4$,
    $13^{a}$ with $a\leq 3$,
    $p^{a}$ with $p=11$, $17$, \ldots,$43$ and $a\leq 2$,
    $p$ with $p=47$, \ldots,$997$\\
    $q$ for $\e=-$ &  $2^{a}$ with $a\leq 11$,
    $3^{a}$ with $a\leq 7$,
    $5^{a}$ with $a\leq 5$,
    $7^{a}$ with $a\leq 3$, $11^{a}$ with $a\leq 3$,
    $p^{a}$ with $p=13$,\ldots, $23$ and  $a\leq 2$,
    $p$ with $p=29$, \ldots, $1013$ \\
    \noalign{\smallskip}\hline\noalign{\smallskip}
    $X=E_{7}(q)$ & $\Soc(H)=A_{1}(q)\times F_{4}(q)$ with $q>3$ \\
    \noalign{\smallskip}\hline\noalign{\smallskip}
    $v$ & $e_{1}q^{36}(q^4+1)(q^2+1)(q^{12}-1)/(c_{1}e_{2})$ \\
    $b_{1}$ & $2a^{2}$ \\
    $b$ & $2a^{2}$ \\
    $f(q)$ &  $c_{1}^{3}e_{2}^{3}(q^2+1)^2(q^4+1)(q^4-q^2+1)$\\
    $g(q)$ & $q^{25}(q^2-1)^2(q^6-1)(q^8-1)(q^{12}-1)$ \\
    \noalign{\smallskip}\hline\noalign{\smallskip}
    $X=E_{8}(q)$ & $H\cap X=c_{1}\cdot D_{8}(q)\cdot c_{1}$ \\
    \noalign{\smallskip}\hline\noalign{\smallskip}
    $v$ & $q^{64}(q^{30}-1)(q^{24}-1)(q^{20}-1)(q^{18}-1)/(q^{10}-1)(q^{8}-1)(q^{6}-1)(q^{4}-1)$\\
    $b_{1}$ & $a$\\
    $b$ & $a$\\
    $f(q)$ & $(q^{2}-1)^{8}(q^{2}+1)^{4}(q^{4}+1)^{2}(q^{14}-1)$\\
    $g(q)$ & $q^{56}(q^{8}-1)\prod_{i=1}^{7}(q^{2i}-1)$\\
    $h(q)$ & $q^{36}-4q^{34}+2q^{32}+10q^{30}-14q^{28}+6q^{26}+2q^{24}-30q^{22}+42q^{20}+ 3q^{18}-28q^{16}+38q^{14}-44q^{12}-14q^{10}+41q^{8}-33q^{6}+16q^{4}+30q^{2}-3$\\
    \noalign{\smallskip}\hline\noalign{\smallskip}
    $X=E_{8}(q)$ & $H\cap X=c_{1}\cdot (A_{1}(q)\times E_{7}(q) )\cdot c_{1}$ \\
    \noalign{\smallskip}\hline\noalign{\smallskip}
    $v$ & $q^{56}(q^{30}-1)(q^{24}-1)(q^{20}-1)/(q^{10}-1)(q^{6}-1)(q^{2}-1)$\\
    $b_{1}$ & $1$ \\
    $b$ & $a$ \\
    $f(q)$ & $c_{1}^{2}(q^{14}-1)(q^{2}-1)$\\
    $g(q)$ & $q^{64}(q^{18}-1)(q^{14}-1)(q^{12}-1)(q^{10}-1)(q^{8}-1)(q^{6}-1)(q^{2}-1)^{2}$\\
    $h(q)$ & $c_{1}^{2}(q^{40}-4q^{38}+6q^{36}-6q^{34}+8q^{32}-10q^{30}+10q^{28}-10q^{26}+11q^{24}-15q^{22}+18q^{20}-20q^{18}+22q^{16}-22q^{14}+23q^{12}-19q^{10}+16q^{8}-17q^{6}+14q^{4}-12q^{2}+8)$, \\
    \noalign{\smallskip}\hline\noalign{\smallskip}
    $X=E_{8}(q)$ & $H\cap X=c_{4}^{\e}\cdot (A_{2}^{\e}(q)\times E_{6}^{\e}(q) )\cdot c_{4}^{\e}\cdot 2$\\
    \noalign{\smallskip}\hline\noalign{\smallskip}
    $v$ & $2^{-1}q^{81}(q^{30}-1)(q^{24}-1)(q^{20}-1)(q^{18}-1)(q^{14}-1)/(q^{9}-\e)(q^{6}-1)(q^{5}-\e)(q^{3}-\e)(q^{2}-1)$\\
    $b_{1}$ & $2a$ \\
    $b$ & $2a$ \\
    $f(q)$ & $(q^{12}-1)(q^{9}-\e)(q^{8}-1)(q^{6}-1)(q^{5}-\e)(q^{3}-\e)(q^{2}-1)^{2}$\\
    $g(q)$ & $q^{39}(q^{12}-1)(q^{9}-\e)(q^{8}-1)(q^{6}-1)(q^{5}-\e)(q^{3}-\e)(q^{2}-1)^{2}$\\
    \noalign{\smallskip}\hline\noalign{\smallskip}
\end{longtable}
\normalsize

\section*{Acknowledgments}

The authors would like to thank anonymous referees for providing us helpful and constructive comments and suggestions. The authors are also grateful to Martin Liebeck for Theorem~\ref{thm:martin} and Proposition~\ref{prop:g2sl3}. The first and third author are grateful to Alice Devillers  Cheryl E. Praeger and John Bamberg for supporting their visit to UWA (The University of Western Australia) during March-April 2017 and July-September 2019. The first author would like to thank IPM (Institute for Research in Fundamental Sciences). Part of this investigation was supported by a grant from IPM (N.94200068).



\end{document}